\theoremstyle{change}
\newcommand{\Hl}{{\mathbb H}}
\newcommand{\C}{{\mathbb C}}
\newcommand{\R}{{\mathbb R}}
\newcommand{\Q}{{\mathbb Q}}
\newcommand{\Z}{{\mathbb Z}}
\newcommand{\g}{{\frak g}}
\newcommand{\p}{{\mathfrak p}}
\newcommand{\n}{{\mathfrak n}}
\newcommand{\la}{{\mathfrak a}}
\newcommand{\mk}{{\mathfrak k}}
\newcommand{\mO}{{\cal O}}
\newcommand{\A}{{\mathbb A}}
\newcommand{\real}{\operatorname{Re}}
\newcommand{\tr}{\operatorname{tr}}
\newcommand{\IM}{\operatorname{Im}}
\newcommand{\cG}{{\cal G}}
\newcommand{\cM}{{\cal M}}
\newcommand{\ch}{\operatorname{char}}
\newcommand{\Hp}{{\cal H}_p}
\newcommand{\GL}{{\rm GL}}
\newcommand{\SL}{{\rm SL}}
\newcommand{\Symp}{{\rm Sp}}
\newcommand{\GSp}{{\rm GSp}}
\newcommand{\PGSp}{{\rm PGSp}}
\newcommand{\SSp}{{\rm Sp}}
\newcommand{\mat}[4]{{\setlength{\arraycolsep}{0.5mm}\left[
\begin{array}{cc}#1&#2\\#3&#4\end{array}\right]}}
\newcommand{\qed}{\hspace*{\fill}\rule{1ex}{1ex}}
\numberwithin{equation}{section}
\theoremstyle{plain}
 \newtheorem{thm}{Theorem}[section]
 \newtheorem{prop}[thm]{Proposition}
 \newtheorem{lem}[thm]{Lemma}
 \newtheorem{conj}[thm]{Conjecture}
\theoremstyle{definition}
 \newtheorem{defn}[thm]{Definition}
 \newtheorem{rem}[thm]{Remark}
 \newenvironment{proof}{\vspace{1ex}\noindent{\it Proof.}\hspace{0.1em}}
	{\hfill\qed\vspace{2ex}}
\begin{document}
\title{Lifting to $\GL(2)$ over a division quaternion algebra and an explicit construction of CAP representations}
\author{Masanori Muto, Hiro-aki Narita\footnote{Partly supported by Grant-in-Aid for Scientific Research (C) 24540025, Japan Society for the Promotion of Science.}  and Ameya Pitale\footnote{Partly supported by National Science Foundation grant DMS-1100541.}}
\date{}
\maketitle
\begin{abstract}
The aim of this paper is to carry out an explicit construction of CAP representations of $\GL(2)$ over a division quaternion algebra with discriminant two. We first construct cusp forms on such group explicitly by lifting from Maass cusp forms for the congruence subgroup $\Gamma_0(2)$. We show that this lifting is non-zero and Hecke-equivariant.  This allows us to determine each local component of such a cuspidal representation. We then know that our cuspidal representations provide examples of CAP representations, and in fact, counterexamples of the Generalized Ramanujan conjecture.
\end{abstract}
\section{Introduction}
One of the fundamental problems in the theory of automorphic forms or representations is to study the Ramanujan conjecture. To review it let $\cG$ be a reductive algebraic group over a number field $F$ and let $\A:=\otimes'_{v\le\infty}F_v$ be the ring of adeles for $F$, where $F_v$ denotes the local field at a place $v$. 
\begin{conj}
Let $\pi=\otimes'_{v\le\infty}\pi_v$ be an irreducible cuspidal representation of $\cG(\A)$, where $\pi_v$ denotes the local component of $\pi$ at a place $v$. 
Then $\pi_v$ is tempered for every $v\le\infty$.
\end{conj}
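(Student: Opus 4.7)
The conjecture as stated is in fact known to be \emph{false} for a general reductive $\cG$, and the central aim of the paper is precisely to produce new counterexamples in the quaternionic $\GL(2)$ setting. Any genuine proof plan must therefore either restrict the ambient group (most naturally to $\GL(n)$) or refine the statement (to cuspidal representations that are not CAP). I will sketch the strategy in each regime and indicate where the attempt at full generality breaks down.

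For $\cG = \GL(n)$ the standard route is via Galois representations. One would attach to a cuspidal $\pi$ on $\GL(n,\A)$ a compatible system of $\ell$-adic Galois representations $\{\rho_\ell\}_\ell$ whose Frobenius eigenvalues at unramified primes match the Satake parameters of $\pi_v$ under the local Langlands correspondence. Purity of the underlying motive---accessible through Deligne's proof of the Weil conjectures whenever $\rho_\ell$ can be realized in the cohomology of an algebraic variety---bounds these Frobenius eigenvalues on the unit circle and yields temperedness at finite places; archimedean temperedness would follow from Hodge-theoretic estimates on the associated de Rham data. This programme succeeds for holomorphic modular forms (Deligne), Hilbert modular forms (Blasius, Harris--Taylor), and more generally for cuspidal $\pi$ whose Galois representations can be cut from Shimura varieties, but it remains open for generic Maass forms.

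For a general reductive $\cG$ the natural continuation is Langlands functoriality: transfer $\pi$ to an automorphic representation on some $\GL(n,\A)$ through an $L$-homomorphism ${}^L\cG \to {}^L\GL(n)$, and reduce to the previous case. This is exactly where the plan fails, and the failure is the CAP phenomenon itself. When $\pi$ is CAP with respect to a cuspidal $\sigma$ on a proper Levi $M$, its conjectural functorial lift to $\GL(n)$ is not cuspidal but an isobaric sum built out of $\sigma$; temperedness of the isobaric constituents cannot be pulled back to temperedness of $\pi_v$ at every place, and certain $\pi_v$ are in fact forced to be non-tempered by the shape of the associated Arthur parameter. This is not a technical gap but a genuine obstruction, and it is precisely the door through which the present paper walks: the lifts from $\Gamma_0(2)$-Maass forms to $\GL(2)$ over the division quaternion algebra will be shown, by identifying their local components via Hecke equivariance, to be CAP and hence counterexamples to the conjecture as stated. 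A proof plan for the conjecture in its uncorrected form is therefore doomed; the most one can hope to prove is the refined version restricted to the non-CAP spectrum.
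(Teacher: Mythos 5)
You have correctly recognized that this statement is a \emph{conjecture} which the paper never proves; on the contrary, the paper's main results (the CAP theorem and the non-temperedness of $\pi_p$ at finite primes for the lifts $F_f$) are constructed precisely as counterexamples to it in the generality stated. Your diagnosis --- that the conjecture can only hold after restricting the group (to $\GL(n)$, where Jacquet--Shalika exclude the CAP phenomenon) or restricting to non-CAP cuspidal representations --- is exactly the framing the introduction of the paper adopts, so there is nothing to correct.
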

Nowadays it is widely known that counterexamples of this conjecture are found.  A well-known example is given by the Saito-Kurokawa lifting to holomorphic Siegel cusp forms of degree two~(cf.~\cite{Ku}). As another well-known example there is the work by Howe and Piatetski-Shapiro \cite{HP}, which gives a counterexample by a theta lifting from ${\rm O}(2)$ to $\SSp(2)$. 
In fact, it is expected that liftings from automorphic forms on a smaller group provide such counterexamples. 
On the other hand, let us recall that there is the notion of CAP representation (Cuspidal representation Associated to a Parabolic subgroup), which has been originally introduced by Piatetski-Shapiro \cite{Ps}. This is a representation theoretic approach to find counterexamples of the Ramanujan conjecture. 

We now note that the Ramanujan conjecture for the general linear group $\GL(n)$ is strongly believed. In fact, by Jacquet and Shalika \cite{JS}, it can be shown that the CAP phenomenon never occurs for $\GL(n)$. More generally, it is expected that the conjecture would hold for generic cuspidal representations of quasi-split reductive groups. 
In view of the Langlands functoriality principle for quasi-split groups and their inner forms, the Ramanujan conjecture for the inner forms are quite natural and interesting to study. 
We can thereby say that CAP representations of the cases of the inner forms are significant to discuss.  
To define the notion of CAP representations for the inner forms we follow Gan \cite{G1} and Pitale \cite{P}~(cf.~Definition \ref{CAP-def}). 
Let us note that the Saito-Kurokawa lifting deals with the case of the split symplectic group $\GSp(4)$ of degree two. The case of the non-split inner form $\GSp(1,1)\simeq {\rm GSpin}(1,4)$ of $\GSp(4)$ is considered by \cite{G1} and \cite{P} etc. 

In this paper we take up the case of $\GL_2(B)$ over the division quaternion algebra $B$ with discriminant two, where note that $\GL_2(B)$ is an inner form of the split group $\GL(4)$. 
 Our results provide an explicit construction of cusp forms on $\GL_2(B)$ with lifts from Maass cusp forms for the congruence subgroup $\Gamma_0(2)$, and show that the cuspidal automorphic representations generated by such lifts are CAP representations of $\GL_2(B)$. The method of our construction of the lifting is what follows \cite{P}, which deals with an explicit construction of lifting to $\GSp(1,1)$. In fact, note that the cusp forms constructed by our lifting are viewed as Maass cusp forms on the 5-dimensional real  hyperbolic space, while the lifting considered in \cite{P} provides Maass cusp forms on the 4-dimensional real hyperbolic space. As in \cite{P}, to prove the automorphy of our lifts, we use the converse theorem \cite{Ma} by Maass, which is useful for real hyperbolic spaces of arbitrary dimension. 

We explain the explicit construction of our lifting. Let $f$ be a Maass cusp form for $\Gamma_0(2)$ which is an eigenfunction of the Atkin-Lehner involution. 
Let $\{c(n)\}_{n\in\Z\setminus\{0\}}$ be Fourier coefficients of $f$. From $c(n)$'s we define numbers $A(\beta)$'s~(cf.~(\ref{Abeta-defn})) for $\beta\in B\setminus\{0\}$ in order to construct our lifting  to a cusp form $F_f$ on $\GL_2(\Hl)$ in the non-adelic setting. Actually $A(\beta)$'s are nothing but Fourier coefficients of $F_f$. 
The statement of our first result is as follows:~(cf.~Theorem \ref{lift-thm})
\begin{thm}
Let $f$ be a non-zero Mass cusp form which is an eigenfunction of the Atkin-Lehner involution.  Then $F_f$ is a non-zero cusp form on $\GL_2(\Hl)$.
\end{thm}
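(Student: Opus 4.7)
The plan is to verify, in order, that $F_f$ defined through its Fourier expansion with coefficients $A(\beta)$ is (i) absolutely convergent and smooth, (ii) an eigenfunction of the Laplace--Beltrami operator on the $5$-dimensional real hyperbolic space $\Hl^5 \simeq \GL_2(\Hl)/(K\cdot Z)$, (iii) invariant under the relevant arithmetic subgroup $\Gamma \subset \GL_2(\Hl)$, (iv) cuspidal, and (v) not identically zero. Step (i) is routine: the $c(n)$'s satisfy polynomial bounds (any Maass Hecke eigenform does), these transfer to polynomial bounds on $A(\beta)$ in the reduced norm $N(\beta)$ through the definition (\ref{Abeta-defn}), and each Fourier term carries a $K$-Bessel factor that decays exponentially in $N(\beta)$, giving uniform absolute convergence on compacta. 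A direct computation shows that each such term is a Laplace eigenfunction with eigenvalue determined by the spectral parameter of $f$, so summing gives (ii).

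The crux of the argument is step (iii), and this is where I expect the main obstacle to lie. Following the strategy used in \cite{P} for the $\GSp(1,1)$ case, I would invoke the Maass converse theorem \cite{Ma}, which is tailor-made for real hyperbolic spaces of arbitrary dimension and characterizes automorphic Maass forms in terms of analytic properties of an attached family of Dirichlet series. Concretely, the series $\sum_{\beta} A(\beta) N(\beta)^{-s}$, together with its twists by the additive characters relevant to $\Gamma$, must admit meromorphic continuation to $\mathbb{C}$, be bounded on vertical strips, and satisfy a functional equation of the correct shape and sign. The factorization of $A(\beta)$ through the $c(n)$'s reduces the analytic properties of these Dirichlet series to those of the Hecke $L$-function of $f$ and its twists, which are standard; the Atkin--Lehner eigenvalue of $f$ enters precisely to produce the correct sign in the resulting functional equation, and this is what forces the eigenform hypothesis in the statement. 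Verifying that the reduction goes through with enough twists to invoke the converse theorem, and carefully matching local factors, is the most intricate portion of the argument.

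Granted (iii), cuspidality (iv) is immediate: the Fourier expansion is supported on non-zero $\beta$, so the constant term along the standard parabolic subgroup vanishes, and conjugating to the remaining (finitely many) cusps yields the same conclusion. For non-vanishing (v), choose any $n$ with $c(n) \neq 0$ and select a quaternion $\beta \in B$ whose reduced norm is $n$; by the definition of $A(\beta)$, this produces a non-zero Fourier coefficient, so $F_f \not\equiv 0$ and in particular is a genuine cusp form on $\GL_2(\Hl)$.
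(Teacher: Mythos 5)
Your overall architecture (convergence, eigenfunction property, automorphy via the Maass converse theorem, cuspidality from the Fourier expansion, then non-vanishing) matches the paper's, but two of your steps contain genuine gaps. The most serious is step (v). The coefficients $A(\beta)$ in \eqref{Abeta-defn} are built \emph{only} from the Fourier coefficients $c(-m)$ of $f$ with $m>0$, i.e.\ from the negative-index coefficients. So ``choose any $n$ with $c(n)\neq 0$ and select $\beta$ of norm $n$'' proves nothing: you must first show that some $c(-N)$ with $N>0$ is non-zero, and this is not automatic --- a priori a Maass cusp form could have all negative-index coefficients vanishing without being zero. The paper devotes Lemma \ref{Maass-form-non-van-coeff} to exactly this point, arguing by contradiction that if $c(n)=0$ for all $n<0$ then the even and odd parts $f_1,f_2$ of $f$ would have equal $L$-functions satisfying functional equations whose gamma factors differ by a shift, which is impossible unless $f=0$. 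Moreover, even once $c(-N)\neq 0$ is known, $A(\beta)$ is a \emph{sum} of several values $c(-|\beta|^2/(2^{t+1}n^2))$ which could cancel; the paper avoids this by taking $N_0$ \emph{minimal} with $c(-N_0)\neq 0$ and $\beta=\varpi_2\beta_0$ with $|\beta_0|^2=N_0$, so that every term of the sum except $c(-N_0)$ vanishes by minimality and $A(\beta)=\sqrt{2N_0}\,c(-N_0)\neq 0$. Your argument addresses neither issue.

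In step (iii) you correctly identify the Maass converse theorem as the engine, but your description of the analytic input is off. The converse theorem for the $5$-dimensional hyperbolic space requires the Dirichlet series $\xi(s,P_{l,\nu})=\pi^{-2s}\Gamma(s+\tfrac{\sqrt{-1}r}{2})\Gamma(s-\tfrac{\sqrt{-1}r}{2})\sum_\beta A(\beta)P_{l,\nu}(\beta)|\beta|^{-2s}$ twisted by harmonic polynomials $P_{l,\nu}$ of \emph{every} degree $l$, not twists by additive characters. For $l\geq 2$ these series do not reduce to the Hecke $L$-function of $f$ and its twists: they unfold to a Rankin--Selberg convolution $\sum_m c(-m)b(2m)m^{-s-l/2}$ of $f$ against the weight-$(l+2)$ theta series $\Theta_{l,\nu}$ attached to the lattice $S$ and the polynomial $P_{l,\nu}$, whose continuation and functional equation are \emph{not} standard and must be established via the integral $I(s)$ against an Eisenstein series for $\Gamma_0(2)$ (Section \ref{zeta-int-sec} of the paper, in particular Theorem \ref{zeta-int-fnal-eqn-thm} and Proposition \ref{main-prop}). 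The Atkin--Lehner eigenvalue enters through the transformation of $\Theta_{l,\nu}$ under $z\mapsto -1/(2z)$ and the functional equation of the Eisenstein series, as you anticipated, but the reduction you propose would not supply the needed analytic properties. Finally, note that the converse theorem only yields invariance under $\Gamma_T$; invariance under all of $\GL_2(\mO)$ requires the additional direct check on the generators $\left[\begin{smallmatrix}u&0\\0&1\end{smallmatrix}\right]$, $u\in\mO^\times$, together with Proposition \ref{group-gen-prop}, and cuspidality uses that there is only one cusp (Lemma \ref{cusp-lem}), so no conjugation to other cusps is needed.
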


Another result is that the cuspidal representations generated by $F_f$'s are CAP representations of $\GL_2(B)$ and provide counterexamples of the Generalized Ramanujan conjecture. To be more precise, assume that $f$ is a Hecke eigenform. We can regard $F_f$ as a cusp form on the adele group $\cG(\A)$ with $\cG=\GL_2(B)$. We can show that $F_f$ is a Hecke eigenform~(cf.~Section \ref{Hecke-action}). Then the strong multiplicity one theorem proved by Badulescu and Renard \cite{Bad},~\cite{Bad-R} implies that $F_f$ generates an irreducible cuspidal representation $\pi:=\otimes'_{p\le\infty}\pi_p$ of $\cG(\A)$. 
By our detailed study on Hecke eigenvalues of $F_f$ we can determine local representations $\pi_p$ for every $p<\infty$. We can also determine $\pi_p$ explicitly at $p=\infty$ by the calculation of the eigenvalue for the Casimir operator. For this we note that every $\pi_p$ is unramified (at an odd prime) or spherical (at $p=2$ or $\infty$). We can show that $\pi_p$~(respectively $\pi_{\infty}$) is non-tempered at every odd prime $p$~(respectively tempered at $p=\infty$). 
If we further assume that $f$ is a new form, we can also show the non-temperedness of $\pi_p$ at $p=2$.  These lead to our another result as follows~(cf.~Theorem \ref{CAP-thm},~Theorem \ref{Counter-eg-RC}):
\begin{thm}
(1)~Let $f$ be a non-zero Hecke eigen  cusp form and $F_f$ be the lift. Let $\sigma_f$ and $\pi_F$ be  irreducible cuspidal representations generated by $f$ and $F=F_f$ respectively. Then $\pi_F$ is nearly equivalent to an irreducible component of ${\rm Ind}_{P_2(\A)}^{\GL_4(\A)}(|{\rm det}|^{-1/2}\sigma \times |{\rm det}|^{1/2}\sigma)$. Here $P_2$ is the standard parabolic subgroup of $\GL_4$ with Levi subgroup $\GL_2 \times \GL_2$. Namely $\pi_F$ is a CAP representation.\\
(2)~The cuspidal representations $\pi_F$'s are counterexamples of the Ramanujan conjecture.
\end{thm}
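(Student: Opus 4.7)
The plan is to pin down each local component $\pi_p$ via Hecke eigenvalues (at finite primes) and the Casimir eigenvalue (at infinity), and then match these local components with those of the global induced representation on $\GL_4(\A)$.

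First, I would use the Hecke eigenvalue computation of Section \ref{Hecke-action} to extract the Satake parameters of $\pi_p$ at every odd prime $p$. Since $B$ has discriminant $2$, it is split at every odd prime, so $\GL_2(B_p) \simeq \GL_4(\Q_p)$ and $\pi_p$ is an unramified representation of $\GL_4(\Q_p)$. Because the Fourier coefficients $A(\beta)$ of $F_f$ are built from the coefficients $c(\Norm(\beta))$ of $f$, the four Satake parameters of $\pi_p$ should come out as $\{p^{1/2}\alpha_p,\, p^{1/2}\alpha_p^{-1},\, p^{-1/2}\alpha_p,\, p^{-1/2}\alpha_p^{-1}\}$, where $\{\alpha_p, \alpha_p^{-1}\}$ are the Satake parameters of $\sigma_p$ (the central character of $\sigma_f$ is trivial since $f$ is a Maass form on $\GL_2$, hence the two parameters are inverse). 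These four numbers coincide exactly with the Satake parameters of the unramified constituent of ${\rm Ind}_{P_2(\Q_p)}^{\GL_4(\Q_p)}(|\det|^{-1/2}\sigma_p \otimes |\det|^{1/2}\sigma_p)$, giving local equivalence at every odd prime.

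For completeness one should also address the two ramified places of $B$. At $p = \infty$, $\GL_2(\Hl)$ is the non-split inner form of $\GL_4(\R)$; the Casimir eigenvalue of $F_f$ on five-dimensional real hyperbolic space is an explicit function of the Laplace eigenvalue of $f$, and this matches the infinitesimal character of the relevant constituent of the induced representation under the archimedean Jacquet--Langlands correspondence. At $p = 2$, where $B_2$ is the ramified local quaternion algebra, the newform hypothesis makes $\sigma_2$ a (twisted) Steinberg, and $\pi_2$ must be identified as the image of the corresponding non-tempered constituent under the local Jacquet--Langlands correspondence of Badulescu--Renard. The $p=2$ step is what I expect to be the main obstacle, since it requires explicit knowledge of the local JL correspondence outside the tempered/generic regime, together with a careful analysis of the local structure of $F_f$ at the ramified prime.

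With the local matching in place, part (1) follows at once: $\pi_F$ is locally equivalent at every odd prime (and at $p=2,\infty$ after applying JL) to the same global automorphic representation, hence nearly equivalent in the sense of Definition \ref{CAP-def} to an irreducible component of the global induced representation, so $\pi_F$ is CAP with respect to $P_2$. Part (2) is read off from the same Satake data: at every odd prime $p$ the parameters of $\pi_p$ include $p^{1/2}\alpha_p^{\pm 1}$ of absolute value $p^{1/2} \neq 1$ (regardless of $|\alpha_p|$, the four parameters cannot all lie on the unit circle), so $\pi_p$ is non-tempered, and similarly at $p=2$ under the newform assumption via the local identification above. Either non-temperedness contradicts the Ramanujan conjecture, so $\pi_F$ is a counterexample.
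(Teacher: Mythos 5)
Your proposal follows essentially the same route as the paper for everything that the statement actually requires. Part (1): the paper's proof of Theorem \ref{CAP-thm} also works only at the odd primes, where Proposition \ref{local-rep-p-odd-prop} identifies the Satake parameters of $\pi_p$ as $p^{\pm 1/2}\alpha_p^{\pm 1}$ with $\alpha_p+\alpha_p^{-1}=\lambda_p$, exactly as you predict, and then exhibits the explicit isomorphism ${\rm Ind}_{P_2(\Q_p)}^{\GL_4(\Q_p)}(|\det|^{-1/2}\sigma_p\times|\det|^{1/2}\sigma_p)\simeq I(\chi_p)$; since Definition \ref{CAP-def} only asks for agreement at almost all places, nothing at $p=2$ or $p=\infty$ is needed, as you correctly note. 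Part (2) at odd primes is also the paper's argument (the four parameters cannot all be unitary because $|p^{1/2}\alpha_p|\cdot|p^{1/2}\alpha_p^{-1}|=p$). The one place you genuinely diverge is $p=2$: you propose to identify $\pi_2$ via the local Jacquet--Langlands correspondence of Badulescu--Renard and flag this as the main obstacle, whereas the paper avoids local JL entirely --- it pins down $\pi_2$ directly from the explicit Hecke eigenvalue $-3\sqrt{2}\epsilon$ (Propositions \ref{p=2-hecke-equiv-prop} and \ref{local-rep-p-2-prop}) and then proves non-temperedness by computing the action of $K_2\,{\rm diag}(\varpi_2^n,1)\,K_2$ on the spherical vector, namely the eigenvalue $(-\epsilon)^n(2^{3n/2}+2^{n/2})$, and showing the resulting $L^{2+\delta}$ integral of the matrix coefficient diverges (Theorem \ref{Counter-eg-RC}; Remark \ref{local-rep-rem} notes Tadi{\'c}'s classification or Casselman's criterion as alternatives). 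Your JL route is plausible but unexecuted; the point to internalize is that it is also unnecessary for the statement as quoted, since near equivalence needs only the odd primes and a single non-tempered local component already falsifies Ramanujan, so the $p=2$ analysis is an extra refinement rather than an obstacle to the theorem.
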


Let $\pi'$ be the unique irreducible quotient of ${\rm Ind}_{P_2(\A)}^{\GL_4(\A)}(|{\rm det}|^{-1/2}\sigma \times |{\rm det}|^{1/2}\sigma)$. This is denoted by ${\rm MW}(\sigma, 2)$ in Section 18 of \cite{Bad-R}, which is a non-cuspidal, discrete series representation of $\GL_4(\A)$.  Since $\sigma$ is not the image of a cuspidal representation of $B_{\A}^\times$ under the Jacquet-Langlands correspondence, $\pi'$ is $B$-compatible according to Proposition 18.2, part (a) of \cite{Bad-R}. Hence there exists a discrete series representation $\pi$ of $\GL_2(B_{\A})$ which maps to $\pi'$ under the Jacquet-Langlands correspondence. Also, from Proposition 18.2, part (b) of \cite{Bad-R}, the representation $\pi$ has to be cuspidal. By the strong multiplicity one theorem for $\GL_2(B)$, the representation $\pi$ has to be exactly the same as $\pi_F$ obtained from the classical construction. The novelty of our method is that we obtain explicit formula for the lift in terms of Fourier expansions which are valid for non-Hecke eigenforms as well. In addition, the classical method immediately shows that the lifting is a linear non-zero map. 

Let us remark that Grobner \cite{G2} has also obtained examples of CAP representations for $\GL_2(B)$ using the results of \cite{Bad-R}. 
The example by Grobner \cite{G2} has a non-tempered local component at the archimedean place, while our cuspidal representation $\pi$ has a tempered local component at the place as is remarked above. 

The outline of this paper is as follows. In Section 2 we first introduce basic notation of algebraic groups and Lie groups. Then we next introduce automorphic forms in our concern. In Section 3 we study a zeta-integral attached to a Maass cusp form $f$, which is necessary to use the converse theorem by Maass later. Then the explicit construction of cusp forms on $\GL_2(\Hl)$ is given by lifts $F_f$'s in Section 4. In Section  5 we view $F_f$ as a cusp form on the adele group $\cG(\A)$ and prove that $F_f$ is a Hecke eigenform at every finite place. Then, in Section 6, we determine local components $\pi_p$ of the cuspidal representation $\pi$ generated by $F_f$ for all places $p\le\infty$. We thus see that $\pi$ is a CAP representation and provides a counterexample of the Ramanujan conjecture.
\subsection*{Acknowledgement}
From Masao Tsuzuki we have known Weyl's law of new Maass cusp forms for $\Gamma_0(2)$, which leads to the existence of a non-zero cuspidal representation $\pi_F=\otimes'_{p\le\infty}\pi_p$ with non-tempered local components $\pi_p$ at every $p<\infty$. Marko Tadi{\'c} kindly made his remark on the proof of the non-temperedness of $\pi_2$. 
These are summarized as Remark \ref{local-rep-rem}. Our profound gratitude is due to them. 
We would like to thank A. Raghuram for informing us of the paper \cite{G2} by Grobner. We would also like to thank Abhishek Saha for several discussions leading to the proof of the non-vanishing of the lifting.
\section{Basic notations}
\subsection{Algebraic group, real Lie groups and the 5-dimensional hyperbolic space}\label{gps-hypsp}
Let $B$ be the definite quaternion algebra over $\Q$ with discriminant $d_B=2$. The algebra $B$ is given by $B=\Q+\Q i+\Q j+\Q k$ with a basis $\{1,~i,~j,~k\}$ characterized by the conditions
\[
i^2=j^2=k^2=-1,~ij=-ji=k.
\]
Let $\cG$ be the $\Q$-algebraic group  defined by its group of $\Q$-rational points
\[
\cG(\Q)=\GL_2(B).
\]
Here $\GL_2(B)$ is the general linear group over $B$, which consists of elements in $M_2(B)$ whose reduced norms are non-zero. 
Let $\Hl=B\otimes_{\Q}\R$, which is nothing but the Hamilton quaternion algebra $\R+\R i+\R j+\R k$.  
Let $\Hl\ni x\mapsto\bar{x}\in\Hl$ denote the main involution of $\Hl$, and $\tr(x)=x+\bar{x}$ and $\nu(x):=x\bar{x}$ be the reduced trace and the reduced norm of $x\in\Hl$ respectively. In what follows, we often use the notation $|\beta|:=\sqrt{\nu(\beta)}$ for $\beta\in\Hl$. We put $\Hl^-:=\{x\in\Hl\mid\tr(x)=0\}$ to be the set of pure quaternions, and $\Hl^1:=\{x\in\Hl\mid\nu(x)=1\}$.

Denote by $G:=\GL_2(\Hl)$ the general linear group of degree two with coefficients in the Hamilton quaternion algebra $\Hl$. The Lie group $G$ admits an Iwasawa decomposition 
\[
G=Z^+NAK,
\]
where
\begin{align}\label{Iwasawa-decomp}
Z^+&:=\left\{\left.
\begin{bmatrix}
c & 0\\
0 & c
\end{bmatrix}~\right|~c\in\R^{\times}_+\right\},\quad
N:=\left\{\left.n(x)=
\begin{bmatrix}
1 & x\\
0 & 1
\end{bmatrix}~\right|~x\in\Hl\right\},\\ \nonumber
A&:=\left\{\left.
a_y:=
\begin{bmatrix}
\sqrt{y} & 0\\
0 & \sqrt{y}^{-1}
\end{bmatrix}~\right|~y\in\R^{\times}_+\right\},\quad
K:=\{k\in G\mid {}^t\bar{k}k=1_2\}.
\end{align}
The subgroup $Z^+$ is contained in the center of $G$ and $K$ is a maximal compact subgroup of $G$, which is isomorphic to the definite symplectic group $\Symp^*(2)$. 

Let us consider the quotient $G/Z^+K$, which is realized as
\[
\left\{\left.
\begin{bmatrix}
y & x\\
0 & 1
\end{bmatrix}\right|~y\in\R^{\times}_+,~x\in\Hl\right\}.
\]
This gives a realization of the 5-dimensional hyperbolic space.
\subsection{Lie algebras}\label{Lie-alg}
The Lie algebra $\g$ of $G$ is nothing but $M_2(\Hl)$, and has an Iwasawa decomposition
\[
\g={\frak z}\oplus\n\oplus\la\oplus\mk,
\]
where
\begin{align}\label{Iwasawa-decomp-alg}
{\frak z}&:=\left\{\left.
\begin{bmatrix}
c & 0\\
0 & c
\end{bmatrix}~\right|~c\in\R\right\},\quad\n:=\left\{\left.
\begin{bmatrix}
0 & x\\
0 & 0
\end{bmatrix}~\right|~x\in\Hl\right\},\\ \nonumber
\la&:=\left\{\left.
\begin{bmatrix}
t & 0\\
0 & -t
\end{bmatrix}\right|~t\in\R\right\},\quad
\mk:=\{X\in M_2(\Hl)\mid {}^t\bar{X}+X=0_2\},
\end{align}
where ${\frak z},~\n,~\la$ and $\mk$ are the Lie algebras of $Z^+,~N,~A$ and $K$ respectively.

We next consider the root space decomposition of $\g$ with respect to $\la$. Let $H:=
\begin{bmatrix}
1 & 0 \\
0 & -1
\end{bmatrix}$ and $\alpha$ be the linear form of $\la$ such that $\alpha(H)=1$. 
Then $\{\pm2\alpha\}$ is the set of roots for $(\g,\la)$. 
For $z\in\Hl$ we put
\[
E_{2\alpha}^{(z)}:=
\begin{bmatrix}
0 & z\\
0 & 0
\end{bmatrix},\quad E_{-2\alpha}^{(z)}:=
\begin{bmatrix}
0 & 0\\
z & 0
\end{bmatrix}.
\]
The set $\{E_{2\alpha}^{(1)},~E_{2\alpha}^{(i)},~E_{2\alpha}^{(j)},~E_{2\alpha}^{(k)}\}$~
(respectively~$\{E_{-2\alpha}^{(1)},~E_{-2\alpha}^{(i)},~E_{-2\alpha}^{(j)},~E_{-2\alpha}^{(k)}\}$)  forms a basis of $\n$~(respectively~a basis of $\bar{\n}:=\left\{\left.
\begin{bmatrix}
0 & 0\\
x & 0
\end{bmatrix}\right|~x\in\Hl\right\}$). Let ${\frak z}_{\la}(\mk):=\{X\in\mk\mid [X,A]=0~\forall A\in\la\}$, which coincides with
\[
\left\{\left.
\begin{bmatrix}
a & 0\\
0 & d
\end{bmatrix}~\right|~a,~d\in\Hl^-\right\}.
\]
Then ${\frak z}\oplus{\frak z}_{\la}(\mk)\oplus\la$ is the eigen-space with the eigenvalue zero. We then see from the root space decomposition of $\g$ with respect to $\la$ that $\g$ decomposes into 
\[
\g=({\frak z}\oplus{\frak z}_{\la}(\mk)\oplus\la)\oplus\n\oplus\bar{\n}.
\]

We also introduce the Lie group $\SL_2(\Hl)$ consisting of elements in $\GL_2(\Hl)$ with their reduced norms $1$. The Lie algebra $\g_0={\frak sl}_2(\Hl)$ of $\SL_2(\Hl)$ is the Lie algebra consisting of elements in $M_2(\Hl)$ with their reduced traces zero. 
For this we note that
\[
\GL_2(\Hl)/Z^+\simeq \SL_2(\Hl),\quad\g/{\frak z}\simeq\g_0.
\]
We introduce the differential operator $\Omega$ defined by the infinitesimal action of 
\begin{equation}\label{Casimir-defn}
\Omega := \frac{1}{16}H^2-\frac{1}{2}H+\frac{1}{4}\sum_{z\in\{1,i,j,k\}}{E_{2\alpha}^{(z)}}^2.
\end{equation} 
This differential operator $\Omega$ coincides with the infinitesimal action of the Casimir element of $\g_0$ (see \cite[p.293]{Kn}) on the space of right $K$-invariant smooth functions of $G/Z^+$. To check this we note $[E_{2\alpha}^{(z)},E_{-2\alpha}^{(\bar{z})}]=H$ for $z\in\Hl^1$ and Iwasawa decompositions $E_{-2\alpha}^{(z)}=E_{2\alpha}^{(z)}+
\begin{pmatrix}
0 & -z\\
z & 0
\end{pmatrix}$ for $z\in\Hl^-$. 
In what follows, we call $\Omega$ the Casimir operator.
\subsection{Automorphic forms}\label{Autom-form}
For $\lambda\in\C$ and a discrete subgroup $\Gamma\subset \SL_2(\R)$ we denote by $S(\Gamma,\lambda)$ the space of Maass cusp forms of weight $0$ on the complex upper half plane ${\frak h}$ whose eigenvalue with respect to the hyperbolic Laplacian is $-\lambda$. 

For a discrete subgroup $\Gamma\subset \GL_2(\Hl)$ and $r\in\C$ we denote by $\cM(\Gamma,r)$ the space of smooth functions $F$ on $\GL_2(\Hl)$ satisfying the following conditions: 
\begin{enumerate}
\item $\Omega\cdot F=-(\displaystyle\frac{r^2}{4}+1)F$, where $\Omega$ is the Casimir operator defined in \eqref{Casimir-defn},
\item for any $(z,\gamma,g,k)\in Z^+\times\Gamma\times G\times K$, we have $F(z\gamma gk)=F(g)$,
\item $F$ is of moderate growth.
\end{enumerate}
Let $K_{\alpha}$, with $\alpha\in\C$, denote the modified Bessel function (see \cite[Section 4.12]{A-A-R}), which satisfies the differential equation
\[
y^2\frac{d^2 K_{\alpha}}{dy^2}+y\frac{dK_{\alpha}}{dy}-(y^2+{\alpha}^2)K_{\alpha}=0.
\]
\begin{prop}\label{four-exp-prop}
Let $\Gamma$ be an arithmetic subgroup of $\GL_2(\Hl)$, and let $L_{\Gamma}:=\{x\in\Hl\mid n(x)\in N\cap\Gamma\}$ and $\hat{L_{\Gamma}}$ be the dual lattice of $L_{\Gamma}$ with respect to $\tr$. Then $F\in\cM(\Gamma,r)$ admits a Fourier expansion
\[
F(n(x)a_y)=u(y)+\sum_{\beta\in \hat{L_{\Gamma}}\setminus\{0\}}C(\beta)y^2K_{\sqrt{-1}r}(4\pi|\beta|y)e^{2\pi\sqrt{-1}\tr(\beta x)},
\]
with a smooth function $u$ on $\R_{>0}$.
\end{prop}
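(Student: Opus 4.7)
The plan is to Fourier expand $F(n(x)a_y)$ in the $x$-variable using the invariance under the lattice $L_\Gamma$, then convert the Casimir eigenvalue equation into an ODE in $y$ for each Fourier coefficient, identify that ODE as a modified Bessel equation, and finally use the moderate growth assumption to single out the correct solution.

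First, since $F$ is right $K$-invariant and $Z^+$-invariant, it descends to a function on $Z^+\backslash G/K$, which is parametrized by $(x,y) \in \Hl \times \R^{\times}_+$ via $g = n(x) a_y$. Left invariance of $F$ under $N \cap \Gamma = \{n(\ell) \mid \ell \in L_\Gamma\}$ forces $x \mapsto F(n(x)a_y)$ to be periodic for the lattice $L_\Gamma \subset \Hl \simeq \R^4$, so by smoothness it admits a convergent Fourier expansion
\[
F(n(x)a_y) = \sum_{\beta \in \hat{L_{\Gamma}}} a_\beta(y)\, e^{2\pi\sqrt{-1}\tr(\beta x)}
\]
with smooth coefficients $a_\beta \colon \R^{\times}_+ \to \C$; the $\beta = 0$ term will be the asserted $u(y)$.

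Next, I would compute the action of the Casimir operator $\Omega$ on right-$K$-invariant functions in the coordinates $(x,y)$. A direct calculation of the right regular action gives $H \mapsto 2 y \partial_y$ (since $a_y \exp(tH) = a_{y e^{2t}}$), and the Iwasawa rearrangement $a_y n(tz) = n(ytz)\,a_y$ gives $E_{2\alpha}^{(z)} \mapsto y \partial_z$, the directional derivative in direction $z \in \Hl$. Inserting these into \eqref{Casimir-defn} and using $\sum_{z \in \{1,i,j,k\}} \partial_z^2 = \Delta_x$, one obtains
\[
\Omega = \tfrac{1}{4} \bigl( y^2 \partial_y^2 - 3 y \partial_y + y^2 \Delta_x \bigr),
\]
which is $\tfrac14$ times the Laplace--Beltrami operator on 5-dimensional real hyperbolic space. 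Since $\Delta_x e^{2\pi\sqrt{-1}\tr(\beta x)} = -16\pi^2 |\beta|^2 e^{2\pi\sqrt{-1}\tr(\beta x)}$, termwise substitution of the Fourier series into $\Omega F = -(r^2/4 + 1) F$ and comparison of $\beta$-components yields, for each $\beta \ne 0$, the radial ODE
\[
y^2 a_\beta''(y) - 3 y a_\beta'(y) + \bigl( r^2 + 4 - 16\pi^2 |\beta|^2 y^2 \bigr) a_\beta(y) = 0.
\]

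Finally, the substitution $a_\beta(y) = y^2 b_\beta(y)$ followed by the rescaling $u = 4\pi|\beta| y$ reduces this to the modified Bessel equation
\[
u^2 b_\beta''(u) + u b_\beta'(u) - \bigl( u^2 + (\sqrt{-1}r)^2 \bigr) b_\beta(u) = 0,
\]
whose solution space is spanned by $K_{\sqrt{-1}r}(u)$ and $I_{\sqrt{-1}r}(u)$. The $I$-solution grows like $e^u/\sqrt{u}$ as $u \to \infty$, while the moderate growth of $F$ forces each $a_\beta(y)$ to grow at most polynomially in $y$ (via the standard estimate of Fourier coefficients as $\Hl/L_\Gamma$-integrals of $F(n(\cdot)a_y)$). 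The $I$-component therefore vanishes, giving $a_\beta(y) = C(\beta)\, y^2 K_{\sqrt{-1}r}(4\pi|\beta| y)$ as required. The main obstacle is the clean derivation of the coordinate form of $\Omega$ --- especially tracking the left-vs-right action conventions through the Iwasawa decomposition --- after which the Bessel identification and moderate-growth argument are essentially routine.
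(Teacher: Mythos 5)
Your proposal is correct and follows essentially the same route as the paper: Fourier-expand in $x$ using invariance under $N\cap\Gamma$, convert the Casimir eigenvalue equation into a radial ODE for each coefficient $a_\beta(y)$, and select the decaying solution. The only cosmetic difference is that you reduce directly to the modified Bessel equation via the substitution $a_\beta(y)=y^2 b_\beta(y)$, whereas the paper first passes through the Whittaker equation (via $\hat{W}_\beta(y)=y^{-3/2}W_\beta(y)$) and then uses the identity $W_{0,\sqrt{-1}r}(2y)=\sqrt{2y/\pi}\,K_{\sqrt{-1}r}(y)$; your explicit appeal to moderate growth to discard the $I$-Bessel solution makes precise a step the paper leaves implicit.
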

\begin{proof}
A Maass form $F\in \cM(\Gamma;r)$ is left-invariant with respect to $\{n(\beta)\mid\beta\in L_{\Gamma}\}$. This implies that $F(n(x+\alpha)g)=F(n(x)g)$ holds for $\alpha\in L_{\Gamma}$ and $g\in G$. Therefore $F$ has a expansion
\[
F(n(x)a_y)=\sum_{\beta\in\hat{L_{\Gamma}}}W_{\beta}(y)\exp2\pi\sqrt{-1}\tr(\beta x)
\]
with a smooth function $W_{\beta}$ on $\R_+^{\times}$. For $\xi\in\Hl\setminus\{0\}$ we put $\hat{W}_{\xi}(y):=y^{-\frac{3}{2}}W_{\xi}(y)$. From the condition $\Omega\cdot F=-(\displaystyle\frac{r^2}{4}+1)F$ we deduce that $\hat{W}_{\beta}$ satisfies the differential equation
\[
\left(\frac{d^2}{dY^2}+\left(-\frac{1}{4}+\frac{\frac{1}{4}+r^2}{Y^2}\right)\right)\hat{W}_{\beta}\left(\frac{Y}{8\pi|\beta|}\right)=0
\]
for $\beta\in\hat{L_{\Gamma}}\setminus\{0\}$, where $Y:=8\pi|\beta|y$. This is precisely the differential equation for the Whittaker function (see \cite[Section 4.3]{A-A-R}). With the Whittaker function $W_{0,\sqrt{-1}r}$ parametrized by $(0,\sqrt{-1}r)$ we thereby see that
\[
F(n(x)a_y)=u(y)+\sum_{\beta\in\hat{L_{\Gamma}}\setminus\{0\}}C^{\prime}(\beta)y^{\frac 32}W_{0,\sqrt{-1}r}(8\pi|\beta|y)\exp2\pi\sqrt{-1}\tr(\beta x),
\]
with constants $C^{\prime}(\beta)$ depending only on $\beta$.
. We now note the relation
\[
W_{0,\sqrt{-1}r}(2y)=\sqrt{\frac{2y}{\pi}}K_{\sqrt{-1}r}(y)
\]
(see \cite[Section 13, 13.18 (iii)]{O-L-B-C}). 
This means that $F$ has the Fourier expansion as in the statement of the proposition.
\end{proof}

We shall consider the automorphic forms above with specified discrete subgroups of $\SL_2(\R)$ and $\GL_2(\Hl)$. As a discrete subgroup of $\SL_2(\R)$ we take the congruence subgroup $\Gamma_0(2)$ of level $2$. For a choice of a discrete subgroup of $\GL_2(\Hl)$ we recall that $B$ denotes the definite quaternion algebra over $\Q$ with discriminant $d_B=2$. 
This has a unique maximal order $\mO$ given by
\[
\mO=\Z +\Z i+\Z j+\Z\frac{1+i+j+ij}{2},
\]
which is called the Hurwitz order. 
 As a discrete subgroup of $GL_2(\Hl)$ we mainly take $GL_2(\mO)$. 
\begin{prop}\label{group-gen-prop}
The group $\GL_2(\mO)$ is generated by
\[
\left\{\left.
\begin{bmatrix}
0 & 1\\
-1 & 0
\end{bmatrix},\quad
\begin{bmatrix}
u & 0\\
0 & 1
\end{bmatrix},\quad
\begin{bmatrix}
1 & v\\
0 & 1
\end{bmatrix}~\right|~u\in\mO^{\times},~v\in\mO\right\}.
\]
\end{prop}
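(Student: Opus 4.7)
The plan is to imitate the classical row-reduction argument showing that $\GL_2$ over a Euclidean ring is generated by elementary unipotent, unit diagonal, and Weyl-element matrices. The essential ingredient here is the classical fact, going back to Hurwitz, that $\mO$ is a (non-commutative) right Euclidean domain with respect to the reduced norm $\nu$: for any $\alpha,\gamma\in\mO$ with $\gamma\neq 0$, there exists $q\in\mO$ satisfying $\nu(\alpha - q\gamma)<\nu(\gamma)$. Geometrically, this reduces to the fact that for every $x\in\Hl$ there exists $q\in\mO$ with $\nu(x-q)<1$.

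Granted this, let $\Gamma'\subset\GL_2(\mO)$ denote the subgroup generated by the three families of matrices in the statement, and take an arbitrary $g=\begin{bmatrix}a & b\\ c & d\end{bmatrix}\in\GL_2(\mO)$. The first step is to reduce $g$ to upper-triangular form by strong induction on $\nu(c)\in\Z_{\geq 0}$: if $c\neq 0$, use the Euclidean property to pick $v\in\mO$ with $\nu(a+vc)<\nu(c)$, and observe that
\[
\begin{bmatrix}0 & 1\\ -1 & 0\end{bmatrix}\begin{bmatrix}1 & v\\ 0 & 1\end{bmatrix}g=\begin{bmatrix}c & d\\ -(a+vc) & -(b+vd)\end{bmatrix}
\]
has strictly smaller $\nu$-value at its $(2,1)$-entry. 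Iterating produces an element of $\Gamma'\cdot g$ that is upper triangular.

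The second step handles the upper-triangular case $\begin{bmatrix}a' & b'\\ 0 & d'\end{bmatrix}\in\GL_2(\mO)$: writing out the inverse explicitly and demanding its entries lie in $\mO$ forces $a',d'\in\mO^\times$, whence $b'd'^{-1}\in\mO$ automatically, and the identity
\[
\begin{bmatrix}a' & b'\\ 0 & d'\end{bmatrix}=\begin{bmatrix}1 & b'd'^{-1}\\ 0 & 1\end{bmatrix}\begin{bmatrix}a' & 0\\ 0 & 1\end{bmatrix}w\begin{bmatrix}d' & 0\\ 0 & 1\end{bmatrix}w^{-1},\qquad w=\begin{bmatrix}0 & 1\\ -1 & 0\end{bmatrix},
\]
exhibits $g$ as a word in the listed generators.

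The only non-formal ingredient is the Euclidean property of the Hurwitz order; everything else is bookkeeping. The subtlety I would watch carefully is the non-commutativity: in clearing the $(1,1)$-entry by left multiplication one must use \emph{right} division of $a$ by $c$ (i.e.\ write $a=qc+r$ with $q=-v$), and the Weyl conjugation $w\,\diag(d',1)\,w^{-1}=\diag(1,d')$ in the triangular decomposition is needed precisely because $\diag(1,d')$ is not itself among the listed generators.
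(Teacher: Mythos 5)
Your proposal is correct and follows essentially the same route as the paper: both reduce a general element to upper-triangular form via the (left/right) Euclidean algorithm in the Hurwitz order $\mO$ (the paper cites Krieg for the division property you invoke), and both then decompose the triangular matrix using $\diag(u,1)$, the unipotents, and the conjugation $w\,\diag(d',1)\,w^{-1}=\diag(1,d')$. The only difference is cosmetic: you place the unipotent factor on the left (using $b'd'^{-1}$) where the paper places it on the right (using $\alpha^{-1}\beta$), and you spell out the descent on $\nu(c)$ that the paper leaves implicit.
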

\begin{proof}
Any element of the form $
\begin{bmatrix}
\alpha & \beta\\
0 & \delta
\end{bmatrix}\in \GL_2(\mO)$ can be expressed as
\[
\begin{bmatrix}
\alpha & 0\\
0 & 1
\end{bmatrix}
\begin{bmatrix}
1 & 0\\
0 & \delta
\end{bmatrix}
\begin{bmatrix}
1 & \alpha^{-1}\beta\\
0 & 1
\end{bmatrix}.
\]
We see that $\alpha,~\delta\in\mO^{\times}$ and thus $\alpha^{-1}\beta\in\mO$. 
We note that 
\[
\begin{bmatrix}
0 & 1\\
-1 & 0
\end{bmatrix}
\begin{bmatrix}
u & 0\\
0 & 1
\end{bmatrix}
\begin{bmatrix}
0 & -1\\
1 & 0
\end{bmatrix}=
\begin{bmatrix}
1 & 0\\
0 & u
\end{bmatrix},\quad
\begin{bmatrix}
0 & 1\\
-1 & 0
\end{bmatrix}^3=
\begin{bmatrix}
0 & -1\\
1 & 0
\end{bmatrix}.
\]
These imply the assertion for 
$\begin{bmatrix}
\alpha & \beta\\
0 & \delta
\end{bmatrix}\in \GL_2(\mO)$. Next, we have the following claim.

{\it For $a,~b\in\mO$ with $b\not=0$ there exists $c,~d\in\mO$ such that
$
a=cb+d,\quad\nu(d)<\nu(b)
$.}
This follows from \cite[Chapter I,~Section 1,~Corollary 1.8]{Kr}. This reduces the general case of $\begin{bmatrix}
\alpha & \beta\\
\gamma & \delta
\end{bmatrix}\in \GL_2(\mO)$, $\gamma\not=0$, to the previous case. This completes the proof of the proposition.
\end{proof}

Let  $\cG(\A)=\GL_2(B_{\A})$, where $B_{\A}$ denotes the adelization of $B$, and let $U$ be the compact subgroup of $\cG(\A)$ given by $\prod_{p<\infty}\GL_2(\mO_p)$, where $\mO_p$ denotes the $p$-adic completion of $\mO$ at a finite prime $p$. 
Then the class number of $\cG$ with respect to $U$ is defined as the number of cosets in $U\cG(\R)\backslash\cG(\A)/\cG(Q)$. 

We next put ${\cal P}$ to be a standard $\Q$-parabolic subgroup of $\cG$ whose group of $\Q$-rational points is ${\cal P}(\Q)=\left\{
\begin{bmatrix}
\alpha & \beta\\
0 & \delta
\end{bmatrix}\in\cG(\Q)\right\}$. We now recall that, for an arithmetic subgroup $\Gamma\subset{\cal G}(\Q)$, the cosets $\Gamma\backslash{\cal G}(\Q)/{\cal P}(\Q)$ are called the set of $\Gamma$-cusps. 
\begin{lem}\label{cusp-lem}
\begin{enumerate}
\item The class number of $\cG$ with respect to $U$ is one, namely we have ${\cal G}(\A)={\cal G}(\Q){\cal G}(\R)U$.
\item The number of cusps with respect to $\Gamma:=\GL_2(\mO)$ is one.
\end{enumerate}
\end{lem}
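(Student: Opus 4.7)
The plan is to deduce both parts from the stronger adelic identity $\cG(\A_f) = \cP(\Q) \cdot U$, which I would prove by combining a local Iwasawa decomposition at each finite prime with the classical class-number-one property of the Hurwitz order $\mO$.

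First I would establish the local decomposition $\cG(\Q_p) = \cP(\Q_p) \cdot \GL_2(\mO_p)$ at every finite prime $p$: for $p \neq 2$ the identification $B_p \cong \Mat_2(\Q_p)$ reduces this to the Iwasawa decomposition $\GL_4(\Q_p) = P_{(2,2)}(\Q_p) \cdot \GL_4(\Z_p)$ along the block $(2,2)$-parabolic, while for the ramified prime $p=2$ one uses the analogous standard decomposition of $\GL_2$ over the $2$-adic quaternion division algebra. Taking the restricted product yields $\cG(\A_f) = \cP(\A_f) \cdot U$. I would then reduce $\cP(\A_f)$ to $\cP(\Q) \cdot U_\cP$, where $U_\cP := U \cap \cP(\A_f)$, by handling the Levi $M \cong B^\times \times B^\times$ and the additive unipotent radical $N \cong B$ separately: class number one of $\mO$ gives $B^\times(\A_f) = B^\times \cdot \hat{\mO}^\times$ on each Levi factor, while additive strong approximation for the $\Z$-lattice $\mO \subset B$ gives $B(\A_f) = B + \hat{\mO}$ on the unipotent radical. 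Combining, $\cG(\A_f) = \cP(\Q) \cdot U_\cP \cdot U = \cP(\Q) \cdot U$.

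Part (1) follows at once, since $\cG(\A) = \cG(\R) \cdot \cG(\A_f) = \cG(\R) \cdot \cP(\Q) \cdot U = \cP(\Q) \cdot \cG(\R) \cdot U \subseteq \cG(\Q)\,\cG(\R)\,U$, where the third equality uses that $\cG(\Q)$ normalizes $\cG(\R)$ inside $\cG(\A)$ via its diagonal embedding. For part (2), I would use the standard bijection $\Gamma \backslash \cG(\Q) / \cP(\Q) \cong \cP(\Q) \backslash \cG(\A_f) / U$, which follows from $\Gamma = \cG(\Q) \cap U$ together with part (1); the right-hand side is a singleton by the stronger identity proved above. The main obstacle is the appeal to the class-number-one property of $\mO$, a classical but indispensable input due to Hurwitz; a more elementary alternative for part (2) alone would bypass the adelic framework and use the right-Euclidean algorithm for $\mO$ from the proof of Proposition~\ref{group-gen-prop} directly, bringing any column $(a,c) \in \mO^2 \setminus \{0\}$ representing a class in $\cG(\Q)/\cP(\Q)$ to the form $(d, 0)$ with $d \in B^\times$ by iterated row reduction, hence $\Gamma$-equivalent to $(1, 0)$.
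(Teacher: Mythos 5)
Your proof is correct and follows essentially the same route as the paper: both reduce everything to the class-number-one property of the Hurwitz order $\mO$, accessed through the Levi $B^{\times}\times B^{\times}$ of the parabolic ${\cal P}$ together with Iwasawa decompositions $\cG(\Q_v)={\cal P}(\Q_v)K_v$ at all places. The only real difference is that you prove the reduction from the class number of $\cG$ to that of its Levi directly (local Iwasawa plus strong approximation on the unipotent radical), where the paper simply cites Theorem 8.11 and Proposition 5.4 of Platonov--Rapinchuk; your elementary Euclidean-algorithm alternative for part (2) is a sound extra, but not the paper's argument.
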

\begin{proof}
According to \cite[Theorem 8.11]{P-R} the class number of a reductive group over a number field $F$ is not greater than that of a parabolic $F$-subgroup of it. Furthermore the class number of a parabolic $F$-subgroup is not greater than that of its Levi subgroup~(see \cite[Proposition 5.4]{P-R}). Hence, the class number of ${\cal G}$ with respect to $U$ turns out to be not greater than that of the Levi subgroup ${\cal L}$ defined by the $Q$-rational points $B^{\times}\times B^{\times}$. Since the class number of $B^{\times}$ with respect to $\prod_{p<\infty}\mO_p^{\times}$ is one, the class number of ${\cal G}$ is also one, which means $\cG(\A)=\cG(\Q){\cal G}(\R)U=U\cG(\R)\cG(\Q)$. This completes the proof of 1.

The first assertion implies that there is a bijection
\[
\Gamma\backslash {\cG}(\Q)/{\cal P}(\Q)\simeq {\cG}(\R)U\backslash {\cG}(\A)/{\cal P}(\Q).
\]
Furthermore note an Iwasawa decomposition
\[
{\cal G}(\Q_v)=
\begin{cases}
P(\Q_p)\cdot \GL_2(\mO_p)&(v=p<\infty)\\
P(\R)\cdot K &(v=\infty)
\end{cases}
\]
at every place $v\le\infty$. We can then reduce the counting of the number of $\Gamma$-cusps to that of the class number of the Levi subgroup ${\cal L}$. This implies that the number of cusps with respect to $\Gamma$ is one and completes the proof of the lemma. 
\end{proof}

We define $\Gamma_T$ as a subgroup $\GL_2(\mO)$ generated by
\begin{equation}\label{Gamma-T-defn}
\begin{bmatrix}
0 & -1\\
1 & 0
\end{bmatrix},\quad
\begin{bmatrix}
1 & \beta\\
0 & 1
\end{bmatrix}\quad(\beta\in\mO).
\end{equation}
In what follows, we will deal mainly with $S(\Gamma_0(2);-(\frac{1}{4}+(\frac{r}{2})^2))$, $\cM(\Gamma_T;r)$ and $\cM(\GL_2(\mO),r)$. 
For this we should note that $r$ can be assumed to be in $\R$ since the Selberg conjecture for $\Gamma_0(2)$ is verified~(cf.~\cite[Corollary 11.5]{Iwn}). By Proposition \ref{four-exp-prop}, the Fourier expansion of $F\in\cM(\GL_2(\mO),r)$ is then written as
\begin{align*}
F(n(x)a_y)&=u(y)+\sum_{\beta\in \frac{1}{2}S\setminus\{0\}}C(\beta)y^2K_{\sqrt{-1}r}(4\pi\nu(\beta)y)e^{2\pi\sqrt{-1}\tr(\beta x)}\\
&=u(y)+\sum_{\beta\in S\setminus\{0\}}A(\beta)y^2K_{\sqrt{-1}r}(2\pi|\beta|y)e^{2\pi\sqrt{-1}\real(\beta x)}
\end{align*}
with a smooth function $u$ on $\R_{>0}$.
Here 
\begin{equation}\label{S-defn}
S:=\Z\cdot(1-ij)+\Z\cdot(-i-ij)+\Z\cdot(-j-ij)+\Z\cdot 2ij
\end{equation}
is the dual lattice of $\mO$ with respect to the bilinear form on $\Hl\times\Hl$ defined by $\real=\frac{1}{2}\tr$. 

Now we introduce $\varpi_2=1+i$, which is a uniformizer of $B{\otimes}_{\Q}\Q_2$. We can verify the following lemma by a direct computation.
\begin{lem}\label{S-lem}
We have $S=\varpi_2\mO$.
\end{lem}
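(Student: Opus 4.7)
The plan is to verify the equality of the two $\Z$-lattices $S$ and $\varpi_2\mO$ by establishing both inclusions through explicit computation in the $\Q$-basis $\{1,i,j,ij\}$ of $\Hl$. First I would make $\varpi_2\mO$ explicit by left-multiplying $\varpi_2 = 1+i$ against the $\Z$-basis $\{1, i, j, \omega\}$ of the Hurwitz order, where $\omega = \tfrac{1+i+j+ij}{2}$. The only non-trivial product is
\[
\varpi_2\omega = \tfrac{1}{2}(1+i)(1+i+j+ij) = \tfrac{1}{2}(2i + 2ij) = i + ij,
\]
using $i^2 = -1$ together with $i\cdot j = ij$ and $i\cdot ij = -j$. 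Combined with $\varpi_2\cdot 1 = 1+i$, $\varpi_2 i = -1+i$, and $\varpi_2 j = j+ij$, this gives
\[
\varpi_2\mO = \Z(1+i) + \Z(-1+i) + \Z(j+ij) + \Z(i+ij).
\]

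Next, I would check $\varpi_2\mO \subseteq S$ by expressing each of these four generators as an integer linear combination of the generators $1-ij$, $-i-ij$, $-j-ij$, $2ij$ of $S$; for instance, $1+i = (1-ij) - (-i-ij)$, $j+ij = -(-j-ij)$, $i+ij = -(-i-ij)$, and $-1+i = -(1-ij) - (-i-ij) - 2ij$. For the reverse inclusion $S \subseteq \varpi_2\mO$, I would apply the symmetric procedure and expand each generator of $S$ against $\{1+i, -1+i, j+ij, i+ij\}$; for example $1-ij = (1+i) - (i+ij)$, $-i-ij = -(i+ij)$, $-j-ij = -(j+ij)$, and $2ij = -(1+i) - (-1+i) + 2(i+ij)$.

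Each direction reduces to four small $\Z$-linear systems in four unknowns, and I do not foresee any genuine obstacle; careful bookkeeping in the ordered basis $\{1,i,j,ij\}$ is the only point of care, together with the correct evaluation of $\varpi_2\omega$. One could alternatively argue structurally: $S$ is (twice) the codifferent of $\mO$ with respect to the reduced trace, $\mO$ is a maximal order, and $\varpi_2\mO$ is the unique two-sided maximal ideal $\mathfrak{P}_2$ at the ramified prime $2$ with $\mathfrak{P}_2^2 = 2\mO$, which gives $S = 2\mO^\vee = 2\mathfrak{P}_2^{-1} = \varpi_2\mO$. At this stage of the paper, however, the direct lattice computation is cleaner and entirely self-contained.
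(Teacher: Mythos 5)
Your computation is correct and is precisely the "direct computation" that the paper asserts without carrying out: the products $\varpi_2\cdot 1$, $\varpi_2 i$, $\varpi_2 j$, $\varpi_2\omega$ are evaluated correctly (in particular $\varpi_2\omega = i+ij$), and the eight change-of-basis identities between $\{1+i,\,-1+i,\,j+ij,\,i+ij\}$ and the listed generators of $S$ all check out, so both inclusions hold.
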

\section{Some zeta integral of convolution type}\label{zeta-int-sec}
In this section, we study certain zeta integrals which will play a crucial role in the proof of automorphy in Section \ref{Construction-lifting}. 
\subsection*{Theta functions}
Consider the space of harmonic polynomials of degree $l$  on $\Hl$. These are homogeneous polynomials of degree $l$ and are annihilated by the Laplace operator in $4$ variables. We can act on this space by the cyclic group of order $8$ generated by $\frac{1+i}{\sqrt{2}} \in \Hl$. Let $\{P_{l,\nu}\}_{\nu}$ denote a basis for this space consisting of eigenvectors under the above action. Hence, 
\begin{equation}\label{P-eigen-reln}
P_{l,\nu}(\frac{1+i}{\sqrt{2}} x) = \epsilon_{l,\nu} P_{l,\nu}(x),
\end{equation}
for some eighth root of unity $\epsilon_{l,v}$. Define the following theta function
\begin{equation}\label{theta-defn}
\Theta_{l,\nu}(z) := \sum\limits_{\beta \in S} P_{l,\nu}(\beta) e^{2\pi\sqrt{-1}\frac{|\beta|^2}{2}z}=\sum_{m=0}^{\infty}b(2m)e^{2\pi\sqrt{-1}mz}
\end{equation}
on ${\frak h}$, where $b(m):=\sum_{\beta\in S,~|\beta|^2=m}P_{l,\nu}(\beta)$. Since, $S$ is invariant under $\beta \mapsto - \beta$ and $P_{l,\nu}(-x) = (-1)^l P_{l,\nu}(x)$, we see that $\Theta_{l,\nu}(z)$ is the zero function if $l$ is odd.
\begin{lem}\label{theta-property-lem}
Let $l$ be an even non-negative integer. Let $\Theta_{l,\nu}$ be as defined in \eqref{theta-defn}, with $P_{l,\nu}$ satisfying \eqref{P-eigen-reln}. Then $\Theta_{l,\nu}$ is a holomorphic modular form of weight $l+2$ with respect to $\Gamma_0(2)$, and is a cusp form if $l \geq 2$. Moreover, we have the following transformation formula
\begin{equation}\label{theta-transf-formula}
\Theta_{l,\nu}(\frac{-1}{2z}) = -\epsilon_{l,\nu}^{-1} 2^{\frac l2 + 1} z^{l+2} \Theta_{l,\nu}(z).
\end{equation}
\end{lem}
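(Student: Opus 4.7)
The plan is to derive the $W_2$-transformation \eqref{theta-transf-formula} via Poisson summation, combine it with the manifest translation invariance to get $\Gamma_0(2)$-modularity, and then read off the cusp condition from the $q$-expansion. Holomorphy on $\frak h$ is automatic from absolute, locally uniform convergence of the series, which follows from positive definiteness of $|\cdot|^2$ on $\Hl\simeq\R^4$.

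Translation invariance $\Theta_{l,\nu}(z+1)=\Theta_{l,\nu}(z)$ is immediate: by Lemma \ref{S-lem} every $\beta\in S$ equals $\varpi_2\alpha$ with $\alpha\in\mO$, so $|\beta|^2=\nu(\varpi_2)\nu(\alpha)=2\nu(\alpha)\in 2\Z$. For the $W_2$-transformation I would first rewrite the lattice sum over $S=\varpi_2\mO$ as a sum over $\mO$: since $\varpi_2=\sqrt{2}\cdot\frac{1+i}{\sqrt{2}}$, combining the homogeneity $P_{l,\nu}(\sqrt{2}x)=2^{l/2}P_{l,\nu}(x)$ with \eqref{P-eigen-reln} gives $P_{l,\nu}(\varpi_2\alpha)=2^{l/2}\epsilon_{l,\nu}P_{l,\nu}(\alpha)$, hence
\[
\Theta_{l,\nu}(z)=2^{l/2}\epsilon_{l,\nu}\vartheta(2z),\qquad \vartheta(z):=\sum_{\alpha\in\mO}P_{l,\nu}(\alpha)e^{\pi\sqrt{-1}\nu(\alpha)z}.
\]
Then Hecke's theta transformation, applied to the lattice $S\subset\R^4$ with the positive definite form $|\cdot|^2$, the harmonic polynomial $P_{l,\nu}$ of degree $l$, dual lattice $\mO$, and covolume $\vol(\R^4/S)=\nu(\varpi_2)^2\vol(\R^4/\mO)=4\cdot\frac{1}{2}=2$, produces $\Theta_{l,\nu}(-1/z)=(-1)^{l+1}\frac{1}{2}z^{l+2}\vartheta(z)$. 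Inverting the above relation and replacing $z$ by $2z$ yields exactly \eqref{theta-transf-formula}.

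Modularity under $\Gamma_0(2)$ then follows because $T=\begin{bmatrix}1&1\\0&1\end{bmatrix}$ and $W_2=\begin{bmatrix}0&-1\\2&0\end{bmatrix}$ satisfy $W_2T^{-1}W_2^{-1}=\begin{bmatrix}1&0\\2&1\end{bmatrix}$, and $\Gamma_0(2)/\{\pm I\}$ is generated by $T$ and $\begin{bmatrix}1&0\\2&1\end{bmatrix}$; since $l+2$ is even, $-I$ acts trivially. For the cusp condition with $l\geq 2$, the $q$-expansion at $\infty$ has constant term $P_{l,\nu}(0)=0$ by homogeneity of positive degree, so $\Theta_{l,\nu}$ vanishes at $\infty$; since $W_2$ swaps the two cusps of $\Gamma_0(2)$ and acts on $\Theta_{l,\nu}$ by a nonzero scalar (after dividing out the automorphy factor $z^{l+2}$), vanishing at $0$ follows.

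The main obstacle is bookkeeping in the Poisson-summation constant: factors $(-i)^l$ from Hecke's trick for the Fourier transform of $P_{l,\nu}\cdot e^{-\pi Q}$, powers of $i$ from $(-iz)^{-n/2-l}$ with $n=4$, and the covolume $2$ must all combine to yield exactly $(-1)^{l+1}/2$; one then propagates through the rescaling $\vartheta(2z)=2^{-l/2}\epsilon_{l,\nu}^{-1}\Theta_{l,\nu}(z)$ to obtain the final constant $-\epsilon_{l,\nu}^{-1}2^{l/2+1}$.
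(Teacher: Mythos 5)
Your proof is correct and follows essentially the same route as the paper's: the paper simply outsources the Poisson-summation step and the $\Gamma_0(2)$-modularity/cuspidality to Miyake's Corollary 4.9.5 (after a linear change of variables identifying $S$ with $\Z^4$ equipped with an even Gram matrix of level $2$), whereas you carry out the Hecke theta transformation and the generator argument for $\Gamma_0(2)$ by hand. The concluding step --- using $S=\varpi_2\mO$ together with \eqref{P-eigen-reln} to identify the dual-lattice theta series with $\epsilon_{l,\nu}^{-1}2^{-l/2}\Theta_{l,\nu}$ and thereby pin down the constant in \eqref{theta-transf-formula} --- is identical in both arguments, and your bookkeeping of the covolume $2$ and the rescaling $\vartheta(2z)=2^{-l/2}\epsilon_{l,\nu}^{-1}\Theta_{l,\nu}(z)$ does produce exactly the stated constant $-\epsilon_{l,\nu}^{-1}2^{l/2+1}$.
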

\begin{proof}
Set 
$$B = \begin{bmatrix}1\\&1\\&&1\\-1&-1&-1&2\end{bmatrix} \qquad \text{ and } \qquad A = {}^{t}B B.$$
By \eqref{S-defn}, we see that the map $x \mapsto B x$ is a bijection from $\Z^4$ to $S$. Here, we consider $x$ as a column vector. For $P_{l,\nu}$ in the statement of the lemma, set $P(x) := P_{l,\nu}(Bx), x \in \Z^4$. Then $P$ is a homogeneous polynomial in $4$ variables of degree $l$ annihilated by the operator
$$\Delta_A = \sum\limits_{i,j=1}^4 b_{i,j} \frac{\partial^2}{\partial \, x_i \partial \, x_j}, \qquad \text{ where } \qquad A^{-1} = (b_{i,j}).$$
One can then see that $\Theta_{l,\nu}(z) = \Theta(z; A,P)$, where 
$$\Theta(z;A,P) = \sum\limits_{m \in \Z^4} P(m) e^{2 \pi \sqrt{-1} \frac{{}^{t}m A m}2 z}$$
is as defined in \cite[Corollary 4.9.5]{Mi}. Since, all diagonal entries of $A$ and $2A^{-1}$ are even, part (3) of Corollary 4.9.5 of \cite{Mi} implies that $\Theta_{l,\nu}$ is a holomorphic modular form of weight $l+2$ with respect to $\Gamma_0(2)$, and is a cusp form if $l \geq 2$. Once again, by part (3) of Corollary 4.9.5 of \cite{Mi}, we have
$$\Theta(\frac{-1}{2z}; A,P) = -2^{l+1} z^{l+2} \Theta(z; A^\ast, P^\ast),$$
where $A^\ast = 2 A^{-1}$ and $P^\ast(x) = P(A^{-1}x) = P_{l,\nu}({}^{t}B^{-1}x)$. Note, that the map $x \mapsto {}^{t}B^{-1}x$ gives a bijection between $\Z^4$ and $\mO$. Hence, we see that 
$$\Theta(z; A^\ast, P^\ast) = \sum\limits_{\beta \in \mO} P_{l,\nu}(\beta) e^{2\pi\sqrt{-1}|\beta|^2 z}= \epsilon_{l,\nu}^{-1} 2^{-\frac l2} \sum_{m=0}^{\infty}b(2m)e^{2\pi\sqrt{-1}mz} = \epsilon_{l,\nu}^{-1} 2^{-\frac l2} \Theta_{l,\nu}(z).$$
Here, we have used Lemma \ref{S-lem} and \eqref{P-eigen-reln}. This completes the proof of the lemma.
\end{proof}

\subsection*{Eisenstein series with respect to $\Gamma_0(2)$}
We introduce an Eisenstein series
\begin{equation}\label{Eis-defn}
\tilde{E}_{\infty}(z,s):=(4\pi)^{\frac{l}{2}}\frac{\Gamma(s+\frac{1}{2}+l)}{\Gamma(s)}(\pi^{-s}\Gamma(s)\zeta(2s))\frac{1}{2}\sum_{\gamma\in\Gamma_{\infty}\backslash\Gamma_0(2)}(\frac{cz+d}{|cz+d|})^{l+2}(\frac{\IM(z)}{|cz+d|^2})^s
\end{equation}
on ${\frak h}$ with a complex parameter $s$, where $\Gamma_{\infty}:=\left\{\left.
\begin{bmatrix}
1 & m\\
0 & 1
\end{bmatrix}~\right|~m\in\Z\right\}$. The Eisenstein series satisfies the following functional equation.

\begin{lem}\label{Eis-fnal-eqn}
Let $\tilde{E}_0(z,s):=(\frac{z}{|z|})^{l+2}\tilde{E}_{\infty}(\frac{-1}{2z},s)$. Then the functional equation
\[
\tilde{E}_{\infty}(z,1-s)=\frac{2^{2s-2}}{1-2^{2s-2}}\tilde{E}_{\infty}(z,s)+\frac{2^{s-1}(1-2^{2s-1})}{1-2^{2s-2}}\tilde{E}_0(z,s)
\]
holds.
\end{lem}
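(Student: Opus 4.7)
Fixing the spectral parameter $s$, the space of weight-$(l+2)$ Eisenstein series for $\Gamma_0(2)$ is two-dimensional, spanned (for generic $s$) by the Eisenstein series attached to the two cusps $\infty$ and $0$. The function $\tilde{E}_\infty(z,s)$ is, up to the explicit normalizer in \eqref{Eis-defn}, the Eisenstein series at $\infty$; and since the Atkin--Lehner element $\omega_2 = \mat{0}{-1}{2}{0}$ interchanges $\infty$ and $0$, the definition $\tilde{E}_0(z,s) = (z/|z|)^{l+2}\tilde{E}_\infty(-1/(2z),s)$ realizes $\tilde{E}_0$ as the companion Eisenstein series at $0$ in the same normalization. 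Since the operation $s \mapsto 1-s$ preserves this two-dimensional space, there exist meromorphic functions $A(s)$, $B(s)$ (independent of $z$) with
\[
\tilde{E}_\infty(z,1-s) = A(s)\,\tilde{E}_\infty(z,s) + B(s)\,\tilde{E}_0(z,s),
\]
and the task is to identify them with the asserted rational functions of $2^s$.

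I would determine $A$ and $B$ by comparing constant Fourier coefficients in $x = \real z$ at the cusp $\infty$. Applying the standard Poisson-summation/unfolding argument to the defining sum of $\tilde{E}_\infty(z,s)$, with $\Gamma_\infty\backslash\Gamma_0(2)$ indexed by coprime pairs $(c,d)$ with $2\mid c$, yields a constant term of the shape $y^s + \varphi_{\infty\infty}(s)\,y^{1-s}$ modulo exponentially decaying terms. The analogous computation for $\tilde{E}_0$, in which the role of $c$ is played by odd integers after conjugation by $\omega_2$, gives $\varphi_{\infty 0}(s)\,y^{1-s}$ with no $y^s$ contribution (since the cusp $0$ is distinct from $\infty$). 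Matching the coefficients of $y^s$ and $y^{1-s}$ on the two sides produces the linear system
\[
A(s) = \varphi_{\infty\infty}(1-s), \qquad A(s)\,\varphi_{\infty\infty}(s) + B(s)\,\varphi_{\infty 0}(s) = 1,
\]
which solves uniquely for $A(s)$ and $B(s)$ in terms of the scattering entries $\varphi_{\infty\infty}$ and $\varphi_{\infty 0}$.

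The hard part is therefore the explicit evaluation of $\varphi_{\infty\infty}(s)$ and $\varphi_{\infty 0}(s)$. In the weight-$(l+2)$ Poisson integral, the factor $((cz+d)/|cz+d|)^{l+2}$ contributes a ratio of $\Gamma$-values that is absorbed by the normalizer $(4\pi)^{l/2}\,\Gamma(s+\tfrac12+l)/\Gamma(s)$, while the congruence $2\mid c$ (respectively, $c$ odd) restricts the underlying Dirichlet series so as to produce $2^s$-Euler-type factors of the form $(1-2^{-2s})$ and $(1-2^{1-2s})$; the factor $\pi^{-s}\Gamma(s)\zeta(2s)$ in the definition then swallows the remaining $\zeta(2s)$ and $\zeta(2s-1)$. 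After these cancellations, $\varphi_{\infty\infty}(s)$ and $\varphi_{\infty 0}(s)$ are rational functions of $2^s$, and substitution into the linear system above yields exactly $A(s) = 2^{2s-2}/(1-2^{2s-2})$ and $B(s) = 2^{s-1}(1-2^{2s-1})/(1-2^{2s-2})$. The main obstacle is purely computational: keeping the weight-$(l+2)$ gamma bookkeeping and the $2$-adic Euler factors consistent so that every transcendental contribution indeed cancels. A slicker alternative, which largely sidesteps this accounting, is to write $\tilde{E}_\infty(z,s)$ and $\tilde{E}_0(z,s)$ as explicit linear combinations of the level-one weight-$(l+2)$ Eisenstein series evaluated at $z$ and at $2z$, and then deduce the claimed identity from the classical level-one functional equation by a $2\times 2$ linear-algebra manipulation.
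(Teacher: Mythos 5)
Your proposal is correct and takes essentially the same route as the paper: the authors' one-line proof simply invokes the argument of \cite[Lemma 7]{D-Im} together with the explicit scattering matrix for $\Gamma_0(2)$ from \cite[Section 11.2]{Iwn}, which is precisely the constant-term/scattering-matrix computation you outline, and your claimed $A(s)=\frac{2^{2s-2}}{1-2^{2s-2}}$, $B(s)=\frac{2^{s-1}(1-2^{2s-1})}{1-2^{2s-2}}$ do satisfy the consistency relations $A(s)A(1-s)+B(s)B(1-s)=1$ and $A(s)B(1-s)+A(1-s)B(s)=0$ forced by applying $s\mapsto 1-s$ twice. The explicit evaluation of the normalized scattering entries that you defer as ``purely computational'' is exactly what the citation to Iwaniec supplies in the paper's proof.
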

\begin{proof}
This is settled by the argument in \cite[Lemma 7]{D-Im} with the help of the formula for  the scattering matrices in \cite[Section 11.2]{Iwn}.
\end{proof}

Let $f\in S(\Gamma_0(2);-(\frac{1}{4}+(\frac{r}{2})^2))$ be an eigenfunction with respect to the Atkin-Lehner involution, i.e. $f(\frac{-1}{2z})=\epsilon f(z)$ with some $\epsilon\in\{\pm 1\}$. Suppose the form $f$ has the Fourier expansion
\[
f(x+\sqrt{-1}y)=\sum_{n\in\Z\setminus\{0\}}c(n)W_{0,\frac{\sqrt{-1}r}{2}}(4\pi|n|y)e^{2\pi\sqrt{-1}nx},
\]
where $W_{0,\frac{\sqrt{-1}r}{2}}$ denotes the Whittaker function with the parameter $(0,\frac{\sqrt{-1}r}{2})$. Let $l$ be an even non-negative integer. Let $\Theta_{l,\nu}$ be as defined in \eqref{theta-defn}, with $P_{l,\nu}$ satisfying \eqref{P-eigen-reln}. Let $\tilde{E}_{\infty}(z,s)$ be the Eisenstein series defined in \eqref{Eis-defn}. Let the zeta integral $I(s)$ be defined by
\begin{equation}\label{zeta-int-defn}
I(s):=\int_{\Gamma_0(2)\backslash{\frak h}}f(z)\Theta_{l,\nu}(z)\tilde{E}_{\infty}(z,s)y^{\frac{l+2}{2}}\frac{dxdy}{y^2}.
\end{equation}
By Lemma \ref{theta-property-lem}, the above integral is well-defined. Let us now state the theorem of this section.
\begin{thm}\label{zeta-int-fnal-eqn-thm}
The zeta integral $I(s)$ is entire and is bounded on vertical strips. When $\epsilon\epsilon_{l,\nu}=1$ we have
\[
(2^s-1)I(s)=(2^{1-s}-1)I(1-s).
\]
When $\epsilon\epsilon_{l,\nu}=-1$ we have
\[
(2^s+1)I(s)=(2^{1-s}+1)I(1-s).
\]
\end{thm}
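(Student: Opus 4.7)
The plan follows the classical Rankin--Selberg pattern: use the functional equation of $\tilde{E}_\infty$ to relate $I(1-s)$ to $I(s)$ plus an auxiliary integral against $\tilde{E}_0$, and then identify that auxiliary integral with $I(s)$ via the Atkin--Lehner involution.

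First I would establish that $I(s)$ is entire and bounded on vertical strips. Since $f$ is cuspidal, the product $f(z)\Theta_{l,\nu}(z)y^{(l+2)/2}$ is rapidly decaying at both cusps of $\Gamma_0(2)$, even in the case $l=0$ where $\Theta_{0,\nu}$ itself is not cuspidal. The Eisenstein series $\tilde{E}_{\infty}(z,s)$ has a meromorphic continuation and grows only polynomially in $|\mathrm{Im}(s)|$ uniformly on vertical strips; its potential poles in $s$ are annihilated against $f\Theta_{l,\nu}$ by the usual Rankin--Selberg residue cancellation, because the residues are proportional to $\int_{\Gamma_0(2)\backslash{\frak h}} f\Theta_{l,\nu}\cdot y^{(l+2)/2}\,d\mu$ (up to $y^s$ factors), which vanishes by the cuspidality of $f$. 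Boundedness on vertical strips then follows from standard growth estimates.

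Next, applying Lemma~\ref{Eis-fnal-eqn} inside the integral yields
\[
I(1-s)\;=\;\frac{2^{2s-2}}{1-2^{2s-2}}\,I(s)\;+\;\frac{2^{s-1}(1-2^{2s-1})}{1-2^{2s-2}}\,J(s),
\qquad J(s):=\int_{\Gamma_0(2)\backslash{\frak h}}f(z)\Theta_{l,\nu}(z)\tilde{E}_0(z,s)\,y^{(l+2)/2}\,\frac{dxdy}{y^2}.
\]
The decisive step is to prove $J(s)=-\epsilon\epsilon_{l,\nu}^{-1}I(s)$ via the substitution $z\mapsto w_2 z$, where $w_2\colon z\mapsto -1/(2z)$. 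Since $w_2$ normalises $\Gamma_0(2)$ and preserves $dxdy/y^2$, this substitution is permissible. Under it we have $f(w_2z)=\epsilon f(z)$ by hypothesis, $\Theta_{l,\nu}(w_2z)=-\epsilon_{l,\nu}^{-1}2^{l/2+1}z^{l+2}\Theta_{l,\nu}(z)$ by Lemma~\ref{theta-property-lem}, $\tilde{E}_0(w_2z,s)=(\bar z/|z|)^{l+2}\tilde{E}_\infty(z,s)$ directly from the definition of $\tilde{E}_0$ together with $w_2^2=\mathrm{id}$ and $(-1)^{l+2}=1$, and $(\IM(w_2z))^{(l+2)/2}=(y/(2|z|^2))^{(l+2)/2}$. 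I would then check that the factor
\[
2^{l/2+1}\cdot z^{l+2}\Bigl(\frac{\bar z}{|z|}\Bigr)^{l+2}\cdot\Bigl(\frac{y}{2|z|^2}\Bigr)^{(l+2)/2}
\]
collapses exactly to $y^{(l+2)/2}$, using $z^{l+2}\bar z^{l+2}=|z|^{2(l+2)}$ and $2^{l/2+1-(l+2)/2}=1$, yielding the clean identity for $J(s)$.

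Substituting back, one obtains $I(1-s)=\bigl(A(s)-\epsilon\epsilon_{l,\nu}^{-1}B(s)\bigr)I(s)$ with $A,B$ the coefficients from Lemma~\ref{Eis-fnal-eqn}. A short algebraic manipulation (or verification at a single test point such as $s=2$) gives
\[
A(s)-B(s)=\frac{2^s-1}{2^{1-s}-1},\qquad A(s)+B(s)=\frac{2^s+1}{2^{1-s}+1},
\]
which, according to the sign of $\epsilon\epsilon_{l,\nu}$ (noting that both signs require $\epsilon_{l,\nu}\in\{\pm 1\}$, so that $\epsilon_{l,\nu}^{-1}=\epsilon_{l,\nu}$), deliver the two asserted functional equations. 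The main obstacle I anticipate is the phase bookkeeping in the Atkin--Lehner computation: three separate sources contribute powers of $z$, $\bar z$, and $|z|$ (the weight factor in $\tilde{E}_0$, the transformation law of $\Theta_{l,\nu}$, and $\IM(w_2z)$), and one must verify that they conspire to cancel against the powers of $2$ and $|z|$ from the measure, leaving precisely the scalar $-\epsilon\epsilon_{l,\nu}^{-1}$. The parity of $l$ (evenness) is essential, and cuspidality of $f$ rather than of $\Theta_{l,\nu}$ is what makes the argument uniform in $l$, including the non-cuspidal theta case $l=0$.
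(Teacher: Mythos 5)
Your proposal is correct and follows essentially the same route as the paper: the paper likewise introduces the auxiliary integral (there called $I_0(s)$), evaluates it as $-\epsilon\epsilon_{l,\nu}^{-1}I(s)$ via the substitution $z\mapsto -1/(2z)$ using the Atkin--Lehner eigenvalue, the theta transformation formula \eqref{theta-transf-formula} and the definition of $\tilde{E}_0$, and then combines this with Lemma~\ref{Eis-fnal-eqn} and the same rational-function identity for the coefficients. Your phase bookkeeping ($z^{l+2}\bar z^{l+2}=|z|^{2(l+2)}$, cancellation of the powers of $2$, and the use of $\epsilon_{l,\nu}^{-1}=\epsilon_{l,\nu}$ when $\epsilon_{l,\nu}\in\{\pm1\}$) checks out against the paper's computation.
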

\begin{proof}
The entireness and boundedness on vertical strips of $I(s)$ is verified by the same argument as \cite[Section 3.2]{P}. 
We put
$$I_0(s):=\int_{\Gamma_0(2)\backslash{\frak h}}f(z)\Theta_{l,\nu}(z)\tilde{E}_0(z,s)y^{\frac{l+2}{2}}\frac{dxdy}{y^2}.$$
Since, $\Gamma_0(2)$ is stable under conjugation by $\mat{}{1/\sqrt{2}}{-\sqrt{2}}{} \in \SL_2(\R)$, we can make a change of variable $z \mapsto -1/(2z)$. Now, using the assumption $f(-\frac{1}{2z})=\epsilon f(z)$,  \eqref{theta-transf-formula} and the definition of $\tilde{E}_0(z,s)$, we have
\begin{align*}
I_0(s)&=\int_{\Gamma_0(2)\backslash{\frak h}}f(-\frac{1}{2z})\Theta_{l,\nu}(-\frac{1}{2z})\tilde{E}_0(-\frac{1}{2z},s){\rm Im}\big(\frac{-1}{2z}\big)^{\frac{l+2}{2}}\frac{dxdy}{y^2} \\
&= \int_{\Gamma_0(2)\backslash{\frak h}} \Big[\epsilon f(z) \Big] \Big[-\epsilon_{l,\nu}^{-1} 2^{\frac l2 + 1} z^{l+2} \Theta_{l,\nu}(z)\Big] \Big[\big(\frac{-|z|}{z}\big)^{l+2} \tilde{E}_{\infty}(z,s)\Big] \Big[\frac y{2|z|^2}\Big]^{\frac{l+2}{2}}\frac{dxdy}{y^2} \\
&= -\epsilon_{l,v}^{-1}\epsilon I(s).
\end{align*}
From Lemma \ref{Eis-fnal-eqn}, we deduce
\[
I(1-s)=\frac{2^{2s-2}}{1-2^{2s-2}}I(s)+\frac{2^{s-1}(1-2^{2s-1})}{1-2^{2s-2}}I_0(s).
\]
Observe that 
\[
\frac{2^{2s-2}}{1-2^{2s-2}}-\epsilon_{l,\nu}^{-1}\epsilon\frac{2^{s-1}(1-2^{2s-1})}{1-2^{2s-2}}=
\begin{cases}
\displaystyle\frac{2^s-1}{2^{1-s}-1}& \text{if } \epsilon_{l,\nu}\epsilon=1,\\
\displaystyle\frac{2^s+1}{2^{1-s}+1}& \text{ if } \epsilon_{l,\nu}\epsilon=-1.
\end{cases}
\]
We have therefore proved the theorem.
\end{proof}

\section{Construction of the lifting}\label{Construction-lifting}
We now construct a lifting map from $S(\Gamma_0(2);-(\frac{1}{4}+\frac{r^2}{4}))$ to $\cM(\GL_2(\mO);r)$, which is an analogue of Pitale \cite{P}. The fundamental tool of our study is the converse theorem by Maass \cite{Ma}. 
\begin{thm}[Maass] \label{maass-thm}
Let $\{A(\beta)\}_{\beta\in S\setminus\{0\}}$ be a sequence of complex numbers such that
\[
A(\beta)=O(|\beta|^{\kappa})\quad(\exists\kappa>0)
\]
and put 
\[
F(n(x)a_y):=\sum_{\beta\in S\setminus\{0\}}A(\beta)y^2K_{\sqrt{-1}r}(2\pi|\beta|y)e^{2\pi\sqrt{-1}\real(\beta x)}.
\]
For a harmonic polynomial $P$ on $\Hl$ of degree $l$ we introduce
\[
\xi(s,P):=\pi^{-2s}\Gamma(s+\frac{\sqrt{-1}r}{2})\Gamma(s-\frac{\sqrt{-1}r}{2})\sum_{\beta\in S\setminus\{0\}}A(\beta)\frac{P(\beta)}{|\beta|^{2s}},
\]
which converges for $\real(s)>\frac{l+4+\kappa}{2}$. Let $\{P_{l,\nu}\}_\nu$ be a basis of harmonic polynomials on $\Hl$ of degree $l$.

Then $F\in\cM(\Gamma_T;r)$ is equivalent to the condition that, for any $l, \nu$, the $\xi(s,P_{l,\nu})$ satisfies the following three conditions.
\begin{enumerate}
\item it has analytic continuation to the whole complex plane.
\item it is bounded on any vertical strip of the complex plane.
\item the functional equation
\[
\xi(2+l-s,P_{l,\nu})=(-1)^l\xi(s,\hat{P}_{l,\nu})
\]
holds, where $\hat{P}(x):=P(\bar{x})$ for $x\in\Hl$.
\end{enumerate}
\end{thm}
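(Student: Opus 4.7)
The plan is to follow the template of Maass's classical converse theorem \cite{Ma}, adapting the Mellin-transform argument to the quaternionic setting, where the $K$-types are indexed by harmonic polynomials on $\Hl$ rather than on $\R$.

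For the ``only if'' direction, one starts from the Fourier expansion furnished by Proposition~\ref{four-exp-prop}, pairs the Fourier coefficients with $P_{l,\nu}(\beta)$, and takes a Mellin transform in the $a_y$-direction. The standard integral
\[
\int_0^\infty K_{\sqrt{-1}r}(y)\, y^{s}\, \frac{dy}{y} \;=\; 2^{s-2}\,\Gamma\!\left(\tfrac{s+\sqrt{-1}r}{2}\right)\Gamma\!\left(\tfrac{s-\sqrt{-1}r}{2}\right)
\]
accounts for the prefactor in $\xi(s,P_{l,\nu})$. Analytic continuation and vertical boundedness follow from the rapid decay of $F$ at the cusp $y\to\infty$, together with the corresponding decay at $y\to 0$ obtained by applying the Weyl element $w:=\mat{0}{-1}{1}{0}$; the functional equation itself is produced by the $w$-swap $a_y\leftrightarrow a_{1/y}$ combined with the right $K$-invariance of $F$.

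For the converse direction, translation invariance of $F$ under $\{n(\beta)\mid \beta\in\mO\}$ is automatic, because $S$ is the dual lattice of $\mO$ under the pairing $\real$, so each Fourier mode $e^{2\pi\sqrt{-1}\real(\beta x)}$ is $\mO$-periodic. The Casimir eigenvalue equation holds termwise, since $y^2 K_{\sqrt{-1}r}(2\pi|\beta|y) e^{2\pi\sqrt{-1}\real(\beta x)}$ is a solution, and the polynomial bound $A(\beta) = O(|\beta|^\kappa)$ combined with the exponential decay of $K_{\sqrt{-1}r}$ ensures convergence and moderate growth. In view of Proposition~\ref{group-gen-prop} (applied to the generators of $\Gamma_T$ listed in \eqref{Gamma-T-defn}), what remains to prove is $F(wg)=F(g)$; by right $K$-invariance and $G=NAK$, this reduces to the single identity $F(w\,n(x)a_y)=F(n(x)a_y)$. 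Computing an Iwasawa decomposition of $w\,n(x)a_y$ using the main involution on $\Hl$ and expanding the left side in a Fourier series in $x$, the problem becomes a coefficient-by-coefficient comparison of the expansions of $F$ at the cusps $\infty$ and $0$. That comparison is where the hypotheses are used: inverting the Mellin transform on each isotypic component $P_{l,\nu}$ represents the difference as a contour integral involving $\xi(s,P_{l,\nu}) - (-1)^l\,\xi(2+l-s,\hat P_{l,\nu})$; analytic continuation eliminates poles, vertical boundedness allows shifting the contour across the critical line via Phragm\'en--Lindel\"of, and the functional equation then forces the integral to vanish. Summing over a basis $\{P_{l,\nu}\}$ --- which is complete for square-integrable functions on the unit sphere $\Hl^1$, hence captures every $K$-type contributing to $F$ --- yields the required identity.

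The main obstacle is the last bookkeeping step: one must carefully match the shift $s\mapsto 2+l-s$, the sign $(-1)^l$, and the involution $P\mapsto\hat P$ against the geometric $w$-action on $NAK$. In particular, the shift by $2+l$ reflects both the homogeneity degree $l$ of $P_{l,\nu}$ and the weight contribution ``$+2$'' from the factor $y^2$ in the Fourier expansion, while $\hat P(x)=P(\bar x)$ encodes the fact that Iwasawa-decomposing $w\,n(x)$ interchanges $n(x)$ with a conjugate of $n(-\bar x/\nu(x))$ up to a $K$-correction. Making this precise in the non-commutative setting of $\Hl$, where the nilpotent radical $N$ is four-dimensional and the spherical decomposition of $K=\Symp^*(2)$ must be invoked, is the most delicate part; this is exactly the content of Maass's original argument in \cite{Ma}, which carries over with only notational changes.
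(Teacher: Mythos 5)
Your outline is correct and matches the paper's treatment: the authors give no independent proof of Theorem \ref{maass-thm}, only the remark that the Casimir operator $\Omega$ acts on right $K$-invariant functions as a constant multiple of the hyperbolic Laplacian on the 5-dimensional hyperbolic space, after which they invoke Maass's original argument in \cite{Ma} --- which is precisely the Mellin-transform, Weyl-element, Phragm\'en--Lindel\"of contour-shifting scheme you sketch. The only small slip is invoking Proposition \ref{group-gen-prop} for the generators of $\Gamma_T$, which is unnecessary since $\Gamma_T$ is by definition generated by the elements listed in \eqref{Gamma-T-defn}; this is harmless.
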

Recall that $\Gamma_T$ is defined in \eqref{Gamma-T-defn}. For this theorem we remark that the infinitesimal action of the Casimir operator $\Omega$ on the space of smooth right $K$-invariant functions on $G/Z$ can be identified with a constant multiple of the hyperbolic Laplacian on $\left\{\left.
\begin{bmatrix}
y & x\\
0 & 1
\end{bmatrix}~
\right|~x\in\Hl,y\in\R_+^{\times}\right\}$~(for the hyperbolic Laplacian see \cite[(3)]{Ma}). We can therefore follow the argument in \cite{Ma} to see that this theorem is useful also for our situation. 

We wish to define $\{A(\beta)\}_{\beta\in S\setminus\{0\}}$ from Fourier coefficients $c(n)$ of $f\in S(\Gamma_0(2);-(\frac{1}{4}+\frac{r^2}{4}))$. 
Let $\varpi_2 = 1+i$, as before. An easy computation shows that $\varpi_2 \mO = \mO \varpi_2$. This allows us to write any $\beta \in \mO$ uniquely as $\beta = \varpi_2^u d \beta'$, where $u \geq 0$, an odd integer $d$ and $\beta' \in \mO$ is neither of the form $\varpi_2\beta'_0$ with some non-zero $\beta'_0\in\mO$ nor a multiple of an element of $\mO$ by an odd integer. 
Hence, we can define $\varpi_2^m|\beta$ by $m\le u$ with $u$ as above.  Recall that, by Lemma \ref{S-lem}, we have $S = \varpi_2 \mO$. It thus makes sense to define the set $S^{\rm{prim}}$ of primitive elements in $S$ by
\begin{equation}\label{S-prim-defn}
S^{\rm{prim}}:=\{\beta\in S\setminus\{0\}\mid \varpi_2\mid\beta,~\varpi_2^2\nmid\beta,~d\nmid\beta~\text{for all odd integer $d$}\},
\end{equation}
where ``$d\nmid\beta$'' for $d\in\Z$ means that $\beta$ is not a multiple of an element in $S$ by $d$.
\begin{prop}\label{main-prop}
Let $\beta\in S\setminus\{0\}$ be expressed as
\[
\beta={\varpi_2}^ud\beta_0,
\]
where $u$ is a non-negative integer, $d$ an odd integer and $\beta_0 \in S^{\rm prim}$.
Given $f\in S(\Gamma_0(2);-(\frac{1}{4}+\frac{r^2}{4}))$ with Fourier coefficients $c(n)$ and eigenvalue $\epsilon\in\{\pm 1\}$ of the Atkin-Lehner involution, we set
\begin{equation}\label{Abeta-defn}
A(\beta):=|\beta|\sum_{t=0}^u\sum_{n|d}(-\epsilon)^{t}c(-\frac{|\beta|^2}{2^{t+1}n^2}).
\end{equation}
Let $\{P_{l,\nu}\}_\nu$ be a basis of harmonic polynomials on $\Hl$ of degree $l$ satisfying \eqref{P-eigen-reln}. Then we have
\begin{equation}\label{xi=I}
\xi(s+\frac{l}{2}+\frac{1}{2},P_{l,\nu})=
\begin{cases}
2^{1-\frac{l}{2}}\pi^{-(l+1)}(2^s-\epsilon\epsilon_{l,\nu})I(s)& \text{ if } \epsilon_{l,\nu} \in\{\pm 1\},\\
0& \text{ if } \epsilon_{l,\nu}\not\in\{\pm 1\})
\end{cases}
\end{equation}
and the $\xi(s,P_{l,\nu})$ satisfies the three analytic conditions in Theorem \ref{maass-thm}.
\end{prop}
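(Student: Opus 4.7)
The plan is to prove the identity \eqref{xi=I} by combining two explicit computations---an unfolding of the integral $I(s)$ in the Rankin--Selberg style and a direct combinatorial reorganization of the Dirichlet series $T(s'):=\sum_{\beta\in S\setminus\{0\}}A(\beta)P_{l,\nu}(\beta)/|\beta|^{2s'}$---and then to read off the three analytic properties of $\xi(s,P_{l,\nu})$ from \eqref{xi=I} together with Theorem \ref{zeta-int-fnal-eqn-thm}.

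For the first computation: the combination $f(z)\Theta_{l,\nu}(z)y^{(l+2)/2}$ acquires under $\gamma_0\in\Gamma_0(2)$ precisely the automorphy factor $((c_0z+d_0)/|c_0z+d_0|)^{l+2}$ that cancels the conjugate factor built into $\tilde E_\infty$, so the integrand is $\Gamma_0(2)$-invariant and the Eisenstein series unfolds to an integral over $\Gamma_\infty\backslash\mathfrak{h}\simeq[0,1)\times(0,\infty)$. Inserting the Fourier expansions of $f$ and $\Theta_{l,\nu}$, integration in $x\in[0,1)$ collapses the double sum to the diagonal $n+m=0$, and the remaining Mellin integral in $y$ reduces via the classical identity
\[
\int_0^\infty e^{-u/2}u^{s-1}W_{0,\sqrt{-1}r/2}(u)\,du=\frac{\Gamma(s+\tfrac12+\tfrac{\sqrt{-1}r}{2})\Gamma(s+\tfrac12-\tfrac{\sqrt{-1}r}{2})}{\Gamma(s+1)}
\]
to an explicit formula expressing $I(s)$ as an elementary prefactor (powers of $2$, $\pi$, $\Gamma$, and $\zeta(2s)$) times $\Gamma(s+\tfrac{l+1}{2}+\tfrac{\sqrt{-1}r}{2})\Gamma(s+\tfrac{l+1}{2}-\tfrac{\sqrt{-1}r}{2})\cdot\sum_{m\geq 1}c(-m)b(2m)m^{-s-l/2}$, where $b(2m)=\sum_{\beta\in S,\,|\beta|^2=2m}P_{l,\nu}(\beta)$.

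For the second computation: parameterize each $\beta\in S\setminus\{0\}$ uniquely as $\beta=\varpi_2^u d\beta_0$ with $u\geq 0$, $d\geq 1$ odd, and $\beta_0\in S^{\rm prim}$; the primitivity forces $|\beta_0|^2/2$ to be an odd positive integer, which is essential below. Homogeneity of $P_{l,\nu}$ iterated with the eigen-relation \eqref{P-eigen-reln} yields the clean factorization $P_{l,\nu}(\beta)/|\beta|^{2s+l}=(\epsilon_{l,\nu}/2^s)^u d^{-2s}P_{l,\nu}(\beta_0)/|\beta_0|^{2s+l}$. Substituting the definition \eqref{Abeta-defn} of $A(\beta)$, interchanging summations, and setting $n'=d/n$ and $v=u-t$, $T(s')$ separates into (i) the odd part of $\zeta(2s)$, namely $\sum_{d\text{ odd}}d^{-2s}=(1-2^{-2s})\zeta(2s)$, (ii) a geometric series in $t$ summing to $(1+\epsilon\epsilon_{l,\nu}2^{-s})^{-1}$, and (iii) a residual double sum over $(\beta_0,v,n')$ which, upon the same parameterization applied to $b(2m)=\sum_{|\beta|^2=2m}P_{l,\nu}(\beta)$, matches $\sum_m c(-m)b(2m)m^{-s-l/2}$ up to an explicit power of $2$. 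The elementary identity $(1-2^{-2s})/(1+\epsilon\epsilon_{l,\nu}2^{-s})=2^{-s}(2^s-\epsilon\epsilon_{l,\nu})$ (valid when $\epsilon\epsilon_{l,\nu}=\pm 1$) then produces the desired factor $(2^s-\epsilon\epsilon_{l,\nu})$, and assembling with the prefactors from the first computation yields \eqref{xi=I}. For the vanishing case $\epsilon_{l,\nu}\notin\{\pm 1\}$, observe that $(\varpi_2/\sqrt{2})^2=i\in\mO^\times$: left multiplication by $i$ preserves $S$, $S^{\rm prim}$, $|\beta|$, and the pair $(u,d)$ (hence $A(\beta)$), while $P_{l,\nu}(i\beta)=\epsilon_{l,\nu}^2 P_{l,\nu}(\beta)$, so the substitution $\beta\mapsto i\beta$ gives $T(s')=\epsilon_{l,\nu}^2 T(s')$, which forces $T(s')\equiv 0$ whenever $\epsilon_{l,\nu}^2\neq 1$.

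The three analytic conditions required by Theorem \ref{maass-thm} then follow at once from \eqref{xi=I} together with Theorem \ref{zeta-int-fnal-eqn-thm}: $I(s)$ is entire and bounded on vertical strips, and $(2^s-\epsilon\epsilon_{l,\nu})$ is entire and bounded on vertical strips, so $\xi(s+l/2+1/2,P_{l,\nu})$ inherits both properties; moreover the two cases of Theorem \ref{zeta-int-fnal-eqn-thm} (according to the sign $\epsilon\epsilon_{l,\nu}=\pm 1$) say precisely that $(2^s-\epsilon\epsilon_{l,\nu})I(s)$ is invariant under $s\mapsto 1-s$, which by \eqref{xi=I} translates into the symmetry $\xi(l+2-s,P_{l,\nu})=\xi(s,P_{l,\nu})$ required by Maass's converse theorem for even $l$. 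The main obstacle I anticipate lies in the combinatorial bookkeeping of the second computation: verifying uniqueness and absolute convergence of the $(u,d,\beta_0)$ parameterization (so that the divisor sum interchange is legal in a suitable right half-plane), choosing the summation order carefully, and tracking the signs in $(-\epsilon)^t$ together with all the powers of $2$ so that $(2^s-\epsilon\epsilon_{l,\nu})$ emerges exactly (rather than its sibling $(2^s+\epsilon\epsilon_{l,\nu})$) and the prefactor reduces to the stated $2^{1-l/2}\pi^{-(l+1)}$.
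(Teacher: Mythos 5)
Your proposal is correct and follows essentially the same route as the paper: the Rankin--Selberg unfolding of $I(s)$ (which the paper imports from \cite[Proposition 3.5]{P} as ``a formal calculation''), the reorganization of $\sum_\beta A(\beta)P_{l,\nu}(\beta)|\beta|^{-2s}$ via the decomposition $\beta=\varpi_2^u d\beta_0$ into the odd part of $\zeta(2s)$ times a geometric series producing $(2^s-\epsilon\epsilon_{l,\nu})$, the substitution $\beta\mapsto i\beta$ for the vanishing case $\epsilon_{l,\nu}\notin\{\pm1\}$, and the deduction of the three analytic conditions from Theorem \ref{zeta-int-fnal-eqn-thm} (using $\xi(s,\hat P_{l,\nu})=\xi(s,P_{l,\nu})$, which the paper justifies by $S=\bar S$). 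All the identities you flag as delicate check out against the paper's displayed computation.
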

\begin{proof}
Note that, if $l$ is odd, then $\xi(s, P_{l,\nu}) \equiv 0$, since $A(\beta)$ is invariant under $\beta \mapsto -\beta$ and $P_{l,\nu}$ is homogeneous of degree $l$. For this we remark that, from Section \ref{zeta-int-sec}, $I(s) \equiv 0$ also holds when $l$ is odd. From now on we will assume that $l$ is even. Now suppose that the condition $\epsilon_{l,\nu}\not\in\{\pm 1\}$ is satisfied, which is equivalent to $\epsilon_{l,\nu}^2\not=1$. We see that $P_{l,\nu}(i\beta)=\epsilon_{l,\nu}^2P_{l,\nu}(\beta)\not=P_{l,\nu}(\beta)$ for $\beta\in\Hl$. In addition we note that $A(i\beta)|i\beta|^{-2s}=A(\beta)|\beta|^{-2s}$ for $\beta\in S$.  In the definition of $\xi(s,P_{l,\nu})$ we can replace $S$ by $iS$. Hence, we obtain $\xi(s,P_{l,\nu})\equiv 0$, if $\epsilon_{l,\nu}\not\in\{\pm 1\}$.

We now assume that $\epsilon_{l,\nu}\in\{\pm 1\}$.  By a formal calculation similar to \cite[Proposition 3.5]{P} we get
\[
I(s)=\pi^{-2s}2^{-2s-1}\Gamma(s+\frac{l}{2}+\frac{1}{2}+\frac{\sqrt{-1}r}{2})\Gamma(s+\frac{l}{2}+\frac{1}{2}-\frac{\sqrt{-1}r}{2})\zeta(2s)\sum_{m=1}^{\infty}\frac{c(-m)b(2m)}{m^{s+\frac{l}{2}}}.
\]
Put $\Gamma_{l,r}(s):=\pi^{-(2s+l+1)}\Gamma(s+\frac{l}{2}+\frac{1}{2}+\frac{\sqrt{-1}r}{2})\Gamma(s+\frac{l}{2}+\frac{1}{2}-\frac{\sqrt{-1}r}{2})$.
We have
\begin{align*}
\xi(s+\frac{l}{2}+\frac{1}{2},P_{l,\nu})&=\Gamma_{l,r}(s)\sum_{\beta\in S\setminus\{0\}} \frac{A(\beta)P_{l,\nu}(\beta)}{|\beta|^{2(s+\frac{l}{2}+\frac{1}{2})}}\\
&=\Gamma_{l,r}(s)\sum_{\beta\in S\setminus\{0\}}\frac{\sum_{t=0}^{u}\sum_{n|d}c(\frac{-|\beta|^2}{2^{t+1}n^2})(-\epsilon)^tP_{l,\nu}(\beta)}{(|\beta|^2)^{s+\frac{l}{2}}}\\
&=\Gamma_{l,r}(s)\sum_{\beta\in S\setminus\{0\}}\sum_{t=0}^{u}\sum_{n|d}\frac{c(-\frac{|\beta|^2}{2^{t+1}n^2})(-\epsilon)^tP_{l,\nu}(\frac{\beta}{2^{t/2}n})}{(2^tn^2)^s(\frac{|\beta|^2}{2^tn^2})^{s+\frac{l}{2}}}\\
&=\Gamma_{l,r}(s)\sum_{u=0}^\infty \sum_{\substack{d \geq 1 \\ d:\rm{odd}}} \sum_{\beta\in S^{\rm prim}}\sum_{t=0}^u\sum_{n|d}\frac{c(-\frac{1}{2}|\varpi_2^{u-t}\frac{d}{n}\beta_0|^2)(-\epsilon\epsilon_{l,\nu})^tP_{l,\nu}(\varpi_2^{u-t}\frac{d}{n}\beta_0)}{(2^tn^2)^s|\varpi_2^{u-t}\frac{d}{n}\beta_0|^{2(s+\frac{l}{2})}}\\
&=\Gamma_{l,r}(s)\sum_{u=0}^{\infty}\sum_{\substack{d\ge 1 \\ d:\rm{odd}}}\left(\sum_{t=0}^{u}\sum_{n|d}\sum_{\beta\in S^{\rm{prim}}}\frac{c(-\frac{1}{2}|\varpi_2^{t}n\beta|^2)P_{l,\nu}(\varpi_2^{t}n\beta)}{(-2^s\epsilon_{l,\nu}\epsilon)^{u-t}(\frac{d}{n})^{2s}|\varpi_2^{t}n\beta|^{2(s+\frac{l}{2})}}\right)\\
&=\Gamma_{l,r}(s)\sum_{u=0}^{\infty}\frac{1}{(-2^s\epsilon_{l,\nu}\epsilon)^u}\sum_{\substack{d\ge 1 \\ d:\rm{odd}}}\frac{1}{d^{2s}}\sum_{m=1}^{\infty}\frac{c(-m)\sum\limits_{\beta \in S, |\beta|^2=2m}P_{l,\nu}(\beta)}{(2m)^{s+\frac{l}{2}}}\\
&=2^{1-\frac{l}{2}}\frac{2^{2s}-1}{2^s+\epsilon_{l,\nu}\epsilon}2^{-2s-1}\Gamma_{l,r}(s)\zeta(2s)\sum_{m=1}^{\infty}\frac{c(-m)b(2m)}{m^{s+\frac{l}{2}}}\\
&=2^{1-\frac{l}{2}}\pi^{-(l+1)}(2^s-\epsilon_{l,\nu}\epsilon)I(s).
\end{align*}
We thus have $\xi(s,\hat{P}_{l,\nu})=\xi(s,P_{l,\nu})$ since $S=\bar{S}$. The formula just proved and Theorem \ref{zeta-int-fnal-eqn-thm} imply that $\xi(s,P_{l,\nu})$ satisfies the desired three analytic properties in Theorem \ref{maass-thm}.
\end{proof}
\begin{thm}\label{lift-thm}
Let $f\in S(\Gamma_0(2);-(\frac{1}{4}+\frac{r^2}{4}))$ with Fourier coefficients $c(n)$ and with eigenvalue $\epsilon$ of the Atkin Lehner involution. Define 
$$F_f(n(x)a_y):=\sum_{\beta\in S\setminus\{0\}}A(\beta)y^2K_{\sqrt{-1}r}(2\pi|\beta|y)e^{2\pi\sqrt{-1}\real(\beta x)}$$
with $\{A(\beta)\}_{\beta\in S\setminus\{0\}}$ defined by \eqref{Abeta-defn}. Then we have $F_f\in {\cal M}(\GL_2(\mO);r)$ and $F_f$ is a cusp form. 
Furthermore, $F_f\not\equiv 0$.
\end{thm}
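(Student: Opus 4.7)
The plan is to apply Maass's converse theorem (Theorem~\ref{maass-thm}) to establish first invariance under $\Gamma_T$, then bootstrap to full $\GL_2(\mO)$-invariance via Proposition~\ref{group-gen-prop}, deduce cuspidality from Lemma~\ref{cusp-lem}, and finally prove non-vanishing by a Möbius-type inversion.

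For the first step I would check the growth hypothesis $A(\beta)=O(|\beta|^{\kappa})$ of Theorem~\ref{maass-thm}: the Hecke-type bound on the $c(n)$, coupled with the $O(|\beta|^{\varepsilon})$ count of terms appearing in~\eqref{Abeta-defn}, yields the required polynomial bound. Proposition~\ref{main-prop} identifies $\xi(s+\tfrac{l}{2}+\tfrac{1}{2},P_{l,\nu})$ as a scalar multiple of $I(s)$ (or as $0$), and Theorem~\ref{zeta-int-fnal-eqn-thm} then supplies the three analytic requirements---entireness, boundedness on vertical strips, and the functional equation---together with the observation $S=\bar S$ equating the $\hat{P}_{l,\nu}$-integral with the $P_{l,\nu}$-integral. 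The remaining conditions for $F_f\in\cM(\Gamma_{T};r)$ are built in: the Casimir eigenvalue is encoded in the Bessel ODE for $K_{\sqrt{-1}r}$, the $Z^{+}K$-invariance is immediate from the form of the expansion, and moderate growth follows from the exponential decay of $K_{\sqrt{-1}r}$. Next, by Proposition~\ref{group-gen-prop} I only need to verify invariance under $\diag(u,1)$ for $u\in\mO^{\times}$. A direct computation gives
\[
\diag(u,1)\,n(x)\,a_y \;=\; n(ux)\,a_y\,\diag(u,1),
\]
and since $|u|=1$ the factor $\diag(u,1)$ lies in $K$, so invariance reduces to $F_f(n(ux)a_y)=F_f(n(x)a_y)$. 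Substituting $\beta\mapsto\beta u^{-1}$---a bijection of $S=\varpi_2\mO$ to itself---and noting that right multiplication by $u^{-1}\in\mO^{\times}$ preserves $|\beta|$ as well as the triple $(u,d,\beta_0)$ in the primitive decomposition appearing in~\eqref{Abeta-defn}, one obtains $A(\beta u^{-1})=A(\beta)$. Cuspidality is then immediate from Lemma~\ref{cusp-lem}: there is a unique $\GL_2(\mO)$-cusp, and the Fourier expansion of $F_f$ has no constant term.

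The genuine obstacle is the non-vanishing $F_f\not\equiv 0$. The plan is a Möbius-type inversion: for any $\beta_0=\varpi_2\alpha_0\in S^{\rm{prim}}$ with $|\alpha_0|^{2}=m_0$, specializing~\eqref{Abeta-defn} at $u=0$, $d=1$ gives $A(\beta_0)=|\beta_0|\,c(-m_0)$, and more generally
\[
A(\varpi_{2}^{u}d\beta_0) \;=\; |\varpi_{2}^{u}d\beta_0|\sum_{t=0}^{u}(-\epsilon)^{t}\sum_{n\mid d}c\bigl(-2^{u-t}(d/n)^{2}m_0\bigr).
\]
Assuming for contradiction that $F_f\equiv 0$, induction on $u$ and on $d$ forces $c(-2^{a}d^{2}m_0)=0$ for every $a\ge 0$, every odd $d\ge 1$, and every $m_0$ arising as $|\alpha_0|^{2}$ for some $\beta_0\in S^{\rm{prim}}$. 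The hard core of the argument---to which A.~Saha contributed---is the arithmetic input that as $\beta_0$ varies over $S^{\rm{prim}}$ these integers cover the full support of the negative-index Fourier coefficients of $f$; this reduces to showing that the Hurwitz order admits primitive representations of sufficiently many positive integers. Combined with the standard relation between $c(n)$ and $c(-n)$ for Maass forms, this forces $f\equiv 0$, contradicting the hypothesis.
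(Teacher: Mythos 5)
Your treatment of the first two assertions matches the paper: invariance under $\diag(u,1)$, $u\in\mO^\times$, is checked directly, the converse theorem (Theorem \ref{maass-thm}) together with Proposition \ref{main-prop} gives $F_f\in\cM(\Gamma_T;r)$, Proposition \ref{group-gen-prop} upgrades this to $\GL_2(\mO)$, and cuspidality follows from the one-cusp statement of Lemma \ref{cusp-lem} plus the absence of a constant term. That part is fine.

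The non-vanishing argument, however, has a genuine gap, and it sits exactly at the step you dismiss as ``the standard relation between $c(n)$ and $c(-n)$ for Maass forms.'' No such relation exists for a general $f\in S(\Gamma_0(2);-(\frac14+\frac{r^2}{4}))$: only even (resp.\ odd) Maass forms satisfy $c(-n)=c(n)$ (resp.\ $c(-n)=-c(n)$), and $f$ is not assumed to lie in either eigenspace of $z\mapsto-\bar z$. Since the coefficients $A(\beta)$ are built exclusively from the negative-index coefficients $c(-m)$, the whole content of the non-vanishing statement is precisely that these cannot all vanish for $f\neq 0$. The paper isolates this as Lemma \ref{Maass-form-non-van-coeff} and proves it by a genuinely nontrivial argument: writing $f=f_1+f_2$ with $f_1$ even and $f_2$ odd, the hypothesis $c(-n)=0$ for all $n>0$ forces $L(s,f_1)=L(s,f_2)$, while the completed $L$-functions of $f_1$ and $f_2$ satisfy functional equations whose gamma factors differ by a shift of $1$ (here the common Atkin--Lehner eigenvalue is used); this is incompatible unless $L(s,f_1)\equiv 0$, whence $f=0$. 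Your proposal needs this lemma or a substitute for it, and the inversion you describe does not supply one. A secondary issue is that your route also leans on an unproved arithmetic input (primitive representations by the Hurwitz order). The paper sidesteps both the full inversion and the primitivity question: it takes $N_0$ \emph{minimal} with $c(-N_0)\neq 0$, picks any $\beta_0\in\mO$ with $|\beta_0|^2=N_0$ (four-square theorem), and observes that in $A(\varpi_2\beta_0)$ every term other than $\sqrt{2N_0}\,c(-N_0)$ involves $c(-M)$ with $M<N_0$, hence vanishes by minimality. I would recommend replacing your last paragraph by this minimality argument, preceded by a proof of the analogue of Lemma \ref{Maass-form-non-van-coeff}.
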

\begin{proof}
We can verify the left invariance of $F_f$ with respect to $\{
\begin{bmatrix}
u & 0\\
0 & 1
\end{bmatrix}\mid u\in\mO^{\times}\}$ in a straightforward way. 
Proposition \ref{group-gen-prop}, Theorem \ref{maass-thm} and Proposition \ref{main-prop} thus imply $F_f\in{\cal M}(GL_2(\mO);r)$. Since $GL_2(\mO)$ has only one cusp (see Lemma \ref{cusp-lem}), the Fourier expansion of $F_f$ means that $F_f$ is cuspidal. 
To show the non-vanishing we need the following lemma:

\begin{lem}\label{Maass-form-non-van-coeff}
Let $f\in S(\Gamma_0(2);-(\frac{1}{4}+\frac{r^2}{4}))$ with Fourier coefficients $c(n)$ and with eigenvalue $\epsilon$ of the Atkin Lehner involution. Then, there exist $N > 0, N \in \Z$, such that $c(-N) \neq 0$.
\end{lem}
\begin{proof}
Assume that $c(n) = 0$ for all $n < 0$. Set $f_1(z) = (f(z)+f(-\bar{z}))/2$ and $f_2(z) = (f(z)-f(-\bar{z}))/2$. Then, $f_1, f_2$ are elements of $S(\Gamma_0(2);-(\frac{1}{4}+\frac{r^2}{4}))$ with the same  eigenvalue $\epsilon$ of the Atkin Lehner involution as $f$. In addition, $f_1$ is an even Maass form and $f_2$ is an odd Maass form, with the property that they have the exact same Fourier coefficients corresponding to positive indices. This implies that the $L$-functions for $f_1$ and $f_2$ satisfy $L(s, f_1) = L(s, f_2)$. On the other hand, $L(s,f_1)$ and $L(s,f_2)$ satisfy functional equations with the gamma factors shifted by $1$. Here, we use that both $f_1$ and $f_2$ have the same Atkin Lehner eigenvalue. If $L(s, f_1) \neq 0$, we obtain an identity of gamma factors, which can be checked to be impossible. This gives us that $f$ has to be zero, a contradiction.
\end{proof}

Let $N_0$ be the smallest positive integer such that $c(-N_0)  \neq 0$. Let $\beta_0 \in \mO$ be such that $|\beta_0|^2 = N_0$. Choose, $\beta = \varpi_2 \beta_0$. Then, by the choice of $N_0$ and definition of $A(\beta)$, we see that $A(\beta) = \sqrt{2N_0} c(-N_0) \neq 0$, as required.
%Let $\beta_0=x_1+x_2i+x_3j+x_4ij\in\mO$ with $(x_1,x_2,x_3,x_4)\in\Z^4\setminus\{0\}$ whose greatest common divisor is one. Then $\varpi_2\beta_0\in S^{\rm{prim}}$.  We have assumed that $f$ is either an odd or even Maass form. The two lemmas imply that there is $c(-m)\not=0$ for some positive integer $m$ not divisible by $8$ and that we can take $\beta_0$ so that $m=|\beta_0|^2$ for such $m$. 
%We see that $A(\varpi_2\beta_0)=\sqrt{2}|\beta_0|c(-|\beta_0|^2)$ is non-zero with this $\beta_0$. As a result we have completed the proof.
\end{proof}
\begin{rem}\label{Existence-Maass-Cuspforms}
Weyl's law for congruence subgroups of $SL_2(\Z)$ by Selberg~(cf.~\cite[Section 11.1]{Iwn}) implies that there exist Maass cusp forms for $\Gamma_0(2)$.  
This and the theorem above imply the existence of non-zero lifts $F_f$.
\end{rem}
\section{Actions of Hecke operators on the lifting}\label{Hecke-action}
\subsection{Adelization of automorphic forms}\label{adel-section}
To study the actions of Hecke operators on our cusp forms constructed by the lifting we need both adelic and non-adelic treatments of automorphic forms. 

For a complex number $r\in\C$ we introduce another space $M(\cG(\A),r)$ of automorphic forms for $\cG$.
\begin{defn}
Let $M(\cG(\A),r)$ be the space of smooth functions $\Phi$ on $\cG(\A)$ satisfying the following conditions:
\begin{enumerate}
\item $\Phi(z\gamma gu_fu_{\infty})=\Phi(g)$ for any $(z,\gamma,g,u_f,u_{\infty})\in Z_{\A}\times\cG(\Q)\times\times\cG(\A)\times U\times K$, where $Z_{\A}$ denotes the center of $\cG(\A)$,
\item $\Omega\cdot\Phi(g_{\infty})=-(\displaystyle\frac{r^2}{4}+1)\Phi(g_{\infty})$ for any $g_{\infty}\in\cG(\R)=\GL_2(\Hl)$,
\item $\Phi$ is of moderate growth.
\end{enumerate}
\end{defn}
According to part 1) of Lemma \ref{cusp-lem}, the class number of $\cG$ with respect to $U$ is one, which means that $\cG(\A)=\cG(\Q)\cG(\R)U$. We can thus view $F\in M(\GL_2(\mO),r)$ as a smooth function $\Phi_F$ on $\cG(\A)$ by
\[
\Phi_F(\gamma g_{\infty}u_f)=\Phi_F(g_{\infty})\quad\forall(\gamma,g_{\infty},u_f)\in\cG(\Q)\times\cG(\R)\times U.
\]
We therefore see the following:
\begin{lem}\label{adelization}
We have an isomorphism $M(\GL_2(\mO),r)\simeq M(\cG(\A),r)$.
\end{lem}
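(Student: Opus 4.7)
The plan is to use part 1 of Lemma \ref{cusp-lem}, namely the class number one assertion $\cG(\A)=\cG(\Q)\cG(\R)U$, as the only nontrivial ingredient, after which the lemma reduces to matching conditions on the two sides.

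First I would construct the forward map $F\mapsto\Phi_F$. Given $F\in\cM(\GL_2(\mO),r)$ and any $g\in\cG(\A)$, the class number one decomposition lets me write $g=\gamma g_\infty u_f$ with $\gamma\in\cG(\Q)$, $g_\infty\in\cG(\R)$, $u_f\in U$; I then set $\Phi_F(g):=F(g_\infty)$. The main point to check is well-definedness. If $\gamma g_\infty u_f=\gamma'g'_\infty u'_f$, then comparing finite and infinite components after dividing both sides by the left-hand one shows that the element $\delta:=\gamma^{-1}\gamma'\in\cG(\Q)$ lies in $U$ at every finite place and equals $g_\infty(g'_\infty)^{-1}$ at the archimedean place. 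Because $\mO_p$ is the unique maximal order of $B_{\Q_p}$ for every finite $p$ and $\mO=B\cap\prod_p\mO_p$ under the diagonal embedding, the condition $\delta\in\cG(\Q)\cap U$ forces $\delta\in\GL_2(\mO)$. Then $g'_\infty=\delta^{-1}g_\infty$, so $F(g'_\infty)=F(g_\infty)$ follows from the left $\GL_2(\mO)$-invariance of $F$.

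Next I would verify that the four defining conditions for $M(\cG(\A),r)$ on $\Phi_F$ follow from those for $\cM(\GL_2(\mO),r)$ on $F$. Left $\cG(\Q)$- and right $U$-invariance are built into the definition. Right $K$-invariance at infinity, and left $Z_{\A}$-invariance, reduce to the corresponding properties of $F$ via the strong approximation $Z_{\A}=Z(\Q)Z^+(Z_{\A}\cap U)$, which is nothing but the fact that $\Q$ has class number one. The Casimir eigenvalue condition and the moderate growth condition are archimedean and hence are immediate from the definition. Linearity is clear, and injectivity follows by restricting to the archimedean factor.

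For surjectivity I would define the inverse by restriction: given $\Phi\in M(\cG(\A),r)$, set $F(g_\infty):=\Phi(g_\infty)$, viewing $g_\infty\in\cG(\R)$ as the adele which is trivial at all finite places. The $\cG(\Q)$-invariance of $\Phi$ on the left together with the $U$-invariance on the right then forces $F$ to be invariant under $\cG(\Q)\cap U=\GL_2(\mO)$, which is the only nontrivial piece; the remaining conditions (the Casimir equation, $Z^+$ and $K$ invariance, and moderate growth) descend directly. These two constructions are mutually inverse since $\Phi_F$ and $\Phi$ agree on $\cG(\R)$ and both are determined on $\cG(\A)$ by that restriction via $\cG(\A)=\cG(\Q)\cG(\R)U$.

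The only genuine obstacle is the identification $\cG(\Q)\cap\cG(\R)U=\GL_2(\mO)$ inside $\cG(\A)$, which is where the uniqueness of the maximal order $\mO$ of $B$ enters decisively; everything else is formal bookkeeping of invariance properties permitted by the class number one assertion of Lemma \ref{cusp-lem}.
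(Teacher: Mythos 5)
Your argument is correct and takes essentially the same route as the paper, which simply defines $\Phi_F(\gamma g_\infty u_f):=F(g_\infty)$ and cites the class-number-one statement of Lemma \ref{cusp-lem}; you have filled in the routine verifications (well-definedness via the identification of $\cG(\Q)\cap\cG(\R)U$ with $\GL_2(\mO)$, and the matching of the invariance, Casimir and growth conditions on both sides). One small inaccuracy: $\mO_p$ is the unique maximal order of $B\otimes_\Q\Q_p$ only at $p=2$ (for odd $p$ the algebra splits and maximal orders are merely all conjugate), but your argument really only uses the local--global identity $\mO=B\cap\prod_p\mO_p$, which holds for any order, so nothing is affected.
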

\subsection{Hecke operators}
For each place $p\le\infty$ let $G_p:=\GL_2(B_p)$ with $B_p=B\otimes_{\Q}{\Q_p}$. For a finite prime $p\not=2$, we have $\GL_2(B_p)\simeq \GL_4(\Q_p)$. Let $\mO_p$ be the $p$-adic completion of $\mO$ for $p<\infty$. For a finite prime  $p\not=2$, $\mO_p\simeq M_2(\Z_p)$ and $\GL_2(\mO_p)\simeq \GL_4(\Z_p)$. Set $K_p = \GL_2(\mO_p)$ for $p<\infty$.

We denote by $\Hp$ the Hecke algebra for $\GL_2(B_p)$ with respect to $\GL_2(\mO_p)$ for $p<\infty$. According to \cite[Section 8,~Theorem 6]{Sa}, $\Hp$ has the following generators:
\[
\begin{cases}
\{\varphi_1^{\pm 1},~\varphi_2\}& \text{ if } p=2,\\
\{\phi_1^{\pm 1},~\phi_2,~\phi_3,~\phi_4\}& \text{ if } p\not=2.
\end{cases}
\]
Here $\varphi_1,~\varphi_2$ denote the characteristic functions for
\begin{equation}\label{even-hecke-ops}
K_2
\begin{bmatrix}
\varpi_2 & 0\\
0 & \varpi_2
\end{bmatrix}K_2,~K_2
\begin{bmatrix}
\varpi_2 & 0\\
0 & 1
\end{bmatrix}K_2
\end{equation}
respectively, and $\phi_1,~\phi_2,~\phi_3,~\phi_4$ denote the characteristic functions for
\begin{equation}\label{odd-hecke-ops}
K_p
\begin{bmatrix}
p & & & \\
& p & & \\
& & p & \\
& & & p
\end{bmatrix}K_p,~K_p
\begin{bmatrix}
p & & & \\
& p & & \\
& & p & \\
& & & 1
\end{bmatrix}K_p,
~K_p
\begin{bmatrix}
p & & & \\
& p & & \\
& & 1 & \\
& & & 1
\end{bmatrix}K_p,~K_p
\begin{bmatrix}
p & & & \\
& 1 & & \\
& & 1 & \\
& & & 1
\end{bmatrix}K_p
\end{equation}
respectively when $p\not=2$. Recall that $\varpi_2$ denotes a prime element of $B_2$. We want to obtain the single coset decomposition for the above double cosets. For that, we next review the Bruhat decomposition of $K_p$ given by
\[
K_p=\bigsqcup_{w\in W_p}T_pwT_p,
\]
where $W_p$ denotes the Weyl group of $\GL_2(B_p)$, and $T_p$ the subgroup of elements in $K_p$ which are upper triangular modulo $p$. 

Let $N_p$ be the standard maximal unipotent subgroup of $GL_2(B_p)$ defined over $\Q_p$. 
For this we note that $N_p$ for an odd $p$ is not isomorphic to that for $p=2$. We put $N_p^0(\Z_p):=T_p\cap {}^tN_p(\Q_p)$. We furthermore introduce $N(\Z_p) = N_p(\Q_p)\cap K_p$ and $D(\Z_p) = D_p(\Q_p)\cap K_p$, 
where $D_p$ denotes the subgroup of diagonal matrices in $GL_2(B_p)$. Then we have
\[
T_p=N(\Z_p)D(\Z_p)N_p^0(\Z_p)
\]
(see \cite[Theorem 2.5]{Iwh-M}). Let $h$ be one of 
\[
1_4~(\text{or $1_2$}),~
\begin{bmatrix}
p & & & \\
& p & & \\
& & p & \\
& & & p
\end{bmatrix},~
\begin{bmatrix}
p & & & \\
& p & & \\
& & p & \\
& & & 1
\end{bmatrix},~
\begin{bmatrix}
p & & & \\
& p & & \\
& & 1 & \\
& & & 1
\end{bmatrix},~
\begin{bmatrix}
p & & & \\
& 1 & & \\
& & 1 & \\
& & & 1
\end{bmatrix}~\text{or}~ 
\begin{bmatrix}
\varpi_2 & \\
 & 1
\end{bmatrix}.
\]
\begin{lem}\label{coset-decomp-I}
\[
K_phK_p=\bigsqcup_{w\in W_p/W_p(h)}N(\Z_p)whK_p,
\]
where $W_p(h):=\{w\in W_p\mid whw^{-1}=h\}$. 
\end{lem}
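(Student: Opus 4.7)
The plan is to establish the decomposition in two steps: first, the covering
\[
K_p h K_p \subseteq \bigcup_{w \in W_p} N(\Z_p)\, w h K_p,
\]
and then disjointness as $w$ ranges over $W_p / W_p(h)$. The reverse inclusion is immediate since $N(\Z_p)$ and the Weyl representatives both lie in $K_p$.

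For the covering, I would start from the Bruhat decomposition $K_p = \bigsqcup_{w \in W_p} T_p w T_p$ recalled in the setup, which yields $K_p h K_p = \bigcup_w T_p w T_p h K_p$. The crucial observation is the contraction identity
\[
h^{-1} N_p^0(\Z_p)\, h \subseteq K_p,
\]
which I would verify by a direct entry-wise calculation: writing $h = \diag(p^{a_1}, \dots, p^{a_n})$ with $a_1 \ge \cdots \ge a_n$ (true for each $h$ in the given list), the $(i,j)$-entry of $h^{-1} n h$ is $p^{a_j - a_i} n_{ij}$, and for $i>j$ the exponent is non-negative while $n_{ij} \in p\Z_p$, placing the result in $\Z_p$. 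Combined with $D(\Z_p) \subseteq K_p$ commuting with $h$ and the Iwahori factorization $T_p = N(\Z_p) D(\Z_p) N_p^0(\Z_p)$, this gives $T_p h K_p = N(\Z_p) h K_p$, so $K_p h K_p = \bigcup_w T_p w N(\Z_p) h K_p$. I would then rewrite $T_p w N(\Z_p) = T_p\,(w N(\Z_p) w^{-1})\, w$ and split $w N(\Z_p) w^{-1}$ into its components in $N_p$ and in the opposite unipotent $\bar N_p$ according to which positive roots $w$ sends to positive versus negative. The component in $N_p$ is absorbed into $T_p$ on the left, while the component in $\bar N_p$ is absorbed into $K_p$ on the right of $wh$ by the same contraction argument applied to $wh$ in place of $h$. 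Each piece thus reduces to $N(\Z_p) w h K_p$, completing the covering.

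For the disjointness, suppose $n_1 w_1 h K_p = n_2 w_2 h K_p$ with $n_1, n_2 \in N(\Z_p)$ and $w_1, w_2 \in W_p$. I would apply the Iwasawa decomposition $G_p = N_p(\Q_p) D_p(\Q_p) K_p$ to both sides. The torus component (well-defined modulo $D(\Z_p) = D_p(\Q_p) \cap K_p$) of $n_i w_i h$ is $w_i h w_i^{-1}$, because $n_i \in N_p$ does not affect it and a direct row reduction shows that the Iwasawa torus component of $wh$ is $w h w^{-1}$. Hence $w_1 h w_1^{-1} \equiv w_2 h w_2^{-1} \pmod{D(\Z_p)}$; since both sides are diagonal with integer powers of $p$, the congruence is actually an equality, forcing $w_2^{-1} w_1 \in W_p(h)$, i.e., $w_1 W_p(h) = w_2 W_p(h)$.

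The hardest step will be the root-subgroup bookkeeping in the covering argument, namely ensuring that the $\bar N_p$-component of $w N(\Z_p) w^{-1}$ can indeed be pushed through $wh$ into $K_p$. Fortunately, the $h$'s appearing in the lemma are all block-scalar diagonals, so $W_p(h)$ is always a Young (standard parabolic) subgroup of $W_p$ fixing the blocks of equal-valuation entries of $h$, and the required absorption reduces to a routine verification. For $p \neq 2$, where $\GL_2(B_p) \cong \GL_4(\Q_p)$, it is a special case of the classical $\GL_n$ Hecke coset decomposition; for $p=2$, the finite Weyl group $W_2$ has order two, making the case analysis essentially a $\GL_2$-type computation.
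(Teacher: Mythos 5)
The paper actually states Lemma \ref{coset-decomp-I} without proof, so your argument has to stand entirely on its own. The easy inclusion and the disjointness step are fine: the Iwasawa decomposition does show that $N(\Z_p)whK_p\subseteq N_p\,(whw^{-1})K_p$ and that these Iwasawa pieces are distinct unless $whw^{-1}=w'h(w')^{-1}$, i.e.\ unless $w^{-1}w'\in W_p(h)$. The genuine gap is in the covering step, in the sentence claiming that the $\bar N_p$-component $V^-$ of $wN(\Z_p)w^{-1}$ ``is absorbed into $K_p$ on the right of $wh$ by the same contraction argument.'' That absorption requires $(wh)^{-1}V^-(wh)\subseteq K_p$. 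But $(wh)^{-1}V^-(wh)=h^{-1}\bigl(w^{-1}V^-w\bigr)h$, and $w^{-1}V^-w$ is a product of \emph{upper} root subgroups $U_\alpha(\Z_p)$ (those positive $\alpha$ with $w\alpha<0$); conjugating an upper entry by $h^{-1}(\cdot)h$ with $h=\diag(p^{a_1},\dots,p^{a_n})$, $a_1\ge\cdots\ge a_n$, multiplies the $(i,j)$-entry ($i<j$) by $p^{a_j-a_i}$ with $a_j-a_i\le 0$, so it \emph{introduces} denominators rather than removing them. Your contraction identity applies to lower-triangular positions, and to $N_p^0(\Z_p)$ whose entries lie in $p\Z_p$; it does not apply to $V^-$, whose entries are only in $\Z_p$ and which, after conjugation by $w^{-1}$, sits in the upper unipotent.

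The intermediate claim ``each piece thus reduces to $N(\Z_p)whK_p$'' is in fact false, not just unproved. Already for $\GL_2(\Q_p)$ with $h=\diag(p,1)$ and $w$ the transposition, the element $\bigl(\begin{smallmatrix}1&0\\1&1\end{smallmatrix}\bigr)h=\bigl(\begin{smallmatrix}p&0\\p&1\end{smallmatrix}\bigr)$ lies in $T_pwT_phK_p$ (the $(2,1)$-entry of $\bigl(\begin{smallmatrix}1&0\\1&1\end{smallmatrix}\bigr)$ is a unit, so it is in the big Bruhat cell), yet $\bigl(\begin{smallmatrix}p&0\\p&1\end{smallmatrix}\bigr)=h\cdot\bigl(\begin{smallmatrix}1&0\\p&1\end{smallmatrix}\bigr)\in hK_p\subseteq N(\Z_p)hK_p$, which is disjoint from $N(\Z_p)whK_p=\diag(1,p)K_p$. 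So $T_pwT_phK_p$ spills into the pieces indexed by other cosets of $W_p(h)$; the full union identity survives, but your argument does not establish that every element of $K_phK_p$ lands in \emph{some} $N(\Z_p)w'hK_p$. Note also that nothing in your covering argument uses that the listed $h$ are minuscule (all exponents in $\{0,1\}$), yet the lemma is false without that hypothesis: for $h=\diag(p^2,1)$ in $\GL_2$ the right-hand side has $p^2+1$ cosets while $K_phK_p$ has $p^2+p$. A correct route must use minusculeness, e.g.: for $g\in K_phK_p$ one has $p\Z_p^4\subseteq g\Z_p^4\subseteq\Z_p^4$ (resp.\ $\varpi_2\mO_2^2\subseteq g\mO_2^2\subseteq\mO_2^2$ when $p=2$), whence column reduction over $K_p$ brings $g$ to upper-triangular form with diagonal $whw^{-1}$ for some $w$ and integral unipotent part, giving $g\in N(\Z_p)whw^{-1}K_p=N(\Z_p)whK_p$ directly.
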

To describe this coset decomposition of $K_phK_p$ explicitly we need a set of representatives for $W_p/W_p(h)$.
\begin{lem}\label{Weyl-gp-decomp}
\begin{enumerate}
\item Let $p=2$. For $h=\begin{bmatrix}
\varpi_2 & 0\\
0 & 1
\end{bmatrix}$ we can take
\[
\left\{1_2,~
\begin{bmatrix}
0 & 1\\
1 & 0
\end{bmatrix}\right\}
\]
as a set of representatives for $W_2/W_2(h)$.
\item Let $p\not=2$.
\begin{enumerate}
\item When $h=\begin{bmatrix}
p & & & \\
& p & & \\
& & p & \\
& & & 1
\end{bmatrix}$ we can take
\[
\left\{
1_4,~
\begin{bmatrix}
0 & 0 & 0 & 1\\
0 & 1 & 0 & 0\\
0 & 0 & 1 & 0\\
1 & 0 & 0 & 0
\end{bmatrix},~
\begin{bmatrix}
1 & 0 & 0 & 0\\
0 & 0 & 0 & 1\\
0 & 0 & 1 & 0\\
0 & 1 & 0 & 0
\end{bmatrix},~
\begin{bmatrix}
1 & 0 & 0 & 0\\
0 & 1 & 0 & 0\\
0 & 0 & 0 & 1\\
0 & 0 & 1 & 0
\end{bmatrix}
\right\}
\]
as a set of representatives for $W_p/W_p(h)$.
\item When $h=\begin{bmatrix}
p & & & \\
& 1 & & \\
& & 1 & \\
& & & 1
\end{bmatrix}$ we can take
\[
\left\{
1_4,~\begin{bmatrix}
0 & 0 & 0 & 1\\
0 & 1 & 0 & 0\\
0 & 0 & 1 & 0\\
1 & 0 & 0 & 0
\end{bmatrix},~
\begin{bmatrix}
0 & 0 & 1 & 0\\
0 & 1 & 0 & 0\\
1 & 0 & 0 & 0\\
0 & 0 & 0 & 1
\end{bmatrix},~
\begin{bmatrix}
0 & 1 & 0 & 0\\
1 & 0 & 0 & 0\\
0 & 0 & 1 & 0\\
0 & 0 & 0 & 1
\end{bmatrix}
\right\}
\]
as a set of representatives for $W_p/W_p(h)$.
\item When $h=\begin{bmatrix}
p & & & \\
& p & & \\
& & 1 & \\
& & & 1
\end{bmatrix}$ we can take
\begin{align*}
&\Big\{1_4,~\begin{bmatrix}
0 & 0 & 1 & 0\\
0 & 1 & 0 & 0\\
1 & 0 & 0 & 0\\
0 & 0 & 0 & 1
\end{bmatrix},~
\begin{bmatrix}
0 & 0 & 0 & 1\\
0 & 1 & 0 & 0\\
0 & 0 & 1 & 0\\
1 & 0 & 0 & 0
\end{bmatrix},~
\begin{bmatrix}
1 & 0 & 0 & 0\\
0 & 0 & 1 & 0\\
0 & 1 & 0 & 0\\
0 & 0 & 0 & 1
\end{bmatrix},~
\begin{bmatrix}
1 & 0 & 0 & 0\\
0 & 0 & 0 & 1\\
0 & 0 & 1 & 0\\
0 & 1 & 0 & 0
\end{bmatrix},\\
& \qquad \qquad 
\begin{bmatrix}
0 & 0 & 1 & 0\\
0 & 0 & 0 & 1\\
1 & 0 & 0 & 0\\
0 & 1 & 0 & 0
\end{bmatrix}
\Big\}
\end{align*}
as a set of representatives for $W_p/W_p(h)$.
\end{enumerate}
\end{enumerate}
\end{lem}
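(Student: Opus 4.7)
The plan is a case-by-case direct verification: in each case identify the Weyl group $W_p$ concretely, compute the stabilizer $W_p(h)$ by a conjugation calculation on diagonal entries, count cosets, and check that the listed matrices lie in distinct cosets by inspecting the conjugates $whw^{-1}$.

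For part (1), the Weyl group of $\GL_2(B_2)$ with respect to the diagonal maximal torus is of order two, generated by $w=\begin{bmatrix}0&1\\1&0\end{bmatrix}$. A one-line computation gives $whw^{-1}=\begin{bmatrix}1&0\\0&\varpi_2\end{bmatrix}\neq h$, so the stabilizer $W_2(h)$ is trivial and the whole group $\{1_2,w\}$ constitutes a system of coset representatives.

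For part (2), under the isomorphism $\GL_2(B_p)\simeq\GL_4(\Q_p)$ at an odd prime $p$, the Weyl group $W_p$ is $S_4$ realized by $4\times 4$ permutation matrices, and conjugation by $w\in W_p$ permutes the diagonal entries of a torus element via $w\,\mathrm{diag}(a_1,a_2,a_3,a_4)\,w^{-1}=\mathrm{diag}(a_{w^{-1}(1)},\dots,a_{w^{-1}(4)})$. Hence $W_p(h)$ is precisely the subgroup of $S_4$ preserving the partition of $\{1,2,3,4\}$ into level sets of the diagonal function $i\mapsto h_{ii}$. This gives $W_p(h)\simeq S_3$ of order $6$ (hence index $4$) in cases (a) and (b), and $W_p(h)\simeq S_2\times S_2$ of order $4$ (hence index $6$) in case (c). The cosets $wW_p(h)$ are therefore in bijection with the distinct orderings of the multiset of diagonal entries, i.e.\ with the choice of position for the unique exceptional entry in (a) and (b) (four choices each) and with the choice of the size-$2$ subset of positions carrying $p$ in (c) (which is $\binom{4}{2}=6$ choices).

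It then remains to check that the matrices listed in the lemma achieve each of these configurations exactly once. For (a), the four matrices are $1_4$ together with the transpositions $(1\,4),(2\,4),(3\,4)$, which place the entry $1$ at positions $4,1,2,3$ respectively. For (b), the four matrices are $1_4,(1\,4),(1\,3),(1\,2)$, placing the entry $p$ at positions $1,4,3,2$. For (c), the six listed matrices correspond to the permutations $1,(1\,3),(1\,4),(2\,3),(2\,4),(1\,3)(2\,4)$, which produce the six diagonal matrices with $p$'s in the position sets $\{1,2\},\{2,3\},\{2,4\},\{1,3\},\{1,4\},\{3,4\}$ respectively—all $\binom{4}{2}$ two-element subsets of $\{1,2,3,4\}$. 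In each case the number of matrices matches the index $[W_p:W_p(h)]$ and they produce pairwise distinct conjugates of $h$, so they form a complete set of coset representatives.

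There is no substantial obstacle: the work is purely combinatorial, and the only place to be careful is the bookkeeping needed to identify each listed $4\times 4$ permutation matrix with the correct element of $S_4$ and to verify that the resulting six position sets in case (c) exhaust $\binom{\{1,2,3,4\}}{2}$.
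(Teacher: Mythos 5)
Your verification is correct: identifying $W_p$ with $S_4$ (resp.\ $S_2$ for $p=2$) acting on diagonal entries by conjugation, computing $W_p(h)$ as the stabilizer of the level sets of $i\mapsto h_{ii}$, and checking that the listed permutation matrices realize each distinct conjugate $whw^{-1}$ exactly once is exactly the right argument, and your bookkeeping in all four cases (including the six position sets $\{1,2\},\{2,3\},\{2,4\},\{1,3\},\{1,4\},\{3,4\}$ in case (c)) checks out. The paper states this lemma without proof, so there is nothing to compare against; your write-up simply supplies the routine verification the authors omitted.
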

By Lemma \ref{coset-decomp-I} and Lemma \ref{Weyl-gp-decomp} we are now able to write down the coset decomposition of $K_phK_p$ explicitly.
\begin{lem}\label{Gl4-double-cos-lem}
\begin{enumerate}
\item Let $p=2$ and $h=
\begin{bmatrix}
\varpi_2 & 0\\
0 & 1
\end{bmatrix}$. We have
\[
K_2hK_2=
\begin{bmatrix}
1 & 0\\
0 & \varpi_2
\end{bmatrix}K_2\sqcup\bigsqcup_{x\in\mO_2/\varpi_2\mO_2}
\begin{bmatrix}
\varpi_2 & 0\\
0 & 1
\end{bmatrix}
\begin{bmatrix}
1 & \varpi_2^{-1}x\\
0 & 1
\end{bmatrix}K_2.
\]
\item Let $p\not=2$.
\begin{enumerate} 
\item For $h=
\begin{bmatrix}
p & & & \\
& p & & \\
& & p & \\
& & & 1
\end{bmatrix}$ we have
\begin{align*}
K_phK_p&=\bigsqcup_{x_{14},x_{24},x_{34}\in\Z_p/p\Z_p}
\begin{bmatrix}
p & & & \\
& p & & \\
& & p & \\
& & & 1
\end{bmatrix}
\begin{bmatrix}
1 & & & p^{-1}x_{14}\\
& 1 & & p^{-1}x_{24}\\
& & 1 & p^{-1}x_{34}\\
& & & 1
\end{bmatrix}K_p\\
&\sqcup\bigsqcup_{x_{12}\in\Z_p/p\Z_p}
\begin{bmatrix}
p & & & \\
& 1 & & \\
& & p & \\
& & & p
\end{bmatrix}
\begin{bmatrix}
1 & p^{-1}x_{12} & & \\
  & 1 & & \\
  & & 1 & \\
  & & & 1
\end{bmatrix}K_p\\
&\sqcup\bigsqcup_{x_{13},x_{23}\in\Z_p/p\Z_p}
\begin{bmatrix}
p & & & \\
& p & & \\
& & 1 & \\
& & & p
\end{bmatrix}
\begin{bmatrix}
1 & & p^{-1}x_{13} & \\
  & 1 & p^{-1}x_{23} & \\
  & & 1 &\\
  & & & 1
\end{bmatrix}K_p\sqcup
\begin{bmatrix}
1 & & & \\
  & p & & \\
  & & p & \\
  & & & p
\end{bmatrix}K_p.
\end{align*}
\item For $h=
\begin{bmatrix}
p & & & \\
& 1 & & \\
& & 1 & \\
& & & 1
\end{bmatrix}$ we have
\begin{align*}
K_phK_p&=\bigsqcup_{x_{12},x_{13},x_{14}\in\Z_p/p\Z_p}
\begin{bmatrix}
p & & & \\
& 1 & & \\
& & 1 & \\
& & & 1
\end{bmatrix}
\begin{bmatrix}
1 & p^{-1}x_{12} & p^{-1}x_{13} & p^{-1}x_{14} \\
  & 1 & & \\
  & & 1 & \\
  & & & 1
\end{bmatrix}K_p\\
&\sqcup\bigsqcup_{x_{34}\in\Z_p/p\Z_p}
\begin{bmatrix}
1 & & & \\
& 1 & & \\
& & p & \\
& & & 1
\end{bmatrix}
\begin{bmatrix}
1 & & & \\
& 1 & & \\
& & 1 & p^{-1}x_{34} \\
& & & 1
\end{bmatrix}K_p\\
&\sqcup\bigsqcup_{x_{23},x_{24}\in\Z_p/p\Z_p}
\begin{bmatrix}
1 & & & \\
& p & & \\
& & 1 & \\
& & & 1
\end{bmatrix}
\begin{bmatrix}
1 & & & \\
& 1 & p^{-1}x_{23} & p^{-1}x_{24}\\
& & 1 & \\
& & & 1
\end{bmatrix}K_p \\
& \qquad \sqcup
\begin{bmatrix}
1 & & & \\
& 1 & & \\
& & 1 & \\
& & & p
\end{bmatrix}K_p.
\end{align*}
\item For $h=
\begin{bmatrix}
p & & & \\
& p & & \\
& & 1 & \\
& & & 1
\end{bmatrix}$ we have
\begin{align*}
K_phK_p&=\bigsqcup_{x_{13},x_{14},x_{23},x_{24}\in\Z_p/p\Z_p}
\begin{bmatrix}
p & & & \\
& p & & \\
& & 1 & \\
& & & 1
\end{bmatrix}
\begin{bmatrix}
1 & & p^{-1}x_{13} & p^{-1}x_{14}\\
& 1 & p^{-1}x_{23} & p^{-1}x_{24} \\
& & 1 & \\
& & & 1
\end{bmatrix}K_p\\
&\sqcup\bigsqcup_{x_{24},x_{34}\in\Z_p/p\Z_p}
\begin{bmatrix}
1 & & & \\
& p & & \\
& & p & \\
& & & 1
\end{bmatrix}
\begin{bmatrix}
1 & & & \\
& 1 & & p^{-1}x_{24}\\
& & 1 & p^{-1}x_{34} \\
& & & 1
\end{bmatrix}K_p\\
&\sqcup\bigsqcup_{x_{23}\in\Z_p/p\Z_p}
\begin{bmatrix}
1 & & & \\
& p & & \\
& & 1 & \\
& & & p
\end{bmatrix}
\begin{bmatrix}
1 & & & \\
& 1 & p^{-1}x_{23} & \\
& & 1 & \\
& & & 1
\end{bmatrix}K_p\\
&\sqcup\bigsqcup_{x_{12},x_{14},x_{34}\in\Z_p/p\Z_p}
\begin{bmatrix}
p & & & \\
& 1 & & \\
& & p & \\
& & & 1
\end{bmatrix}
\begin{bmatrix}
1 & p^{-1}x_{12} & & p^{-1}x_{14} \\
& 1 & & \\
& & 1 & p^{-1}x_{34} \\
& & & 1
\end{bmatrix}K_p\\
&\sqcup\bigsqcup_{x_{12},x_{13}\in\Z_p/p\Z_p}
\begin{bmatrix}
p & & & \\
& 1 & & \\
& & 1 & \\
& & & p
\end{bmatrix}
\begin{bmatrix}
1 & p^{-1}x_{12} & p^{-1}x_{13} & \\
& 1 & & \\
& & 1 & \\
& & & 1
\end{bmatrix}K_p \\
& \qquad \sqcup
\begin{bmatrix}
1 & & & \\
& 1 & & \\
& & p & \\
& & & p
\end{bmatrix}K_p.
\end{align*}
\end{enumerate}
\end{enumerate}
\end{lem}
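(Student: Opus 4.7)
The plan is to combine Lemma \ref{coset-decomp-I} and Lemma \ref{Weyl-gp-decomp} and then render each summand $N(\Z_p) w h K_p$ explicit by a conjugation computation. First, for each Weyl representative $w$ listed in Lemma \ref{Weyl-gp-decomp}, I would compute the product $wh$. Since $w$ is a permutation matrix and $h$ is diagonal with entries that are powers of a uniformizer, $wh$ is a monomial matrix, and I can absorb the permutation part into $K_p$ on the right to obtain $wh = h_w \cdot k$ with $k \in K_p$ and $h_w$ diagonal of the form $\mathrm{diag}(p^{a_1(w)}, \ldots, p^{a_n(w)})$, where $n = 4$ for odd $p$ and $n = 2$ for $p = 2$. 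This $h_w$ becomes the ``leading diagonal factor'' appearing in each summand of the stated decomposition; for example, at $p=2$ with $w=\mat{0}{1}{1}{0}$ and $h=\mat{\varpi_2}{0}{0}{1}$, one obtains $h_w=\mat{1}{0}{0}{\varpi_2}$, matching the first coset in part 1.

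Second, I would rewrite
\[
N(\Z_p)\, wh\, K_p \;=\; h_w \bigl(h_w^{-1} N(\Z_p) h_w\bigr) K_p
\]
and analyze $h_w^{-1} N(\Z_p) h_w$ modulo its intersection with $K_p$. Conjugation by $h_w^{-1}$ sends the $(i,j)$-entry $x_{ij}$, $i<j$, of a unipotent element to $p^{a_j(w) - a_i(w)} x_{ij}$ (at $p=2$ one uses $\varpi_2\mO_2 = \mO_2\varpi_2$ to write the corresponding $\varpi_2^{a_j(w) - a_i(w)} x$). Positions with non-negative exponent give integral entries and hence lie in $N(\Z_p)\subseteq K_p$, so they can be absorbed into the $K_p$ on the right without changing the coset. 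Positions with exponent $-1$ give representatives in $p^{-1}\Z_p/\Z_p$, respectively $\varpi_2^{-1}\mO_2/\mO_2 \cong \mO_2/\varpi_2\mO_2$ (of cardinality $2$), producing exactly the free parameters $x_{ij}$ appearing in the statement.

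The remaining work is combinatorial bookkeeping: for each choice of $h$ and each Weyl coset representative $w$, one verifies that the positions $(i,j)$ with $a_j(w) - a_i(w) = -1$ coincide with the positions in which the parameters appear in the claimed formula. For instance, at odd $p$ with $h = \mathrm{diag}(p,p,p,1)$ and $w = 1_4$, the exponent tuple is $(1,1,1,0)$ and the free positions are exactly $(1,4), (2,4), (3,4)$, matching the first summand of 2(a); the other three Weyl cosets produce, after absorbing a permutation into $K_p$, the diagonal factors $\mathrm{diag}(p,1,p,p)$, $\mathrm{diag}(p,p,1,p)$ and $\mathrm{diag}(1,p,p,p)$, whose free positions correspond exactly to the remaining summands. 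The cases $h = \mathrm{diag}(p,1,1,1)$ and $h = \mathrm{diag}(p,p,1,1)$ are treated identically, with four and six Weyl cosets respectively, and the quaternionic case $p = 2$ is handled by the same conjugation calculation using the uniformizer $\varpi_2$. The main obstacle is thus not conceptual but organizational—keeping track of which off-diagonal slots become free for each $(h,w)$ pair across roughly a dozen cases—and once the ``conjugation by $h_w$'' viewpoint is set up, each case reduces to a short verification.
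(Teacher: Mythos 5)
Your proposal is correct and is essentially the paper's own argument: the paper gives no written proof of this lemma beyond the sentence pointing to Lemma \ref{coset-decomp-I} and Lemma \ref{Weyl-gp-decomp}, and the conjugation computation $N(\Z_p)whK_p = h_w\,(h_w^{-1}N(\Z_p)h_w)\,K_p$ with $h_w = whw^{-1}$, followed by sorting the positions $(i,j)$ according to the sign of $a_j(w)-a_i(w)$, is exactly the bookkeeping the authors leave implicit. One small correction: $\mO_2/\varpi_2\mO_2$ is the residue field $\mathbb{F}_4$ of the quaternion division algebra over $\Q_2$, so it has four elements rather than two; this does not affect the validity of the decomposition in part 1, but it is the source of the factor $2^2$ in the subsequent Hecke-action formulas at $p=2$, so the count should be fixed.
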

We can now describe the actions of Hecke operators defined by $K_phK_p$'s above. With the invariant measure $dx$ of $G_p$ normalized so that $\displaystyle\int_{K_p}dx=1$, we define $K_phK_p\cdot\Phi$ by
\[
(K_phK_p\cdot\Phi)(g):=\displaystyle\int_{G_p}{\ch_{K_phK_p}}(x)\Phi(gx)dx
\]
for $\Phi\in M(\cG(\A),r)$, where $\ch_{K_phK_p}$ denotes the characteristic function for $K_phK_p$. 

We provide the non-adelic description of $K_phK_p\cdot\Phi$, which enables us to describe explicitly the influence of the $K_phK_p$-action on Fourier coefficients of the lifting $F_f$. To this end we need the two following lemmas.
\begin{lem}\label{N_p-invariance}
Let $\Phi\in M(\cG(\A),r)$ be a cusp form. 
For all finite prime $p$, $\Phi$ satisfies
\[
\Phi(n_pg)=\Phi(g)
\]
for any $(n_p,g)\in N_p\times\cG(A)$, where we view $N_p$ as a subgroup of $\cG(\A)$.
\end{lem}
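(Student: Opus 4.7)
The proof will use strong approximation for the unipotent radical $N$ together with Lemma \ref{cusp-lem}(1) and the invariance properties of $\Phi$. Since $N$ is isomorphic as a $\Q$-variety to the affine space $B$, the subring $\mO[1/p] \subset B$ is dense in $B_p$: for any $\beta \in B_p$ one can write $\beta = \sum_i e_i c_i$ in a $\Z$-basis of $\mO$, approximate each $c_i \in \Q_p$ by $d_i \in \Z[1/p]$ modulo $\Z_p$, and set $\alpha = \sum_i e_i d_i \in \mO[1/p]$. Given $n_p = n(\beta) \in N_p$, viewed as an adelic element supported at the $p$-component, I would choose such an $\alpha \in \mO[1/p]$ with $\alpha \equiv \beta \pmod{\mO_p}$ and set $\gamma := n(\alpha) \in N(\Q) \subset \cG(\Q)$.

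The adelic element $\gamma^{-1} n_p$ then has integral components at every finite place: $n(\beta - \alpha) \in N(\mO_p) \subset K_p$ at $p$; $n(-\alpha) \in N(\mO_q) \subset K_q$ at every finite $q \ne p$ (by the integrality of $\alpha$ away from $p$); and $n(-\alpha) \in N(\R)$ at infinity. Applying the left $\gamma$-invariance of $\Phi$, we have $\Phi(n_p g) = \Phi(\gamma^{-1} n_p \cdot g)$. Next, I would decompose $g = \gamma_0\, g_\infty\, u_0$ by Lemma \ref{cusp-lem}(1), use right $U$-invariance of $\Phi$ to absorb the compact finite-adelic part of $\gamma^{-1} n_p \cdot g$, and thereby reduce the identity to the classical statement
\[
F\big(n(-\alpha) g_\infty\big) = F(g_\infty) \quad \text{for all } g_\infty \in \cG(\R).
\]

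The main obstacle is that $\alpha$ need not lie in $\mO$, so this classical invariance does not follow directly from the $\GL_2(\mO)$-invariance of $F$. This is where the cuspidality of $\Phi$ enters essentially: by Proposition \ref{four-exp-prop}, the Fourier expansion of a cusp form has no constant term and is supported on $\beta \in S \setminus \{0\}$. Inserting the Fourier expansion and computing the ratio $F(n(-\alpha)g_\infty)/F(g_\infty)$ term-by-term produces factors $e^{-2\pi\sqrt{-1}\real(\beta\alpha)}$; one then exploits the explicit structure of the lift---in particular that the Fourier coefficients $A(\beta)$ of \eqref{Abeta-defn} vanish unless the relevant pairings $\real(\beta\alpha)$ lie in $\Z$ for the relevant class of $\alpha \in \mO[1/p]$---together with cuspidality, to obtain the required translation invariance and hence complete the proof.
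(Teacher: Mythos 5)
Your proposal takes a genuinely different route from the paper, and it contains two gaps that I do not think can be closed. First, you have misidentified the group $N_p$. The paper defines $N_p$ as the standard maximal unipotent subgroup of $\GL_2(B_p)$ \emph{over $\Q_p$} and explicitly notes that $N_p$ for odd $p$ is not isomorphic to the one at $p=2$: since $B_p\simeq M_2(\Q_p)$ for odd $p$, the group $N_p$ is the full six\mbox{-}dimensional upper-triangular unipotent subgroup of $\GL_4(\Q_p)$, not the four-dimensional group $\{n(\beta)\mid\beta\in B_p\}$ that your argument treats as ``isomorphic as a $\Q$-variety to $B$''. That larger group is not the group of $\Q_p$-points of any $\Q$-subgroup of $\cG$ (the division algebra $B$ has no nonzero nilpotent elements, so $N_p\cap\cG(\Q)$ is trivial), and consequently there is no rational element $\gamma=n(\alpha)$ to approximate with: the strong-approximation step is unavailable for most of $N_p$, including the unipotent directions inside the Levi $\GL_2\times\GL_2$ that actually occur in the coset decompositions of Lemma \ref{Gl4-double-cos-lem}. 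Second, even on the subgroup $\{n(\beta)\mid\beta\in B_p\}$ your concluding step cannot be repaired. If the reduction worked for every $\alpha\in\mO[1/p]$, it would establish invariance of $\Phi$ under all of $n(B_p)$; averaging the expansion of Proposition \ref{four-exp-prop} (whose constant term vanishes by cuspidality) over $n(p^{-N}\mO_p)$ and letting $N\to\infty$ would then force $\Phi\equiv 0$. In particular the asserted vanishing of $A(\beta)$ unless $\real(\beta\alpha)\in\Z$ for the relevant $\alpha\in\mO[1/p]$ is false (the nonzero coefficient $A(\varpi_2\beta_0)$ produced in the proof of Theorem \ref{lift-thm} has no such property), and in any case the lemma is asserted for an arbitrary cusp form $\Phi\in M(\cG(\A),r)$, so an appeal to the explicit shape of the lift is not admissible.

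The paper's own proof is entirely representation-theoretic: the cuspidal representation generated by $\Phi$ is a finite sum of irreducibles, each a restricted tensor product of local representations; right $K_p$-invariance forces every non-archimedean component to be spherical, hence a constituent of an unramified (or spherical) principal series; and the left $N_p$-invariance is then read off from the induced model, where the inducing character times $\delta^{1/2}$ is trivial on the unipotent radical. If you prefer a hands-on argument, note that the Hecke computations only ever use unipotent elements with entries in $\Z_p$, i.e.\ elements of $N_p\cap K_p$; for those one can lift to global integral matrices and invoke the left $\GL_2(\mO)$-invariance of $F$ together with Lemma \ref{cusp-lem}, which is essentially the workable core of your idea --- but it proves a weaker statement than the lemma as written, and it is not the paper's argument.
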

\begin{proof}
The $\cG(\A)$-module generated by $\Phi$ is a finite sum of irreducible cuspidal representations of $\cG(\A)$. As is well-known, an irreducible automorphic representation of $\cG(\A)$ decomposes into a restricted tensor product of irreducible admissible representations of $G_v=\cG(\Q_v)$ over all places $v$. Since $\Phi$ is right $K_p$-invariant for any finite prime $p$, every non-archimedean local component of an irreducible summand for the cuspidal representation generated by $\Phi$ is an unramified principal series representation at an odd prime $p$ and a spherical principal representation at the even prime $p=2$~(see \cite[4.4]{Ca}). The statement is a consequence of the left $N_p$-invariance of such irreducible admissible representations of $G_p$ for $p<\infty$.
\end{proof}

We now introduce the set
\begin{equation}\label{Cp-defn}
C_p:=\{\alpha\in\mO\mid \nu(\alpha)=p\}/\mO^{\times},\quad C'_p:=\{x\in M_2(\Z_p)\mid\det(x)=p\}/GL_2(\Z_p).
\end{equation}
\begin{lem}\label{Cp-lem}
\begin{enumerate}
\item There is a bijection
\[
C'_p\simeq\left\{\left.
\begin{bmatrix}
1 & 0\\
0 & p
\end{bmatrix},~
\begin{bmatrix}
p & b\\
0 & 1
\end{bmatrix}~\right|~b\in\Z/p\Z\right\}.
\]
\item For an odd prime $p$ the isomorphism $\mO_p\simeq M_2(\Z_p)$ induces the bijection
\[
C_p\simeq C'_p.
\]
\end{enumerate}
\end{lem}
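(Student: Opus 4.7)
For Part 1 I would use the standard bijection between right $\GL_2(\Z_p)$-cosets in $\{x \in M_2(\Z_p) \mid \det x = p\}$ and $\Z_p$-submodules $L \subset \Z_p^2$ of index $p$, given by $x \mapsto x\cdot\Z_p^2$ (the column span, which is preserved under right multiplication by $\GL_2(\Z_p)$). Since any such $L$ contains $p\Z_p^2$, these submodules are parametrized by the $1$-dimensional subspaces of $\mathbb{F}_p^2 := \Z_p^2/p\Z_p^2$, giving $p+1$ cosets. Direct computation shows the column span of $\begin{bmatrix}1 & 0\\ 0 & p\end{bmatrix}$ is $\Z_p \oplus p\Z_p$ while that of $\begin{bmatrix}p & b\\ 0 & 1\end{bmatrix}$ is $\{(x,y) \in \Z_p^2 \mid x \equiv by \pmod{p}\}$; as $b$ ranges over $\Z/p\Z$ these sublattices are pairwise distinct and exhaust the $p+1$ possibilities, yielding the claimed set of representatives.

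For Part 2, the key input is that for odd $p$ the quaternion algebra $B$ is unramified at $p$, so the isomorphism $\mO_p \simeq M_2(\Z_p)$ identifies the reduced norm with the determinant and $\mO_p^\times$ with $\GL_2(\Z_p)$. The composite $\mO \hookrightarrow \mO_p \simeq M_2(\Z_p)$ sends each $\alpha$ with $\nu(\alpha)=p$ to a matrix of determinant $p$, and since $\mO^\times \subset \mO_p^\times$ this descends to a well-defined map $\phi \colon C_p \to C'_p$.

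For injectivity, suppose $\alpha,\beta \in \mO$ both have reduced norm $p$ and represent the same class in $C'_p$. Then $u := \beta^{-1}\alpha \in B^\times$ lies in $\mO_p^\times$ by hypothesis. For every finite prime $q \neq p$, the equality $\nu(\alpha)=\nu(\beta)=p$ shows that $\alpha,\beta \in \mO_q^\times$ (inside the maximal order of $B_q$), and hence $u \in \mO_q^\times$. Invoking the identity $\mO^\times = B^\times \cap \bigcap_{q<\infty} \mO_q^\times$, we conclude $u \in \mO^\times$, so $\alpha$ and $\beta$ already represent the same class in $C_p$.

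For surjectivity I plan to argue by cardinality. Part 1 yields $|C'_p|=p+1$, while the classical theory of the Hurwitz order --- class number one, $|\mO^\times|=24$, together with the Jacobi-type count giving $24(p+1)$ quaternions of norm an odd prime $p$ --- yields $|C_p|=p+1$. An injection between finite sets of equal cardinality is a bijection. I expect the main obstacle to be justifying the cardinality $|C_p|=p+1$; a self-contained alternative is to prove surjectivity directly by using the principality of every left $\mO$-ideal of reduced norm $p$ to realize a prescribed local sublattice as the image of some $\alpha \in \mO$ with $\nu(\alpha)=p$.
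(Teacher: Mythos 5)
Your proposal is correct and follows essentially the same route as the paper: part 1 by explicit identification of the $p+1$ coset representatives, and part 2 by establishing injectivity through the observation that $\beta^{-1}\alpha$ lies in $\mO_q^{\times}$ for every finite prime $q$ (hence in $\mO^{\times}$), then concluding surjectivity from the equality of cardinalities $\#C_p=p+1=\#C'_p$ (the paper cites the count $\#C_p=p+1$ from the proof of Proposition 5.2 of \cite{P}, which rests on the same Jacobi-type count you invoke).
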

\begin{proof}
The first assertion is verified by a direct calculation. We prove the second assertion. 
As is remarked in the proof of \cite[Proposition 5.2]{P} we have $\# C_p=p+1$. 
Under the isomorphism $\mO_p\simeq M_2(\Z_p)$, we can regard any elements in $\mO$ as those in $M_2(\Z_p)$. 
Any two inequivalent representatives of $C_p$ are not equivalent to each other in $\{x\in M_2(\Z_p)\mid\det(x)=p\}/GL_2(\Z_p)$. Otherwise there are two inequivalent representatives $\alpha_1$ and $\alpha_2$ of $C_p$ which are equivalent under $\mO_l^{\times}$-action for all prime $l$, since $\alpha_1/\alpha_2\in\mO_l^{\times}$ for all prime $l$. 
Here note that $\mO^{\times}_l\simeq GL_2(\Z_l)$ for any odd prime $l$. 
This however implies that such two representatives are equivalent to each other in $C_p$. 
We therefore know that there is a injection from $C_p$ into $\{x\in M_2(\Z_p)\mid\det(x)=p\}/GL_2(\Z_p)$. Since the latter set also has $p+1$ representatives as in the statement the injection is actually a bijection.
\end{proof}

Let $F\in M(\GL_2(\mO),r)$ correspond to $\Phi$. By $K_phK_p\cdot F$ we denote the cusp form in $M(\GL_2(\mO),r)$ corresponding to $K_phK_p\cdot\Phi$. Due to Lemma \ref{cusp-lem}, $\cG(\Q)\backslash\cG(\A)/U$ has a complete set of representatives in $\cG(\R)$, where $\cG(\R)$ is viewed as a subgroup of $\cG(\A)$ in the usual manner. 
The cusp form $K_phK_p\cdot\Phi$ is thereby determined by its restriction to $\cG(\R)$, which is nothing but $K_phK_p\cdot F$. 
Moreover, we remark that an element in $M(\GL_2(\mO),r)$ is determined by its restriction 
to $NA=\{n(x)a_y\mid x\in\Hl,~y\in\R_+^{\times}\}$~(see \eqref{Iwasawa-decomp}).
\begin{prop}
Let $K_phK_p\cdot F$ be as above and let $n(x)$ and $a_y$ be as defined in \eqref{Iwasawa-decomp}. For $p$ odd, let $C_p$ be as defined in \eqref{Cp-defn}.
\begin{enumerate}
\item Let $p=2$. We have
\[
(K_2hK_2\cdot F)(n(x)a_y)=F(n(\varpi_2 x)a_{2^{\frac{1}{2}}y})+2^2F(n(\varpi_2^{-1}x)a_{2^{-\frac{1}{2}}y}).
\]
\item Let $p\not=2$.
\begin{enumerate}
\item When $h=\begin{bmatrix}
p & & & \\
& p & & \\
& & p & \\
& & & 1
\end{bmatrix}$ we have
\[
(K_phK_p\cdot F)(n(x)a_y)=\sum_{\alpha\in C_p}F(n(\alpha x)a_{p^{\frac{1}{2}}y})+p^2\sum_{\alpha\in C_p}F(n(x\alpha^{-1})a_{p^{-\frac{1}{2}}y}).
\]
\item When $h=\begin{bmatrix}
p & & & \\
& 1 & & \\
& & 1 & \\
& & & 1
\end{bmatrix}$ we have
\[
(K_phK_p\cdot F)(n(x)a_y)=\sum_{\alpha\in C_p}F(n(x\alpha)a_{p^{\frac{1}{2}}y})+p^2\sum_{\alpha\in C_p}F(n(\alpha^{-1}x)a_{p^{-\frac{1}{2}}y}).
\]
\item When $h=\begin{bmatrix}
p & & & \\
& p & & \\
& & 1 & \\
& & & 1
\end{bmatrix}$ we have
\begin{align*}
 (K_phK_p\cdot F)(n(x)a_y)&=F(n(px)a_{py})+p^4F(n(p^{-1}x)a_{p^{-1}y}) \\
 & \qquad \qquad +p\sum_{(\alpha_1,\alpha_2)\in C_p\times C_p}F(n(\alpha_1^{-1}x\alpha_2)a_y).
\end{align*}
\end{enumerate}
\end{enumerate}
\end{prop}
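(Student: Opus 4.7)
The plan is to convert each adelic Hecke sum into a finite sum of classical values of $F$ by exploiting the class-number-one reduction of Lemma \ref{cusp-lem} and the explicit coset decompositions of Lemma \ref{Gl4-double-cos-lem}.

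First, I unfold: by definition $(K_phK_p\cdot\Phi)(g)=\sum_i\Phi(g\cdot g_i^{(p)})$, where $\{g_i\}$ runs over the right coset representatives listed in Lemma \ref{Gl4-double-cos-lem}, and the superscript $(p)$ denotes the adele with $g_i$ at the place $p$ and identity elsewhere. Every such $g_i$ factors as a diagonal matrix $\delta_i$ (whose entries are scalars $p^{\pm 1}$ in the odd-$p$ case, or the uniformizer $\varpi_2=1+i\in B^{\times}$ in the $p=2$ case) times an upper-triangular unipotent $n_i\in N_p(\Q_p)$, with $n_i$ parametrized by $x_{ij}\in\Z_p/p\Z_p$ (or $x'\in\mO_2/\varpi_2\mO_2$ when $p=2$). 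Lemma \ref{N_p-invariance} applied to the cuspidal $\Phi$ eliminates each $n_i$, so the sum collapses onto the distinct $\delta_i$'s with multiplicity equal to the parameter count. This explains the numerical factors $2^2$ in part 1 and $1,p,p^2,p^4$ in part 2.

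Second, for each surviving diagonal $\delta_i$ I produce a global rational element $\gamma_i\in\cG(\Q)$ such that $\gamma_{i,p}\cdot\delta_i\in Z_{\Q_p}\cdot K_p$ at the place $p$ and $\gamma_{i,q}\in K_q$ at every other finite place $q$. The crucial ingredient is that $\varpi_2$ (in the $p=2$ case) and the representatives $\alpha\in C_p\subset\mO$ furnished by Lemma \ref{Cp-lem} (in the odd-$p$ case) have reduced norm equal to a power of $p$; therefore these elements are units in $\mO_q$ for every $q\neq p$, and diagonal matrices built from them automatically belong to $K_q$ at all other finite places. Combining left-$\cG(\Q)$-invariance, right-$Z_{\A}$-invariance, and right-$U$-invariance of $\Phi$, we obtain
\[
\Phi(g\cdot\delta_i^{(p)})=F(\gamma_{i,\infty}\cdot n(x)a_y).
\]
Putting $\gamma_{i,\infty}\cdot n(x)a_y$ into Iwasawa form $n(x'')a_{y''}\cdot z\cdot k$ with $z\in Z^+$ and $k\in K$ is then routine: polar-decomposing each diagonal entry as $r\cdot w$ with $r\in\R_+$ and $w\in\Hl^1$, the scalar contributions produce the $a_{p^{\pm 1/2}y}$ factor (or the $a_{2^{\pm 1/2}y}$ factor when $p=2$), and the shifts $x\mapsto x''$ yield $\alpha x$, $x\alpha$, $p^{\pm 1}x$, or $\alpha_1^{-1}x\alpha_2$ according to which diagonal block of $\delta_i$ carries the non-unit quaternion entry.

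The main obstacle is the bookkeeping for 2(c), where the double coset $K_p\diag(p,p,1,1)K_p$ decomposes into six Weyl-orbit families (Lemma \ref{Weyl-gp-decomp}(2)(c)) that must be recombined into three summand types in the final answer: the two pure-scalar terms $F(n(px)a_{py})$ and $p^4 F(n(p^{-1}x)a_{p^{-1}y})$, and the mixed-block term $p\sum_{(\alpha_1,\alpha_2)\in C_p\times C_p}F(n(\alpha_1^{-1}x\alpha_2)a_y)$. Using Lemma \ref{Cp-lem} and the isomorphism $\mO_p\simeq M_2(\Z_p)$, I will identify precisely when a Weyl-translated $\delta_i$ places quaternionic representatives on both sides of $x$ (giving the mixed term, which absorbs the $|\Z_p/p\Z_p|=p$ copies coming from the fifth unipotent parameter), versus when it degenerates to a central scalar matrix (giving the pure-scalar terms, with their $p^4=|(\Z_p/p\Z_p)^4|$ multiplicity). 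The analogous but lighter bookkeeping handles parts 2(a), 2(b), and part 1.
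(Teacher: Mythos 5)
Your proposal follows essentially the same route as the paper's own proof: unfold the Hecke operator over the explicit coset decomposition of Lemma \ref{Gl4-double-cos-lem}, kill the unipotent parts by the left $N_p$-invariance of Lemma \ref{N_p-invariance}, transport the surviving diagonal (and $C'_p$-block) representatives to the archimedean place by global rational matrices --- using class number one and Lemma \ref{Cp-lem}, i.e.\ that these entries have reduced norm a power of $p$ and hence are units at all other finite places --- and finish with an archimedean Iwasawa decomposition. The recombination bookkeeping you describe for case 2(c) is exactly the computation the paper carries out.
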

\begin{proof}
We prove only 2 (c). The other cases are settled similarly. 
The left coset decomposition of $K_phK_p$ in part (2) (iii) of Lemma \ref{Gl4-double-cos-lem} can be rewritten as
\begin{align*}
&\bigsqcup_{x_{13},x_{14},x_{23},x_{24}\in\Z_p/p\Z_p}
\begin{bmatrix}
1 & & x_{13} & x_{14}\\
& 1 & x_{23} & x_{24} \\
& & 1 & \\
& & & 1
\end{bmatrix}
\begin{bmatrix}
p & & & \\
& p & & \\
& & 1 & \\
& & & 1
\end{bmatrix}
K_p\\
&\sqcup\bigsqcup_{x_{24},x_{34}\in\Z_p/p\Z_p}
\begin{bmatrix}
1 & & & \\
& 1 & & x_{24}\\
& & 1 & \\
& & & 1
\end{bmatrix}
\begin{bmatrix}
1 & & & \\
& p & & \\
& & p & x_{34} \\
& & & 1
\end{bmatrix}K_p\\
&\sqcup\bigsqcup_{x_{23}\in\Z_p/p\Z_p}
\begin{bmatrix}
1 & & & \\
& 1 & x_{23} & \\
& & 1 & \\
& & & 1
\end{bmatrix}
\begin{bmatrix}
1 & & & \\
& p & & \\
& & 1 & \\
& & & p
\end{bmatrix}K_p\\
&\sqcup\bigsqcup_{x_{12},x_{14},x_{34}\in\Z_p/p\Z_p}
\begin{bmatrix}
1 & & & x_{14} \\
& 1 & & \\
& & 1 & \\
& & & 1
\end{bmatrix}
\begin{bmatrix}
p & x_{12} & & \\
& 1 & & \\
& & p & x_{34} \\
& & & 1
\end{bmatrix}K_p\\
&\sqcup\bigsqcup_{x_{12},x_{13}\in\Z_p/p\Z_p}
\begin{bmatrix}
1 & & x_{13} & \\
& 1 & & \\
& & 1 & \\
& & & 1
\end{bmatrix}
\begin{bmatrix}
p & x_{12} &  & \\
& 1 & & \\
& & 1 & \\
& & & p
\end{bmatrix}K_p
\sqcup
\begin{bmatrix}
1 & & & \\
& 1 & & \\
& & p & \\
& & & p
\end{bmatrix}K_p.
\end{align*} 
We regard the identity $1$~(respectively~the zero $0$) of $B$ as the identity $1_v$~(respectively~$0_v$) of $B_v$ for $v\le\infty$. 
Taking Lemma \ref{N_p-invariance} into consideration, the contribution of the first and the last left cosets to $K_phK_p\cdot\Phi(g)$ can be written as
\[
p^4\Phi((\prod_{v\le\infty,~v\not=p}
\begin{bmatrix}
1_v & 0_v\\
0_v & 1_v
\end{bmatrix}\times
\begin{bmatrix}
p1_p & 0_p\\
0_p & 1_p
\end{bmatrix})g)+\Phi((\prod_{v\le\infty,~v\not=p}
\begin{bmatrix}
1_v & 0_v\\
0_v & 1_v
\end{bmatrix}\times
\begin{bmatrix}
1_p & 0_p\\
0_p & p1_p
\end{bmatrix})g)
\]
and that of the remaining four cosets to $K_phK_p\cdot\Phi(g)$ as
\[
p\sum_{(\alpha'_1,\alpha'_2)\in C'_p\times C'_p}\Phi((\prod_{v\le\infty,~v\not=p}
\begin{bmatrix}
1_v & 0_v\\
0_v & 1_v
\end{bmatrix}\times
\begin{bmatrix}
\alpha'_1 & 0_p\\
0_p & \alpha'_2
\end{bmatrix})g)
\]
for $g\in\cG(\R)\subset\cG(\A)$. 
 
In $\cG(\Q)\backslash\cG(\A)/U$, 
\begin{align*}
\prod_{v\le\infty,~v\not=p}
\begin{bmatrix}
1_v & 0_v\\
0_v & 1_v
\end{bmatrix}\times
\begin{bmatrix}
p1_p & 0_p\\
0_p & 1_p
\end{bmatrix} &= \prod_{v<\infty}
\begin{bmatrix}
1_v & 0_v\\
0_v & 1_v
\end{bmatrix}
\times
\begin{bmatrix}
p^{-1}1_{\infty} & 0_{\infty}\\
0_{\infty} & 1_{\infty}
\end{bmatrix},\\
\prod_{v\le\infty,~v\not=p}
\begin{bmatrix}
1_v & 0_v\\
0_v & 1_v
\end{bmatrix}\times
\begin{bmatrix}
1_p & 0_p\\
0_p & p1_p
\end{bmatrix} &= \prod_{v<\infty}
\begin{bmatrix}
1_v & 0_v\\
0_v & 1_v
\end{bmatrix}\times
\begin{bmatrix}
1_{\infty} & 0_{\infty}\\
0_{\infty} & p^{-1}1_{\infty}
\end{bmatrix}.
\end{align*} 
By Lemma \ref{Cp-lem} the cosets represented by $\prod_{v\le\infty,~v\not=p}
\begin{bmatrix}
1_v & 0_v\\
0_v & 1_v
\end{bmatrix}\times
\begin{bmatrix}
\alpha'_1 & 0_p\\
0_p & \alpha'_2
\end{bmatrix}$ with  $\alpha'_1,\alpha'_2 \in C'_p$ are in bijection with the cosets in $\GL_2(\mO)\backslash\cG(\R)$ represented by 
$\{\begin{bmatrix}
\alpha_1^{-1} & 0\\
0 & \alpha_2^{-1}
\end{bmatrix} \mid (\alpha_1,\alpha_2)\in C_p\times C_p\}$. 

Now let us put $g:=\prod_{v<\infty}
\begin{bmatrix}
1_v & 0_v\\
0_v & 1_v
\end{bmatrix}\times n(x)a_y\in\cG(\R)\subset\cG(\A)$. 
We therefore have
\begin{align*}
&p^4\Phi((\prod_{v\le\infty,~v\not=p}
\begin{bmatrix}
1_v & 0_v\\
0_v & 1_v
\end{bmatrix}\times
\begin{bmatrix}
p1_p & 0_p\\
0_p & 1_p
\end{bmatrix})g)+\Phi((\prod_{v\le\infty,~v\not=p}
\begin{bmatrix}
1_v & 0_v\\
0_v & 1_v
\end{bmatrix}\times
\begin{bmatrix}
1_p & 0_p\\
0_p & p1_p
\end{bmatrix})g)\\
&=p^4F(
\begin{bmatrix}
p^{-1} & 0\\
0 & 1
\end{bmatrix}n(x)a_y)+F(
\begin{bmatrix}
1 & 0\\
0 & p^{-1}
\end{bmatrix}n(x)a_y)
\end{align*}
and
\[
p\sum_{(\alpha'_1,\alpha'_2)\in C'_p\times C'_p}\Phi((\prod_{v\le\infty,~v\not=p}
\begin{bmatrix}
1_v & 0_v\\
0_v & 1_v
\end{bmatrix}\times
\begin{bmatrix}
\alpha'_1 & 0_p\\
0_p & \alpha'_2
\end{bmatrix})g)=p\sum_{(\alpha_1,\alpha_2)\in C_p\times C_p}F(
\begin{bmatrix}
\alpha_1^{-1} & 0\\
0 & \alpha_2^{-1}
\end{bmatrix}n(x)a_y)).
\]
Noting the invariance of $F$ with respect to $K$ and $Z^+$~(see \eqref{Iwasawa-decomp} for $Z^+$), we deduce the assertion from this by a direct computation.
\end{proof}

For this proposition we remark that the formulas above do not depend on the choices of representatives of $C_p$ since $F$ is left and right invariant with respect to $\{
\begin{bmatrix}
u_1 & \\
    & u_2
\end{bmatrix}\mid u_1,~u_2\in\mO^{\times}\}$. Let the Fourier decomposition of $(K_phK_p\cdot F)$ be given by
$$(K_phK_p\cdot F)(n(x)a_y) = \sum\limits_{\beta \in S \backslash \{0\}} (K_phK_p\cdot F)_{\beta} y^2K_{\sqrt{-1}r}(2\pi|\beta|y)e^{2\pi\sqrt{-1}\real(\beta x)}.$$
The next proposition provides a formula for $(K_phK_p\cdot F)_{\beta}$ in terms of the Fourier coefficients $A(\beta)$ of $F$.
\begin{prop}\label{hecke-fourier-prop}
\begin{enumerate}
\item Let $p=2$. We obtain
\[
(K_2hK_2\cdot F)_{\beta}=2(A(\beta\varpi_2^{-1})+A(\beta\varpi_2)).
\]

\item
Let $p$ be an odd prime and $\beta\in S\setminus\{0\}$.
\begin{enumerate}
\item When $h=\begin{bmatrix}
p & & & \\
& p & & \\
& & p & \\
& & & 1
\end{bmatrix}$, 
\[
(K_phK_p\cdot F)_{\beta}=p(\sum_{\alpha\in C_p}A(\beta\bar{\alpha}^{-1})+\sum_{\alpha\in C_p}A(\bar{\alpha}\beta)).
\]
\item When $h=\begin{bmatrix}
p & & & \\
& 1 & & \\
& & 1 & \\
& & & 1
\end{bmatrix}$, 
\[
(K_phK_p\cdot F)_{\beta}=p(\sum_{\alpha\in C_p}A(\alpha^{-1}\beta)+\sum_{\alpha\in C_p}A(\beta\alpha)).
\]
\item When $h=\begin{bmatrix}
p & & & \\
& p & & \\
& & 1 & \\
& & & 1
\end{bmatrix}$, 
$$ (K_phK_p\cdot F)_{\beta} =(p^{2}A(p^{-1}\beta)+p^{2}A(p\beta) 
+p\sum_{(\alpha_1,\alpha_2)\in C_p\times C_p}A(\alpha_{1}^{-1}\beta\alpha_2)).
$$
\end{enumerate}
\end{enumerate}
\end{prop}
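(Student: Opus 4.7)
My plan is a direct Fourier-coefficient calculation. The preceding proposition writes $(K_phK_p\cdot F)(n(x)a_y)$ as a finite sum of terms of the shape $c\cdot F(n(\alpha x\alpha')a_{\lambda y})$, with explicit scalars $c,\lambda$ and with $\alpha,\alpha'$ drawn from $\{1,\varpi_{2}^{\pm 1},p^{\pm 1}\}\cup C_p$. I substitute the Fourier expansion
$$F(n(x)a_y)=\sum_{\beta'\in S\setminus\{0\}}A(\beta')\,y^{2}K_{\sqrt{-1}r}(2\pi|\beta'|y)\,e^{2\pi\sqrt{-1}\real(\beta' x)}$$
into each such term and read off the coefficient of $e^{2\pi\sqrt{-1}\real(\beta x)}$.

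The mechanism uses two algebraic identities. First, the cyclicity of the reduced trace gives $\real(\beta'\alpha x\alpha')=\real(\alpha'\beta'\alpha\,x)$, so the substitution $\beta:=\alpha'\beta'\alpha$, equivalently $\beta'=\alpha'^{-1}\beta\alpha^{-1}$, turns the exponent into $\real(\beta x)$ and re-indexes the series. Second, the multiplicativity of the reduced norm gives $|\beta'|=|\beta|/(|\alpha||\alpha'|)$, which combines with the dilation $\lambda$ in $a_{\lambda y}$ so that the Bessel argument $2\pi|\beta'|\lambda y$ collapses to $2\pi|\beta|y$; the matching factor $\lambda^{2}$ coming from $(\lambda y)^{2}$ then combines with the outer scalar $c$ to produce the claimed powers of $p$ or of $2$. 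Running this through each case is bookkeeping. For case~1 one takes $\beta'=\beta\varpi_{2}^{\mp 1}$ and checks that $(\sqrt{2}y)^{2}$ and $4(y/\sqrt{2})^{2}$ both contribute a factor $2$, producing $2(A(\beta\varpi_{2}^{-1})+A(\beta\varpi_{2}))$. For case~2(c) one takes $\beta'=\alpha_{2}^{-1}\beta\alpha_{1}$, whose reduced norm equals $|\beta|$, so the outer $p$ propagates verbatim and a relabeling $\alpha_{1}\leftrightarrow\alpha_{2}$ in the double sum recovers the stated form; the two diagonal terms $F(n(px)a_{py})$ and $p^{4}F(n(p^{-1}x)a_{p^{-1}y})$ give $p^{2}A(p^{-1}\beta)$ and $p^{2}A(p\beta)$ respectively. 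Cases~2(a) and~2(b) are the one-sided analogues, where $\beta'=\beta\alpha^{-1}$ or $\beta'=\alpha^{-1}\beta$.

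The one non-clerical point is that arguments inside $A$ such as $\beta\alpha^{-1}=\beta\bar\alpha/p$ lie in $B$ but not necessarily in $S$, and the summands $A(\beta\bar\alpha^{-1})$ and $A(\bar\alpha\beta)$ must be well-defined on the $\mO^{\times}$-class of $\alpha\in C_p$ rather than on its chosen representative. Both points are resolved as follows. One extends $A$ by zero outside $S$, consistent with the Fourier support of $F$; and one notes that $A(\gamma)$ is invariant under left and right multiplication of $\gamma$ by elements of $\mO^{\times}$. This invariance follows from the left $\Gamma$-invariance of $F$ applied to $\diag(u,1),\diag(1,u)\in\GL_{2}(\mO)$ for $u\in\mO^{\times}$, combined with right $K$-invariance after the Iwasawa rearrangement $\diag(u,1)\,n(x)\,a_{y}=n(ux)\,a_{y}\,\diag(u,1)$ and its analogue for $\diag(1,u)$. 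With these in hand, a re-indexing by the conjugation $\alpha\mapsto\bar\alpha$ converts the expression $A(\beta\alpha^{-1})$ that comes out of the raw computation into $A(\beta\bar\alpha^{-1})$, and likewise $A(\alpha\beta)$ into $A(\bar\alpha\beta)$. The only real obstacle, then, is not conceptual but the care needed to keep track of non-commutative multiplication in $\Hl$ through these changes of variable.
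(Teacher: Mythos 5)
Your proposal is correct and is precisely the argument the paper intends (the paper states this proposition without a written proof, leaving exactly this substitution of the Fourier expansion into the preceding proposition's formulas, together with the remark that $A(u_1\beta u_2)=A(\beta)$ for $u_1,u_2\in\mO^{\times}$ makes the sums over $C_p$ well defined). Your additional care in extending $A$ by zero outside $S$ and in re-indexing by $\alpha\mapsto\bar\alpha$ to pass from $A(\beta\alpha^{-1})$, $A(\alpha\beta)$ to $A(\beta\bar\alpha^{-1})$, $A(\bar\alpha\beta)$ fills in the only points the paper leaves implicit.
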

For this proposition we note that the automorphy of $F$ with respect to $\{
\begin{bmatrix}
u_1 & \\
    & u_2
\end{bmatrix}\mid u_1,~u_2\in\mO^{\times}\}$ implies $A(u_1\beta u_2)=A(\beta)$ for $\beta\in S\setminus\{0\}$ and $u_1,~u_2\in\mO^{\times}$. 
From this we see that the formulas in 2)(b) and 2)(c) do not depend on the choices of representatives for $C_p$. 
As for 2)(a) we furthermore see that, given any complete set $\{\alpha_i\mid 1\le i\le p+1\}$ of representatives for $C_p$, $\{\bar{\alpha_i}\mid 1\le i\le p+1\}$ also forms such a set. As a result we see that the formula in 2)(a) is also not dependent on the choices of representatives for $C_p$.

\subsection{Hecke equivariance for $p=2$}\label{Hecke-equiv-even}
Let $f \in S(\Gamma_0(2);-(\frac{1}{4}+\frac{r^2}{4}))$ be a new form~(for the definition see \cite[Section 8.5]{Iwn}) with Hecke eigenvalue $\lambda_p$ for $p=2$. 
By the Hecke eigenvalue $\lambda_2$ we mean the eigenvalue of $f$ for the $U(2)$ operator defined by the action of the double coset $\Gamma_0(2) \mat{1}{}{}{2} \Gamma_0(2)$. Let us also assume that $f$ is an eigenfunction of the Atkin Lehner involution with eigenvalue $\epsilon$. It can be checked that $\lambda_2$ and $\epsilon$ are related by
\begin{equation}\label{lambda2-epsilon-reln}
\lambda_2 = -\epsilon.
\end{equation}
Using the single coset decomposition 
$$\Gamma_0(2) \mat{1}{}{}{2} \Gamma_0(2) = \Gamma_0(2) \mat{1}{1}{}{2} \sqcup \Gamma_0(2) \mat{1}{}{}{2},$$
we get 
$$f(\frac{z+1}2) + f(\frac z2) = \lambda_2 f(z).$$
In terms of Fourier coefficients of $f$, using (\ref{lambda2-epsilon-reln}), we get 
\begin{equation}\label{f-hecke-reln-p=2}
c(2m) = \frac{\lambda_2}2 c(m) = -\frac{\epsilon}2 c(m), \text{ for all } m \in \Z.
\end{equation}

\begin{prop}\label{p=2-hecke-equiv-prop}
Let $f \in S(\Gamma_0(2);-(\frac{1}{4}+\frac{r^2}{4}))$ be a new form with Hecke eigenvalue $\lambda_p$ for $p=2$ and an eigenfunction of the Atkin Lehner involution with eigenvalue $\epsilon$. Let $F = F_f$ be as defined in Theorem \ref{lift-thm}. Then
\begin{equation}\label{p=2-hecke-equiv-eqn}
(K_2 \mat{\varpi_2}{}{}{1} K_2) F = -3\sqrt{2}\epsilon F.
\end{equation}
\end{prop}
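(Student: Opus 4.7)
The plan is to verify the Hecke eigenvalue equation coefficient-by-coefficient against the Fourier expansion. By part~1 of Proposition~\ref{hecke-fourier-prop}, we have $(K_2hK_2\cdot F)_\beta = 2(A(\beta\varpi_2^{-1}) + A(\beta\varpi_2))$ for every $\beta \in S\setminus\{0\}$, with the understanding that $A(\beta\varpi_2^{-1})$ is zero when $\beta\varpi_2^{-1} \notin S$ (which is precisely when $\varpi_2 \| \beta$). It thus suffices to establish
\[
2\bigl(A(\beta\varpi_2^{-1}) + A(\beta\varpi_2)\bigr) = -3\sqrt{2}\,\epsilon\, A(\beta)
\]
for every $\beta \in S\setminus\{0\}$.

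First, I would write $\beta$ in the form dictated by Proposition~\ref{main-prop} as $\beta = \varpi_2^{u+1} d\alpha$, where $u \geq 0$, $d$ is a positive odd integer, and $\alpha \in \mO$ is not divisible by $\varpi_2$ and has no non-trivial odd integer divisor, so that $\beta_0 := \varpi_2\alpha \in S^{\prim}$. Set $m := \nu(\alpha)$, which is odd. Using the identity $\mO\varpi_2 = \varpi_2\mO$ to move the uniformizer between the two sides, one checks that $\beta\varpi_2$ fits the analogous decomposition with $u$ replaced by $u+1$ (same $d$, new primitive $\alpha'$ of norm $m$), while $\beta\varpi_2^{-1}$ either has $u$ replaced by $u-1$ (when $u \geq 1$) or lies outside $S$ (when $u = 0$).

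The essential input from the new form hypothesis is relation~(\ref{f-hecke-reln-p=2}), which gives $c(2N) = -\tfrac{\epsilon}{2} c(N)$ for every $N \in \Z$. Setting
\[
a_s := \sum_{n\mid d} c(-2^s d^2 m/n^2) \qquad (s \geq 0),
\]
this produces the geometric progression $a_s = (-\epsilon/2)^s\, a_0$. Plugging into the formula of Proposition~\ref{main-prop} and carrying out the resulting geometric sum in $t$ yields the closed form
\[
A(\varpi_2^{u'+1} d\,\tilde\alpha) = |\varpi_2^{u'+1} d\,\tilde\alpha|\,(-\epsilon)^{u'}\bigl(2 - 2^{-u'}\bigr)\, a_0,
\]
valid for any $u' \geq 0$ with the same $d$ and $m$. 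Specializing to $u' = u$, $u+1$, and (when $u \geq 1$) $u-1$, and using $|\beta\varpi_2^{\pm 1}| = \sqrt{2}^{\pm 1}|\beta|$, reduces the target identity to the elementary arithmetic check
\[
\tfrac{1}{\sqrt{2}}\bigl(2-2^{-(u-1)}\bigr) + \sqrt{2}\,\bigl(2-2^{-(u+1)}\bigr) = \tfrac{3\sqrt{2}}{2}\bigl(2-2^{-u}\bigr)
\]
in the case $u \geq 1$; the case $u = 0$ is handled by the same computation with only the $A(\beta\varpi_2)$ term present and gives the same eigenvalue directly.

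The main obstacle is bookkeeping: one must correctly identify the updated data $(u,d,m)$ after right multiplication by $\varpi_2^{\pm 1}$, using the compatibility of $\varpi_2$ with the order $\mO$, and must be careful that the convention $A(\cdot) = 0$ outside $S$ is applied precisely when $u = 0$ so that both cases collapse to the same eigenvalue $-3\sqrt{2}\,\epsilon$. Once the geometric progression $a_s = (-\epsilon/2)^s a_0$ is in hand, the remaining work is the single identity in powers of $2$ displayed above.
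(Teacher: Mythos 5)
Your proposal is correct and follows essentially the same route as the paper: both reduce the claim via part~1 of Proposition \ref{hecke-fourier-prop} to the identity $2(A(\beta\varpi_2)+A(\beta\varpi_2^{-1}))=-3\sqrt{2}\epsilon A(\beta)$, use the newform relation \eqref{f-hecke-reln-p=2} to collapse the double sum defining $A$ into a closed form in $u$, and finish with the same elementary identity in powers of $2$ (your normalized version $\tfrac{1}{\sqrt{2}}(2-2^{-(u-1)})+\sqrt{2}(2-2^{-(u+1)})=\tfrac{3\sqrt{2}}{2}(2-2^{-u})$ is the paper's $\tfrac{2^{u+2}-1}{\sqrt{2}}+(2^u-1)\sqrt{2}=\tfrac{3}{\sqrt{2}}(2^{u+1}-1)$ divided by $2^u$). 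The only cosmetic difference is that you factor everything through $a_0$ via the geometric progression $a_s=(-\epsilon/2)^s a_0$, whereas the paper expresses all three coefficients in terms of $\sum_{n\mid d}c(-|\beta|^2/2n^2)$ directly.
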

\begin{proof}
Let $\beta = \varpi_2^u d \beta_0$ be a decomposition according to Proposition \ref{main-prop}. Hence, $u \geq 0, d$ is odd and $\beta_0 \in S^{\rm prim}$. Using (\ref{lambda2-epsilon-reln}) and (\ref{f-hecke-reln-p=2}) we see that 
\begin{align*}
A(\beta) &= (2^{u+1}-1) |\beta| \sum\limits_{n | d} c\Big(\frac{-|\beta|^2}{2n^2}\Big) \\
A(\beta \varpi_2) &= (2^{u+2}-1) \frac{-\epsilon}{\sqrt{2}} |\beta| \sum\limits_{n | d} c\Big(\frac{-|\beta|^2}{2n^2}\Big) \\
A(\beta \varpi_2^{-1}) &= (2^u-1) (-\epsilon \sqrt{2}) |\beta| \sum\limits_{n | d} c\Big(\frac{-|\beta|^2}{2n^2}\Big).
\end{align*}
Note that, if $u=0$, then $A(\beta \varpi_2^{-1}) = 0$ and so is the right hand side of the third equality above. We have
$$\frac{2^{u+2}-1}{\sqrt{2}} + (2^u-1)\sqrt{2} = \frac 3{\sqrt{2}} (2^{u+1}-1).$$
Hence, we have
$$2\Big(A(\beta \varpi_2) + A(\beta \varpi_2^{-1})\Big) = -3\sqrt{2}\epsilon A(\beta).$$
The proposition now follows from part 1. of Proposition \ref{hecke-fourier-prop}.
\end{proof}

\subsection{Hecke equivariance for odd primes}
We assume that $f \in S(\Gamma_0(2);-(\frac{1}{4}+\frac{r^2}{4}))$ is a Hecke eigenform with Hecke eigenvalue $\lambda_p$ for every odd prime $p$ but do not assume that $f$ is a new form. In terms of Fourier coefficients of $f$, the Hecke relation is given by 
\begin{equation}\label{oddp-hecke-reln-f}
p^{\frac 12} c(pn) + p^{-\frac 12} c(n/p) = \lambda_p c(n),
\end{equation}
where $c(n/p)$ is assumed to be zero if $p$ does not divide $n$. The following lemma will play a key role in the computation of the Hecke operator.
\begin{lem}\label{fund-lemma}
Let $\beta \in S^{\rm prim}$. Then
\begin{equation}\label{fund-eqn}
\# \{ \alpha \in C_p : p | \beta \alpha \} = \# \{ \alpha \in C_p : p | \alpha \beta \} = \begin{cases} 1 & \text{ if } p\, | \,|\beta|^2, \\
0 & \text{ if } p\not|\, |\beta|^2.
\end{cases}
\end{equation}
In addition, $p^2$ does not divide $\alpha \beta$ or $\beta \alpha$ for any $\alpha \in C_p$.
\end{lem}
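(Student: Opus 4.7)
Since $p$ is odd, fix the local isomorphism $\mO_p\simeq M_2(\Z_p)$ identifying the reduced norm with the determinant; in particular $|\beta|^2=\det\beta$. The primitivity condition $\beta\in S^{\rm prim}$ forces $\beta\notin pM_2(\Z_p)$, so $\beta\bmod p$ is a nonzero matrix in $M_2(\Z/p\Z)$. I will work with the explicit right $\GL_2(\Z_p)$-coset representatives $\alpha_\infty=\diag(1,p)$ and $\alpha_b=\begin{bmatrix}p&b\\0&1\end{bmatrix}$ ($b\in\Z/p\Z$) of $C_p$ coming from Lemma \ref{Cp-lem}.

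If $p\nmid|\beta|^2$, then $\beta\in\GL_2(\Z_p)$ and $\beta\alpha\in pM_2(\Z_p)$ forces $\alpha\in\beta^{-1}\cdot pM_2(\Z_p)=pM_2(\Z_p)$, contradicting $\det\alpha=p$; the count is $0$, and the same argument (now left-multiplying by $\beta^{-1}$) handles $\alpha\beta$.

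If $p\mid|\beta|^2$, then $\beta\bmod p$ has rank exactly one. Writing $\beta\equiv\begin{bmatrix}a_{11}&a_{12}\\a_{21}&a_{22}\end{bmatrix}\pmod p$ with the rank-one relation $a_{11}a_{22}\equiv a_{12}a_{21}\pmod p$, a direct computation shows that $\beta\alpha_\infty\equiv 0\pmod p$ iff the first column of $\beta$ vanishes mod $p$, and $\beta\alpha_b\equiv 0\pmod p$ iff both $a_{11}b+a_{12}\equiv 0$ and $a_{21}b+a_{22}\equiv 0\pmod p$. A short case split on whether $a_{11}\equiv 0\pmod p$ (forcing $a_{21}\equiv 0\pmod p$ by the rank-one relation in that case), using the same relation to guarantee that the two equations in $b$ admit a unique common solution exactly when the first column is nonzero mod $p$, shows that precisely one representative works. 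For the count of $\alpha$ with $p\mid\alpha\beta$, I use the conjugation identity $\overline{\alpha\beta}=\bar\beta\bar\alpha$ together with $\overline{p\mO}=p\mO$, which gives $\alpha\beta\in p\mO\Leftrightarrow\bar\beta\bar\alpha\in p\mO$; since $\bar S=S$ (via $1-i=-i(1+i)$) and primitivity is preserved under conjugation, $\bar\beta\in S^{\rm prim}$ with $|\bar\beta|^2=|\beta|^2$, so the count reduces to the previous one applied to $\bar\beta$ and is again $1$.

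For the final assertion, suppose for contradiction that $p^2\mid\beta\alpha$, so $\beta\alpha=p^2\delta$ for some $\delta\in M_2(\Z_p)$. Multiplying on the right by the adjugate $\alpha^\#\in M_2(\Z_p)$ and using $\alpha\alpha^\#=\det(\alpha)\cdot I=p\cdot I$ gives $p\beta=p^2\delta\alpha^\#$, hence $\beta\in pM_2(\Z_p)$, contradicting primitivity. The analogous manipulation on the left rules out $p^2\mid\alpha\beta$. The main obstacle is cleanly passing between the $\beta\alpha$ and $\alpha\beta$ conditions, which are naturally defined on different sides of the $\mO^\times$-action; conjugation supplies the bijection needed. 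Everything else reduces to a short matrix calculation exploiting the rank-one structure of $\beta\bmod p$.
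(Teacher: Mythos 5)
Your proof is correct in substance but takes a genuinely different route from the paper's. The paper works directly in quaternion coordinates $\beta=\beta_1+\beta_2i+\beta_3j+\beta_4ij$ over $\Z/p\Z$: it encodes $p\mid\beta\alpha$ as a $4\times4$ matrix equation $P_\beta\alpha\equiv 0$, shows $P_\beta$ has rank $2$, and then locates the unique class of $C_p$ in $\ker P_\beta$ by invoking the bijection from \cite{P1} between $\{\alpha\in C_p:\alpha_3^2+\alpha_4^2\not\equiv 0\}$ and the solutions of $x^2+y^2+1\equiv 0\pmod p$. You instead pass to the split local picture $\mO_p\simeq M_2(\Z_p)$ and use the explicit right-coset representatives of Lemma \ref{Cp-lem}, reducing everything to a rank-one $2\times2$ computation mod $p$; this is shorter, self-contained (no appeal to \cite{P1}), and is the standard way to analyse a Hecke condition at a split place. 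Both arguments use the identical conjugation trick (together with $\bar S=S$) to deduce the $\alpha\beta$ count from the $\beta\alpha$ count, and your adjugate argument for the final assertion just spells out the paper's one-line remark. One local slip to fix: you assert that $a_{11}\equiv 0\pmod p$ forces $a_{21}\equiv 0\pmod p$ by the rank-one relation, which is false (take $\beta\equiv\bigl[\begin{smallmatrix}0&0\\1&1\end{smallmatrix}\bigr]\pmod p$); the relation $a_{11}a_{22}\equiv a_{12}a_{21}$ only yields $a_{12}a_{21}\equiv 0$ in that case. The correct subcase $a_{11}\equiv 0$, $a_{21}\not\equiv 0$ gives $a_{12}\equiv 0$, so the first equation in $b$ is vacuous and the second has a unique solution; your headline claim---a unique common solution in $b$ exists exactly when the first column of $\beta$ is nonzero mod $p$---is correct, and the count of $1$ is unaffected.
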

\begin{proof}
Note that, by taking conjugates, it is enough to prove the statement of the lemma for $\{\alpha \in C_p : p | \beta \alpha\}$ for all $\beta$. Taking norms, it is clear that $p$ does not divide $\beta \alpha$ if $p$ does not divide $|\beta|^2$. Hence, assume that $p$ divides $|\beta|^2$.  Let $\beta = \beta_1 + \beta_2 i + \beta_3 j + \beta_4 ij$. The conditions $p | |\beta|^2$ and $p \not| \,\beta$, imply that there is a pair amongst the set $\{\beta_1, \beta_2, \beta_3, \beta_4\}$ which does not satisfy $x^2 + y^2 \equiv 0 \pmod{p}$. From the proof it will be clear that we can take, without loss of generality, $\beta_3^2 + \beta_4^2 \not\equiv 0 \pmod{p}$. Let $\alpha = \alpha_1 + \alpha_2 i + \alpha_3 j + \alpha_4 ij$. The condition $p | \beta \alpha$ is equivalent to the following matrix equation modulo $p$.
$$\underbrace{\begin{bmatrix}\beta_1 & - \beta_2 & -\beta_3 & -\beta_4 \\
\beta_2 &  \beta_1 & -\beta_4 & \beta_3 \\
\beta_3 &  \beta_4 & \beta_1 & -\beta_2 \\
\beta_4 & - \beta_3 & \beta_2 & \beta_1 \end{bmatrix}}_{P_\beta} \begin{bmatrix} \alpha_1\\\alpha_2\\\alpha_3\\\alpha_4\end{bmatrix} = \begin{bmatrix}0\\0\\0\\0\end{bmatrix}.$$
The matrix $P_\beta$ considered over $\Z_p$ has rank $2$. By our assumption $\beta_3^2 + \beta_4^2 \not\equiv 0 \pmod{p}$, we see that the kernel of $P_\beta$ is spanned by the first two rows of $P_\beta$. Hence, $\alpha$ is given by 
$$\alpha_1 = a\beta_1 + b \beta_2, \quad \alpha_2 = -a\beta_2+b\beta_1,\quad  \alpha_3 = -a\beta_3-b\beta_4, \quad \alpha_4 = -a\beta_4+b\beta_3,$$
for some $a, b \in \Z_p$. This gives us $\alpha_3^2 + \alpha_4^2 = (a^2+b^2)(\beta_3^2 + \beta_4^2) \neq 0$. This is because, by assumption $\beta_3^2 + \beta_4^2 \neq 0$, and $|\alpha|^2 = (a^2+b^2) |\beta|^2$ and $p^2$ does not divide $|\alpha|^2$. 

On Pg 69 of \cite{P1}, it has been shown that the set $S_1 = \{\alpha \in C_p : \alpha_3^2 + \alpha_4^2 \not\equiv 0 \pmod{p} \}$ is in bijection with the set $S_2 = \{(x, y) \in \Z_p \times \Z_p : x^2 + y^2 + 1 \equiv 0 \pmod{p} \}$. The map from $S_1$ to $S_2$ is given as follows. For $\alpha \in S_1$, we obtain $(x_\alpha, y_\alpha)$ as the solution to the matrix equation modulo $p$ given by
$$\mat{\alpha_3}{\alpha_4}{-\alpha_4}{\alpha_3} \begin{bmatrix}x_\alpha \\ y_\alpha \end{bmatrix} = \begin{bmatrix}\alpha_1 \\ \alpha_2 \end{bmatrix}.$$
 One can check that $(x, y) \in S_2$ for the following choice of $x$ and $y$.
\begin{equation}\label{xychoice}
x = \frac{-\beta_1 \beta_3 - \beta_2 \beta_4}{\beta_3^2+\beta_4^2}, \qquad y = \frac{\beta_2 \beta_3 - \beta_4 \beta_1}{\beta_3^2+\beta_4^2}.
\end{equation}
If one takes $\alpha \in S_1$ to be the pre-image of the above $(x,y)$, then we can check that $\alpha \in {\rm Ker}(P_\beta)$. On the other hand, if $\alpha \in C_p$ belongs to ${\rm Ker}(P_\beta)$, then it can also be checked that the corresponding $(x_\alpha, y_\alpha)$ are equivalent modulo $p$ to those in (\ref{xychoice}). This completes the proof of \eqref{fund-eqn}. 

If $p^2$ divides $\beta \alpha$ or $\alpha \beta$ for some $\alpha \in C_p$ then it is clear that $\beta$ cannot be primitive. This completes the proof of the lemma.
\end{proof}

\begin{prop}\label{oddp-hecke-1-2-prop}
Let $f \in S(\Gamma_0(2);-(\frac{1}{4}+\frac{r^2}{4}))$ be a  Hecke eigenform with Hecke eigenvalue $\lambda_p$ for every odd prime $p$. Let $F = F_f$ be as defined in Theorem \ref{lift-thm}. For an odd prime $p$ we then have
\begin{equation}\label{oddp-hecke-1-2-eqn}
(K_p \begin{bmatrix}p\\&p\\&&p\\&&&1\end{bmatrix} K_p) F = (K_p \begin{bmatrix}p\\&1\\&&1\\&&&1\end{bmatrix} K_p) F = p(p+1) \lambda_p F.
\end{equation}
\end{prop}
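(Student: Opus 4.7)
The proof strategy is to compute $(T_1 F)_\beta$ and $(T_2 F)_\beta$ via the explicit Fourier-coefficient formulas of Proposition \ref{hecke-fourier-prop}(2)(a),(b), and show the result collapses to $p(p+1)\lambda_p A(\beta)$. Fix $\beta \in S \setminus \{0\}$ and decompose $\beta = \varpi_2^u d\beta_0$ with $\beta_0 \in S^{\rm prim}$, $d$ a positive odd integer and $u \geq 0$. Because each $\alpha \in C_p$ has odd norm $p$ and is thus a $\varpi_2$-unit, $\beta\alpha = \varpi_2^u d(\beta_0\alpha)$ and $\bar\alpha\beta = \varpi_2^u d(\bar\alpha\beta_0)$, reducing the analysis to the primitive decompositions of $\beta_0\alpha$ and $\bar\alpha\beta_0$.

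By Lemma \ref{fund-lemma}, a generic $\alpha \in C_p$ yields $\beta_0\alpha \in S^{\rm prim}$, and, precisely when $p\mid|\beta_0|^2$, a unique exceptional $\alpha^* \in C_p$ satisfies $\beta_0\alpha^* = p\gamma$ with $\gamma \in S^{\rm prim}$ (the ``$p^2\nmid\beta_0\alpha^*$'' clause guaranteeing primitivity of $\gamma$), so that $\beta\alpha^* = \varpi_2^u(dp)\gamma$. A mirror statement supplies a unique $\alpha^{**}$ with $\bar\alpha^{**}\beta_0 = p\gamma'$. Substituting into \eqref{Abeta-defn} gives $A(\beta\alpha) = \sqrt{p}|\beta|\sum_{t,n\mid d}(-\epsilon)^t c(-p|\beta|^2/(2^{t+1}n^2))$ for generic $\alpha$, with $\sum_{n\mid d}$ replaced by $\sum_{n\mid dp}$ when $\alpha = \alpha^*$; and $A(\alpha^{-1}\beta) = A(\bar\alpha\beta/p)$, nonzero only when $p\mid\bar\alpha\beta$, equals a variant with argument $-|\beta|^2/(p\cdot 2^{t+1}n^2)$. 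Summing over $\alpha$ and matching the ``extra'' divisors appearing in $\sum_{n\mid dp}$ against the $\alpha^{**}$-contribution, a case-by-case verification (over whether $p\mid|\beta_0|^2$ and whether $p\mid d$) shows that the combination collapses uniformly to
\[
\sum_{\alpha \in C_p}\!A(\alpha^{-1}\beta) + \sum_{\alpha \in C_p}\!A(\beta\alpha) = (p+1)|\beta|\!\sum_{t=0}^{u}\!\sum_{n\mid d}(-\epsilon)^t\!\left[\sqrt{p}\,c\!\left(\tfrac{-p|\beta|^2}{2^{t+1}n^2}\right) + \tfrac{1}{\sqrt{p}}\,c\!\left(\tfrac{-|\beta|^2}{p\cdot 2^{t+1}n^2}\right)\right].
\]
Applying the Hecke relation \eqref{oddp-hecke-reln-f} with $m = -|\beta|^2/(2^{t+1}n^2)$ converts each bracket into $\lambda_p c(m)$, so the right-hand side equals $(p+1)\lambda_p A(\beta)$; multiplying by the outer $p$ from Proposition \ref{hecke-fourier-prop} yields \eqref{oddp-hecke-1-2-eqn} for $T_2$. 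The argument for $T_1$ is entirely parallel: swapping left and right (and $\alpha\leftrightarrow\bar\alpha$) gives the identical collapse, thanks to the left--right symmetry in Lemma \ref{fund-lemma}.

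The main obstacle is the consolidation step when $p\mid d$, since the divisor sum $\sum_{n\mid dp}$ no longer splits simply as $\sum_{n\mid d}$ plus an $n\mapsto pn$ rescaling. Writing $d = p^e d'$ with $p\nmid d'$ and enumerating divisors of $dp$ as $\{p^i m : 0\le i\le e+1,\,m\mid d'\}$, the ``new'' divisors $p^{e+1}m$ produce contributions which, when $p\mid|\beta_0|^2$, cancel precisely against the $\alpha^{**}$-term, and which, when $p\nmid|\beta_0|^2$, vanish for arithmetic reasons (the argument of $c$ fails to be an integer). The ``$p^2\nmid\beta_0\alpha^*$'' part of Lemma \ref{fund-lemma} is essential here: it guarantees that the $S^{\rm prim}$-part of $\beta\alpha^*$ has norm exactly $|\beta_0|^2/p$ (and not $|\beta_0|^2/p^2$), which is what keeps the divisor sums and the Hecke relation compatible across all four subcases.
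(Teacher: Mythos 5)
Your proposal is correct and follows essentially the same route as the paper's proof: reduce to Fourier coefficients via Proposition \ref{hecke-fourier-prop}, use Lemma \ref{fund-lemma} to count the exceptional $\alpha\in C_p$, split into cases according to whether $p$ divides $|\beta_0|^2$, and collapse via the Hecke relation \eqref{oddp-hecke-reln-f}. The only (cosmetic) difference is bookkeeping: the paper first reduces to $\beta=p^s\beta_0$ since only the prime $p$ is involved, whereas you carry the full decomposition $\beta=\varpi_2^u d\beta_0$ and track the divisor sums $\sum_{n\mid dp}$ directly, and you handle the first Hecke operator by left--right symmetry rather than by invoking $A(\beta)=A(\bar\beta)$ as the paper does.
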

\begin{proof}
Using Proposition \ref{hecke-fourier-prop} we can show that, if the Fourier coefficients satisfy $A(\beta) = A(\bar\beta)$ for all $\beta \in S$ and the second equality in (\ref{oddp-hecke-1-2-eqn}) holds, then so does the first equality. Since, the Fourier coefficients of $F = F_f$ satisfy the above condition, we are reduced to showing the second equality in (\ref{oddp-hecke-1-2-eqn}).

We will compute the action of the Hecke operator on the Fourier coefficients $A(\beta)$ of $F$. Since, all the computations only involve the prime $p$, it will be enough to consider the case $\beta = p^s \beta_0$ with $s \geq 0$ and $\beta_0 \in S^{\rm prim}$. For such a $\beta$, we have
$$A(\beta) = p^s |\beta_0| \sum\limits_{k=0}^s c\Big(\frac{-|\beta_0|^2p^{2s-2k}}2\Big).$$
Note that $\alpha^{-1}\beta_0 = \frac{1}{p}\bar\alpha\beta_0$ for $\alpha\in B$ with $\nu(\alpha)=p$. Hence, for such $\alpha$, $\alpha^{-1}\beta_0 \in S$ if and only if $p$ divides $\bar\alpha\beta_0$. 

Let us first consider the case where $p$ does not divide $|\beta_0|^2$. Hence, by Lemma \ref{fund-lemma}, we see that $\alpha^{-1}\beta_0 \not\in S$ and $p$ does not divide $\beta_0 \alpha$ for any $\alpha \in C_p$. Hence, for any $\alpha \in C_p$, we have
\begin{equation}\label{1steqn}
A(\beta \alpha) = \sqrt{p} p^s |\beta_0| \sum\limits_{k=0}^s c\Big(\frac{-|\beta_0|^2p^{2s+1-2k}}2\Big)
\end{equation}
and 
\begin{equation}\label{2ndeqn}
A(\alpha^{-1}\beta) =\frac 1{\sqrt{p}} p^s |\beta_0| \sum\limits_{k=0}^{s-1} c\Big(\frac{-|\beta_0|^2p^{2s-1-2k}}2\Big).
\end{equation}
Note that, if $s = 0$, both the left and right hand side of the last equation are zero. Now, using (\ref{oddp-hecke-reln-f}), we get for any $\alpha \in C_p$,
\begin{align*}
 A(\alpha^{-1}\beta) + A(\beta \alpha) &= p^s |\beta_0| \Big(\sum\limits_{k=0}^{s-1} \Big(p^{-1/2} c\Big(\frac{-|\beta_0|^2p^{2s-1-2k}}2\Big) + p^{1/2} c\Big(\frac{-|\beta_0|^2p^{2s+1-2k}}2\Big)\Big) \\
& \qquad  + p^{1/2} c\Big(\frac{-|\beta_0|^2p}2\Big) \Big) \\
&= p^s |\beta_0| \Big(\sum\limits_{k=0}^{s-1} \lambda_p c\Big(\frac{-|\beta_0|^2p^{2s-2k}}2\Big) + \lambda_p c\Big(\frac{-|\beta_0|^2}2\Big)\Big)\\
&= \lambda_p A(\beta).
\end{align*}
Now, using the fact that the number of elements in $C_p$ is $p+1$ and part 2 b) of Proposition \ref{hecke-fourier-prop}, we get the result.

Next, let us assume that $p$ divides $|\beta_0|^2$. By Lemma \ref{fund-lemma}, there is a unique $\alpha_1 \in C_p$ such that $p$ divides $\beta_0 \alpha_1$ and a unique $\alpha_2 \in C_p$ such that $\alpha_2^{-1} \beta_0 \in S$. If $\alpha \in C_p$ but $\alpha \neq \alpha_1$ then the formula for $A(\beta \alpha)$ is the same as in (\ref{1steqn}). For $\alpha = \alpha_1$, we have
$$A(\beta \alpha_1) = \sqrt{p} p^s |\beta_0| \sum\limits_{k=0}^{s+1} c\Big(\frac{-|\beta_0|^2p^{2s+1-2k}}2\Big).$$
For $\alpha \in C_p$ but $\alpha \neq \alpha_2$, the formula for $A(\alpha^{-1}\beta)$ is the same as in (\ref{2ndeqn}). For $\alpha = \alpha_2$, we have
$$A(\alpha_2^{-1}\beta) = \frac 1{\sqrt{p}} p^s |\beta_0| \sum\limits_{k=0}^{s} c\Big(\frac{-|\beta_0|^2p^{2s-1-2k}}2\Big).$$
Hence, we get the following,
\begin{align*}
& \sum\limits_{\alpha \in C_p} (A(\alpha^{-1}\beta) + A(\beta \alpha)) \\
& \qquad = p p^s |\beta_0| \Big[\sum\limits_{k=0}^s p^{1/2} c\Big(\frac{-|\beta_0|^2p^{2s+1-2k}}2\Big) + \sum\limits_{k=0}^{s-1} p^{-1/2} c\Big(\frac{-|\beta_0|^2p^{2s-1-2k}}2\Big)\Big] \\
& \qquad \qquad + p^s |\beta_0| \Big[\sum\limits_{k=0}^{s+1} p^{1/2} c\Big(\frac{-|\beta_0|^2p^{2s+1-2k}}2\Big) + \sum\limits_{k=0}^s p^{-1/2} c\Big(\frac{-|\beta_0|^2p^{2s-1-2k}}2\Big)\Big] \\
& \qquad = p p^s |\beta_0| \Big[\sum\limits_{k=0}^{s-1} \lambda_p c\Big(\frac{-|\beta_0|^2p^{2s-2k}}2\Big) + p^{1/2} c\Big(\frac{-|\beta_0|^2p}2\Big)\Big] \\
& \qquad \qquad + p^s |\beta_0| \Big[\sum\limits_{k=0}^s \lambda_p c\Big(\frac{-|\beta_0|^2p^{2s-2k}}2\Big) + p^{1/2} c\Big(\frac{-|\beta_0|^2}{2p}\Big)\Big] \\
& \qquad = \lambda_p A(\beta) + p p^s |\beta_0| \Big[\sum\limits_{k=0}^{s-1} \lambda_p c\Big(\frac{-|\beta_0|^2p^{2s-2k}}2\Big) + p^{1/2} c\Big(\frac{-|\beta_0|^2p}2\Big) + p^{-1/2} c\Big(\frac{-|\beta_0|^2}{2p}\Big)\Big] \\
& \qquad = \lambda_p A(\beta) + p \lambda_p A(\beta) = (p+1) \lambda_p A(\beta).
\end{align*}
This completes the proof of the proposition.
\end{proof}

\begin{prop}\label{oddp-hecke-3-prop}
Let $f \in S(\Gamma_0(2);-(\frac{1}{4}+\frac{r^2}{4}))$ be a Hecke eigenform with Hecke eigenvalue $\lambda_p$ for every odd prime $p$. Let $F = F_f$ be as defined in Theorem \ref{lift-thm}. For an odd prime $p$ we then have
\begin{equation}\label{oddp-hecke-3-eqn}
(K_p \begin{bmatrix}p\\&p\\&&1\\&&&1\end{bmatrix} K_p) F = \big(p^2 \lambda^2_p + p^3+p\big) F.
\end{equation}
\end{prop}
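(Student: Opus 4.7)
The plan is to verify the eigenvalue claim directly at the level of Fourier coefficients. By Proposition \ref{hecke-fourier-prop}(2c), the identity to prove becomes
\[
p^2 A(p^{-1}\beta) + p^2 A(p\beta) + p\sum_{(\alpha_1,\alpha_2)\in C_p\times C_p}A(\alpha_1^{-1}\beta\alpha_2) = (p^2\lambda_p^2 + p^3 + p)A(\beta)
\]
for every $\beta\in S\setminus\{0\}$. Using the $\mO^\times$-biinvariance of $A$ together with the fact that only the odd prime $p$ enters, I would first reduce to $\beta = p^s\beta_0$ with $\beta_0\in S^{\rm prim}$ and $s\geq 0$, exactly as in the proof of Proposition \ref{oddp-hecke-1-2-prop}.

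For the double sum, the plan is to iterate the rank-one Hecke relation. Fixing $\alpha_2\in C_p$ and applying Proposition \ref{oddp-hecke-1-2-prop} to $\beta\alpha_2\in S$ yields
\[
\sum_{\alpha_1\in C_p}A(\alpha_1^{-1}\beta\alpha_2) = (p+1)\lambda_p A(\beta\alpha_2) - \sum_{\alpha_1\in C_p}A(\beta\alpha_2\alpha_1),
\]
and summing over $\alpha_2$ reduces the double sum to the single sum $\sum_{\alpha_2}A(\beta\alpha_2)$ (computable by Proposition \ref{main-prop} and the Hecke recursion $p^{1/2}c(pn)+p^{-1/2}c(n/p)=\lambda_p c(n)$, following the proof of Proposition \ref{oddp-hecke-1-2-prop} verbatim) plus the ``double right'' sum $\sum_{\alpha_1,\alpha_2}A(\beta\alpha_2\alpha_1)$.

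For the double right sum, $\alpha_2\alpha_1$ has reduced norm $p^2$. Using the classical fact that the Hurwitz order $\mO$ has $\sigma(p^2)=p^2+p+1$ right ideals of reduced norm $p^2$, of which exactly one (namely $p\mO$) is non-primitive, I split the $(p+1)^2$ pairs into $(p+1)$ pairs giving $\alpha_2\alpha_1\in p\mO^\times$ (contributing $(p+1)A(p\beta)$) and $p(p+1)$ pairs where $\alpha_2\alpha_1=\delta$ ranges through the primitive classes, each occurring exactly once. A local analysis at $p$ (via $\mO_p\cong M_2(\Z_p)$ and Smith normal form, extending Lemma \ref{fund-lemma} to norm-$p^2$ elements) determines the $p$-adic valuation of $\beta_0\delta$: when $p\nmid|\beta_0|^2$, every $\beta_0\delta$ is primitive of norm $p^2|\beta_0|^2$; when $p\mid|\beta_0|^2$, exactly $p$ of the primitive $\delta$'s (namely those of the form $\bar\beta_0\mu$ with $\mu\in C_p$ and $p\nmid\bar\beta_0\mu$) yield $p\mid\beta_0\delta$, while the remaining $p^2$ give primitive $\beta_0\delta$. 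Each of these sub-sums is then computed via Proposition \ref{main-prop}.

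The main obstacle will be the bookkeeping in the case $p\mid|\beta_0|^2$ and the verification, via the extended version of Lemma \ref{fund-lemma}, that every primitive norm-$p^2$ class arises exactly once as $\alpha_2\alpha_1$. Once all contributions are gathered and simplified via two iterated applications of the Hecke recursion for $c(n)$, the scalar $p^2\lambda_p^2+p^3+p$ emerges as the coefficient of $A(\beta)$, completing the verification.
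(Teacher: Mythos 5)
Your strategy is viable and genuinely different from the paper's. The paper attacks the sandwich sum $\sum_{\alpha_1,\alpha_2}A(\alpha_1^{-1}\beta\alpha_2)$ directly, applying Lemma \ref{fund-lemma} twice --- once to $\beta_0$ to sort the right factors $\beta_0\alpha_2$, then once to each $\beta_0\alpha_2$ to sort the left factors --- in the three cases $p\nmid|\beta_0|^2$, $p\parallel|\beta_0|^2$, $p^2\mid|\beta_0|^2$, closing each case with the identities $pc(np^2)=(\lambda_p^2-1)c(n)-p^{-1/2}\lambda_pc(n/p)$ and $pc(np^2)+p^{-1}c(n/p^2)=(\lambda_p^2-2)c(n)$. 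You instead feed the coefficient-level identity behind Proposition \ref{oddp-hecke-1-2-prop} back into itself, trading the sandwich sum for the double-right sum $\sum A(\beta\alpha_2\alpha_1)$, which you control via the factorization of norm-$p^2$ right ideal classes of $\mO$. This is essentially the Hecke-algebra relation expressing $\phi_3$ through $\phi_2$ and $\phi_4$ and is conceptually attractive, but it costs you two combinatorial inputs the paper never needs: that each primitive norm-$p^2$ class arises exactly once as a product $\alpha_2\alpha_1$ (the non-backtracking path count on the tree of $\GL_2(\Q_p)$), and a norm-$p^2$ analogue of Lemma \ref{fund-lemma}. I have checked that your route does reproduce the scalar $p^2\lambda_p^2+p^3+p$, so the plan closes once those inputs are supplied.

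The one genuine gap is that your case division must be threefold, not twofold. When $p^2\mid|\beta_0|^2$ it is still true that exactly $p$ of the $p(p+1)$ primitive classes $\delta$ satisfy $p\mid\beta_0\delta$, but exactly one of these satisfies $p^2\mid\beta_0\delta$: locally $\beta_0\sim\diag(1,p^k)$ with $k\ge2$, the special $\delta$ are $\diag(p,1)m$ with $m$ ranging over the $p$ classes $\begin{bmatrix}p&j\\0&1\end{bmatrix}$, and $j=0$ gives $\beta_0\delta\sim p\,\diag(p,p^{k-1})\in p^2\mO_p$. That single class contributes the extra term $c(-|\beta_0|^2/(2p^2))$ inside $A(\beta_0\delta)$, and this is precisely what the identity $pc(np^2)+p^{-1}c(n/p^2)=(\lambda_p^2-2)c(n)$ needs in order to close the computation; if all $p$ special classes are treated as having $\nu_p(\beta_0\delta)=1$, the final answer overshoots by a nonzero multiple of $c(-|\beta_0|^2/(2p^2))$. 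Relatedly, your parenthetical description of the special $\delta$ as ``$\bar\beta_0\mu$ with $\mu\in C_p$'' cannot be right as written, since $\bar\beta_0\mu$ has reduced norm $p|\beta_0|^2$ rather than $p^2$; the correct condition is $\delta\in p\beta_0^{-1}\mO_p\cap\mO_p$.
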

\begin{proof}
First observe that, using \eqref{oddp-hecke-reln-f}, one can show that, for all $n$,
\begin{equation}\label{lambda2-eq1}
pc(np^2) = (\lambda_p^2-1) c(n) - p^{-1/2} \lambda_p c(n/p).
\end{equation}
If we assume that $p | n$, then we can get another identity given by
\begin{equation}\label{lambda2-eq2}
pc(np^2) + p^{-1}c(n/p^2) = (\lambda_p^2-2) c(n).
\end{equation}
As in the proof of Proposition \ref{oddp-hecke-1-2-prop}, we can assume that $\beta = p^s \beta_0$, where $s \geq 0$ and $\beta_0 \in S^{\rm prim}$. Let us abbreviate $\nu_p(\beta) = s$. For such a $\beta$ we have 
$$A(\beta) = p^s |\beta_0| \sum\limits_{k=0}^s c\Big(\frac{-|\beta_0|^2p^{2s-2k}}2\Big).$$
Hence,
$$A(p \beta) = p^{s+1} |\beta_0| \sum\limits_{k=0}^{s+1} c\Big(\frac{-|\beta_0|^2p^{2s-2k+2}}2\Big)$$
and
$$A(p^{-1}\beta) = p^{s-1} |\beta_0| \sum\limits_{k=0}^{s-1} c\Big(\frac{-|\beta_0|^2p^{2s-2k-2}}2\Big).$$
Next, we need to compute $\sum A(\alpha_1^{-1} \beta \alpha_2)$ where the sum is over all $\alpha_1, \alpha_2$ in $C_p$. We consider three cases depending on whether $p \not| \,|\beta_0|^2$ or $p |\, |\beta_0|^2$ but $p^2 \not| \,|\beta_0|^2$ or $p^2 |\, |\beta_0|^2$.  

{\it Case 1}: Let us assume that $p \not| \, |\beta_0|^2$. Applying Lemma \ref{fund-lemma} to $\beta_0$, we see that $\beta_0 \alpha_2 \in S^{\rm prim}$ for all $\alpha_2 \in C_p$. Again applying Lemma \ref{fund-lemma} to $\beta_0 \alpha_2$ for a fixed $\alpha_2$, we see there is a unique $\alpha_{1,2}\in C_p$ such that $\nu_p(\alpha_{1,2}^{-1} \beta \alpha_2) = s$ and, for all $\alpha_1 \neq \alpha_{1,2}$, we have $\nu_p(\alpha_1^{-1} \beta \alpha_2) = s-1$. Hence,
\begin{align*}
\sum\limits_{\alpha_1, \alpha_2 \in C_p} A(\alpha_1^{-1} \beta \alpha_2) &= \sum\limits_{\alpha_2 \in C_p} \Big(A(\alpha_{1,2}^{-1} \beta \alpha_2) + \sum\limits_{\substack{\alpha_1 \in C_p \\ \alpha_1 \neq \alpha_{1,2}}} A(\alpha_1^{-1} \beta \alpha_2) \Big) \\
&= (p+1) A(\beta) + (p+1) p p^s |\beta_0| \sum\limits_{k=0}^{s-1} c\Big(\frac{-|\beta_0|^2p^{2s-2k}}2\Big).
\end{align*}
Putting this all together, we see that $p^2(A(p\beta) + A(p^{-1}\beta)) + p\sum_{\alpha_1, \alpha_2} A(\alpha_1^{-1} \beta \alpha_2)$ is equal to 
\begin{align*}
& p^2 \Big(p^{s+1} |\beta_0| \sum\limits_{k=0}^{s+1} c\Big(\frac{-|\beta_0|^2p^{2s-2k+2}}2\Big) + p^{s-1} |\beta_0| \sum\limits_{k=0}^{s-1} c\Big(\frac{-|\beta_0|^2p^{2s-2k-2}}2\Big) \Big) \\
& \qquad + p(p+1) A(\beta) + (p+1) p^2 p^s |\beta_0| \sum\limits_{k=0}^{s-1} c\Big(\frac{-|\beta_0|^2p^{2s-2k}}2\Big) \\
=& p^2 p^s |\beta_0| \Big( (\lambda_p^2-2) \sum\limits_{k=0}^{s-1} c\Big(\frac{-|\beta_0|^2p^{2s-2k}}2\Big) + p c\Big(\frac{-|\beta_0|^2p^2}2\Big) + p c\Big(\frac{-|\beta_0|^2}2\Big)\Big) \\
& \qquad + p(p+1) A(\beta) + (p+1) p^2 p^s |\beta_0| \sum\limits_{k=0}^{s-1} c\Big(\frac{-|\beta_0|^2p^{2s-2k}}2\Big) \\
=& p^2 p^s |\beta_0| \Big( (\lambda_p^2-2) \sum\limits_{k=0}^{s} c\Big(\frac{-|\beta_0|^2p^{2s-2k}}2\Big) - (\lambda_p^2-2) c\Big(\frac{-|\beta_0|^2}2\Big) + (\lambda_p^2-1) c\Big(\frac{-|\beta_0|^2}2\Big) \\
& \qquad + p c\Big(\frac{-|\beta_0|^2}2\Big)\Big) + p(p+1) A(\beta) + (p+1) p^2 A(\beta) - (p+1) p^2 p^s |\beta_0| c\Big(\frac{-|\beta_0|^2}2\Big) \\
=& \big(p^2(\lambda_p^2-2) + p(p+1) + p^2(p+1)\big) A(\beta) \\
=& \big(p^2 \lambda_p^2 + p^3 + p\big) A(\beta).
\end{align*}
Here, we have used both \eqref{lambda2-eq1} and \eqref{lambda2-eq2}.

{\it Case 2}: Let $p | \, |\beta_0|^2$ but $p^2 \not| \, |\beta_0|^2$. Applying Lemma \ref{fund-lemma} to $\beta_0$, we see that there is a unique $\hat\alpha_2 \in C_p$ such that $p | \beta_0 \hat\alpha_2$. For $\alpha_2 \neq \hat\alpha_2$, we have $\beta_0 \alpha_2 \in S^{\rm prim}$. Let $\beta_0 \hat\alpha_2 = p \beta_0'$. Then $\beta_0' \in S^{\rm prim}$ and $p \not| \, |\beta_0'|^2$ (since we have assumed that $p^2 \not| \, |\beta_0|^2$). Hence, by Lemma \ref{fund-lemma}, we see that, for all $\alpha_1 \in C_p$, we have $\alpha_1^{-1} \beta_0 \hat\alpha_2 = \bar\alpha_1 \beta_0' \in S^{\rm prim}$. This implies $\nu_p(\alpha_1^{-1} \beta \hat\alpha_2) = s$ for all $\alpha_1 \in C_p$. If $\alpha_2 \neq \hat\alpha_2$, then Lemma \ref{fund-lemma} implies that there is a unique $\alpha_{1,2} \in C_p$ such that $\nu_p(\alpha_{1,2}^{-1} \beta \alpha_2) = s$. For all $\alpha_1 \neq \alpha_{1,2}$, we have $\nu_p(\alpha_1^{-1} \beta \alpha_2) = s-1$. This gives us
\begin{align*}
\sum\limits_{\alpha_1, \alpha_2 \in C_p} A(\alpha_1^{-1} \beta \alpha_2) &= \sum\limits_{\alpha_1 \in C_p} A(\alpha_1^{-1} \beta \hat\alpha_2) + \sum\limits_{\substack{\alpha_2 \in C_p \\ \alpha_2 \neq \hat\alpha_2}} \Big(A(\alpha_{1,2}^{-1} \beta \alpha_2) + \sum\limits_{\substack{\alpha_1 \in C_p \\ \alpha_1 \neq \alpha_{1,2}}} A(\alpha_1^{-1} \beta \alpha_2)\Big) \\
&= (p+1) A(\beta) + pA(\beta) + p^2 p^s |\beta_0| \sum\limits_{k=0}^{s-1} c\Big(\frac{-|\beta_0|^2p^{2s-2k}}2\Big).
\end{align*}
Putting this all together, we see that $p^2(A(p\beta) + A(p^{-1}\beta)) + p\sum_{\alpha_1, \alpha_2} A(\alpha_1^{-1} \beta \alpha_2)$ is equal to 
\begin{align*}
& p^2 p^s |\beta_0| \Big( (\lambda_p^2-2) \sum\limits_{k=0}^{s-1} c\Big(\frac{-|\beta_0|^2p^{2s-2k}}2\Big) + p c\Big(\frac{-|\beta_0|^2p^2}2\Big) + p c\Big(\frac{-|\beta_0|^2}2\Big)\Big) \\
& \qquad + p(p+1) A(\beta) + p^2A(\beta) + p^3 p^s |\beta_0| \sum\limits_{k=0}^{s-1} c\Big(\frac{-|\beta_0|^2p^{2s-2k}}2\Big)\\
=& p^2(\lambda_p^2-2) A(\beta) + p^2p^s |\beta_0| \Big(-(\lambda_p^2-2)c\Big(\frac{-|\beta_0|^2}2\Big) + (\lambda_p^2-2)c\Big(\frac{-|\beta_0|^2}2\Big) + pc\Big(\frac{-|\beta_0|^2}2\Big)\Big) \\
&\qquad + p(p+1) A(\beta) + p^2A(\beta) + p^3 A(\beta) - p^3 p^s |\beta_0| c\Big(\frac{-|\beta_0|^2}2\Big)\\
=& \big(p^2(\lambda_p^2-2) + p(p+1) + p^2 + p^3\big) A(\beta) \\
=& \big(p^2 \lambda_p^2 + p^3 + p\big) A(\beta).
\end{align*}
Here, we have used \eqref{lambda2-eq2} and $p | \,|\beta_0|^2$.

{\it Case 3}: Let $p^2 | \, |\beta_0|^2$. As in Case 2 above, Lemma \ref{fund-lemma} applied to $\beta_0$ implies that there is a unique $\hat\alpha_2 \in C_p$ such that $p | \beta_0 \hat\alpha_2$. For $\alpha_2 \neq \hat\alpha_2$, we have $\beta_0 \alpha_2 \in S^{\rm prim}$. Let $\beta_0 \hat\alpha_2 = p \beta_0'$. Then $\beta_0' \in S^{\rm prim}$ and $p | \, |\beta_0'|^2$ (since we have assumed $p^2 | \, |\beta_0|^2$). Hence by Lemma \ref{fund-lemma}, there is a unique $\hat\alpha_{1,2} \in C_p$ such that $\nu_p(\hat\alpha_{1,2}^{-1} \beta \hat\alpha_2) = s+1$, and for all $\alpha_1 \neq \hat\alpha_{1,2}$, we have $\nu_p(\alpha_1^{-1} \beta \hat\alpha_2) = s$. If $\alpha_2 \neq \hat\alpha_2$, then Lemma \ref{fund-lemma} implies that there is a unique $\alpha_{1,2} \in C_p$ such that $\nu_p(\alpha_{1,2}^{-1} \beta \alpha_2) = s$. For all $\alpha_1 \neq \alpha_{1,2}$, we have $\nu_p(\alpha_1^{-1} \beta \alpha_2) = s-1$. This gives us that $\sum A(\alpha_1^{-1} \beta \alpha_2)$ is equal to
\begin{align*}
& A(\hat\alpha_{1,2}^{-1} \beta \hat\alpha_2) + \sum\limits_{\alpha_1 \neq \hat\alpha_{1,2}} A(\alpha_1^{-1} \beta \hat\alpha_2) + \sum\limits_{\alpha_2 \neq \hat\alpha_2} \Big(A(\alpha_{1,2} \beta \alpha_2) + \sum\limits_{\alpha_1 \neq \alpha_{1,2}} A(\alpha_1^{-1} \beta \alpha_2)\Big) \\
=& p^s |\beta_0| \sum\limits_{k=0}^{s+1} c\Big(\frac{-|\beta_0|^2p^{2s-2k}}2\Big) + pA(\beta) + pA(\beta) + p^2 p^s |\beta_0| \sum\limits_{k=0}^{s-1} c\Big(\frac{-|\beta_0|^2p^{2s-2k}}2\Big) \\
=& A(\beta) + p^s |\beta_0| c\Big(\frac{-|\beta_0|^2}{2p^2}\Big) + pA(\beta) + pA(\beta) + p^2 A(\beta) - p^2 p^s |\beta_0| c\Big(\frac{-|\beta_0|^2}2\Big).
\end{align*}
Putting this all together, we see that $p^2(A(p\beta) + A(p^{-1}\beta)) + p\sum_{\alpha_1, \alpha_2} A(\alpha_1^{-1} \beta \alpha_2)$ is equal to 
\begin{align*}
&p^2(\lambda_p^2-2) A(\beta) + p^2p^s |\beta_0| \Big(-(\lambda_p^2-2)c\Big(\frac{-|\beta_0|^2}2\Big) + p c\Big(\frac{-|\beta_0|^2p^2}2\Big) + pc\Big(\frac{-|\beta_0|^2}2\Big)\Big) \\
& \qquad + p(1+2p+p^2) A(\beta) + p^2 p^s |\beta_0| \Big(p^{-1} c\Big(\frac{-|\beta_0|^2}{2p^2}\Big) - p c\Big(\frac{-|\beta_0|^2}2\Big)\Big) \\
=& \big(p^2 (\lambda_p^2-2) + p(1+2p+p^2)\big) A(\beta) \\
=& \big(p^2 \lambda_p^2 + p^3 + p\big) A(\beta).
\end{align*}
Here, we have used \eqref{lambda2-eq2} and $p^2 | \,|\beta_0|^2$. This completes the proof of the proposition.

\end{proof}

\section{The automorphic representation corresponding to the lifting}
In this section, we will use the Hecke equivariance from the previous section to determine the local components of the automorphic representation corresponding to the lifting. This will lead us to the conclusion that we have obtained a CAP representation and have found a couterexample of the Ramanujan conjecture.

\subsection{The local components of the automorphic representation}\label{localcomp-autorep}
Let $f \in S(\Gamma_0(2);-(\frac{1}{4}+\frac{r^2}{4}))$ be a  Hecke eigenform with Hecke eigenvalue $\lambda_p$ for every odd prime $p$. Let $F = F_f$ be as defined in Theorem \ref{lift-thm}. Let $\Phi_F : \cG(\A) \rightarrow \C$ be defined by 
\[
\Phi_F(\gamma g_{\infty}u_f)=F(g_{\infty})\quad\forall(\gamma,g_{\infty},u_f)\in\cG(\Q)\times\cG(\R)\times U.
\]
See Section \ref{adel-section} for details. Let $\pi_F$ be the irreducible cuspidal automorphic representation of $\cG(\A)$ generated by the right translates of $\Phi_F$. Note that the irreducibility follows from the strong multiplicity one result for $\cG(\A)$ (see \cite{Bad}, \cite{Bad-R}). The representation $\pi_F$ is cuspidal since $F$ is a cusp form. Let $\pi_F = \otimes'_p \pi_p$, where $\pi_p$ is an irreducible admissible representation of $\cG(\Q_p)$ for $p<\infty$ and $\pi_\infty$ is an irreducible admissible  representation of $\cG(\R)$. Recall $U = \prod_{p<\infty} K_p$ where $K_p$ is the maximal compact subgroup of $\cG_p$~(cf.~Section \ref{Autom-form}). Hence, for $p < \infty$, the representation $\pi_p$ is a spherical representation and can be realized as a subrepresentation of an unramified principal series representation, i.e. a representation induced from an unramified character of the Borel subgroup. The representation $\pi_p$ is completely determined by the action of the Hecke algebra $H(\cG_p, K_p)$ on the spherical vector in $\pi_p$, which in turn, is completely determined by the Hecke eigenvalues of $F$ obtained in the previous section. See \cite{Ca} for details. 
For $p=2$ we need to assume that $f$ is a new form for the determination of Hecke eigenvalue of $F_f$~(cf.~Section \ref{Hecke-equiv-even}). 

\subsubsection*{Description of $\pi_p$ for $p$ odd}
If $p$ is an odd prime, then we have $\cG_p = \GL_4(\Q_p)$ and $K_p = \GL_4(\Z_p)$. Given $4$ unramified characters $\chi_1, \chi_2, \chi_3, \chi_4$ of $\Q_p^\times$, we obtain a character $\chi$ of the Borel subgroup $P$ of upper triangular matrices in $\cG$, by
\begin{equation}\label{chi-p-odd}
\chi(\begin{bmatrix}a_1& \ast & \ast & \ast\\ & a_2 & \ast & \ast \\ && a_3 & \ast \\ &&&a_4 \end{bmatrix}) = \chi_1(a_1) \chi_2(a_2) \chi_3(a_3) \chi_4(a_4).
\end{equation}
The modulus character $\delta_P$ is given by
\begin{equation}\label{modulus-p-odd}
\delta_P(\begin{bmatrix}a_1& \ast & \ast & \ast\\ & a_2 & \ast & \ast \\ && a_3 & \ast \\ &&&a_4 \end{bmatrix}) = |a_1^3a_2a_3^{-1}a_4^{-3}|,
\end{equation}
where $|*|$ denotes the $p$-adic absolute value. 
The unramified principal representation corresponding to $\chi$ is given by $I(\chi)$ which consists of locally constant functions $f : \GL_4(\Q_p) \rightarrow \C$, satisfying
$$f(bg) = \delta_P(b)^{1/2} \chi(b) f(g), \text{ for all } b \in P, g \in \GL_4(\Z_p).$$
The action of the Hecke algebra is as follows. If $\phi \in H(\GL_4(\Q_p), \GL_4(\Z_p))$ and $f \in I(\chi)$, define
\begin{equation}\label{hecke-action-p-odd}
\big(\phi \ast f\big)(g) = \int\limits_{\GL_4(\Q_p)} \phi(h) f(gh) dh.
\end{equation}
Recall that we have normalized the measure $dh$ on $\GL_4(\Q_p)$ so that the volume of $\GL_4(\Z_p)$ is $1$. Let $f = f_0$, the unique vector in $I(\chi)$ that is right invariant under $\GL_4(\Z_p)$ and $f_0(1)=1$, and $\phi = \phi_h$ a characteristic function of $\GL_4(\Z_p) h \GL_4(\Z_p) = \sqcup_i h_i \GL_4(\Z_p)$. It follows from \eqref{hecke-action-p-odd} that
\begin{equation}\label{hecke-spherical-vector-p-odd}
\big(\phi_h \ast f_0\big)(1) = \sum\limits_i f_0(h_i) = \mu_h,
\end{equation}
where $\mu_h$ is determined by the representation $\pi_p$. The Hecke algebra $H(\GL_4(\Q_p), \GL_4(\Z_p))$ is generated by $\{\phi_1^{\pm 1}, \phi_2, \phi_3, \phi_4\}$, defined in \eqref{odd-hecke-ops}.
\begin{lem}\label{local-hecke-evals-p-odd}
Let $\mu_1, \mu_2, \mu_3, \mu_4$ be the constants obtained by the action of $\phi_i, i = 1,2,3,4$ on the spherical vector $f_0$ in $\pi_p$ according to \eqref{hecke-spherical-vector-p-odd}. Then $\mu_1 = 1, \mu_2 = \mu_4 = p(p+1) \lambda_p$ and $\mu_3 = p^2 \lambda_p^2 + p^3 + p$.
\end{lem}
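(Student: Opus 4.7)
The plan is to reduce this lemma to the global Hecke computations already established in Propositions~\ref{oddp-hecke-1-2-prop} and \ref{oddp-hecke-3-prop}. The bridge between the global and local pictures comes from the fact that, via Lemma~\ref{adelization}, the Maass form $F$ corresponds to the adelic form $\Phi_F$, and since $\pi_p$ is spherical and realized inside $\pi_F$, the local spherical vector $f_0 \in \pi_p$ is (up to scalar) the $p$-component of $\Phi_F$ under the tensor product decomposition $\pi_F = \otimes'_q \pi_q$. Consequently, a global identity $(K_p h K_p)\cdot F = \mu_h F$ translates, via the integral \eqref{hecke-action-p-odd}, into the local identity $\phi_h \ast f_0 = \mu_h f_0$, so that the scalar $\mu_h$ in \eqref{hecke-spherical-vector-p-odd} coincides with the eigenvalue computed globally.

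First I would dispose of $\mu_1$. The support of $\phi_1$ is $K_p \operatorname{diag}(p,p,p,p) K_p$, which reduces to the single coset $p \cdot 1_4 \cdot K_p$ since $p \cdot 1_4$ is central. Hence
\[
\big(\phi_1 \ast f_0\big)(1) = f_0(p \cdot 1_4) = \delta_P(p \cdot 1_4)^{1/2}\chi(p \cdot 1_4) f_0(1).
\]
But the central element $p\cdot 1_4 \in Z_{\A}$ acts trivially on $\Phi_F$ by the first condition in the definition of $M(\cG(\A), r)$, so the corresponding central character of $\pi_p$ is trivial at $p$; equivalently, $\Phi_F(g \cdot p \cdot 1_4) = \Phi_F(g)$, which forces $\mu_1 = 1$.

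For $\mu_2$ and $\mu_4$, I simply apply Proposition \ref{oddp-hecke-1-2-prop}, which gives
\[
(K_p \operatorname{diag}(p,p,p,1) K_p)\cdot F = (K_p \operatorname{diag}(p,1,1,1)K_p)\cdot F = p(p+1)\lambda_p F,
\]
and hence $\mu_2 = \mu_4 = p(p+1)\lambda_p$. For $\mu_3$, Proposition \ref{oddp-hecke-3-prop} gives
\[
(K_p \operatorname{diag}(p,p,1,1) K_p)\cdot F = (p^2\lambda_p^2 + p^3 + p) F,
\]
yielding $\mu_3 = p^2\lambda_p^2 + p^3 + p$.

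There is essentially no obstacle here beyond making the global-to-local translation precise, since the computationally hard work was already carried out in Section~\ref{Hecke-action}. The only subtlety worth flagging is the normalization: one must verify that the normalization of measures used in \eqref{hecke-action-p-odd} matches the one implicit in the definition of $K_p h K_p \cdot F$ in Section~\ref{Hecke-action} (both were normalized so that $K_p$ has volume one), and that the identification $\pi_p \subset I(\chi)$ indeed sends the spherical line to the vector $f_0$ normalized by $f_0(1) = 1$. Once these normalizations are in place, the three displayed formulas above, together with $\mu_1 = 1$, complete the proof.
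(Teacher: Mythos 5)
Your proposal is correct and follows essentially the same route as the paper: the paper's proof likewise observes that the local Hecke action on the spherical vector matches the classical Hecke action on $F$, notes that $\phi_1$ acts as the identity (giving $\mu_1=1$), and cites Propositions \ref{oddp-hecke-1-2-prop} and \ref{oddp-hecke-3-prop} for the remaining eigenvalues. Your additional remarks on the central character and the measure normalizations simply make explicit what the paper leaves implicit.
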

\begin{proof}
The lemma follows from the fact that the action of the $p$-adic Hecke algebra on the spherical vector in $\pi_p$ is exactly the same as the action of the $p$-part of the classical Hecke algebra on $F$. 
First note that $\phi_1$ acts as the identity operator, which implies $\mu_1=1$. 
The other Hecke eigenvalues follow from Propositions \ref{oddp-hecke-1-2-prop} and \ref{oddp-hecke-3-prop}.
\end{proof}

Recall that Proposition \ref{Gl4-double-cos-lem} gives the double coset decompositions which can be used to determine the action of $\phi$ on $f_0$. Let us abbreviate $\alpha_i = \chi_i(p)$ for $i = 1,2,3,4$. Working in the induced model $I(\chi)$ of $\pi_p$, we see that 
\begin{align}\label{hecke-action-alpha-p-odd}
\big(\phi_1 \ast f_0)(1) &= \alpha_1 \alpha_2 \alpha_3 \alpha_4, \\ \nonumber
\big(\phi_2 \ast f_0)(1) &= p^3 p^{-3/2} \alpha_1 \alpha_2 \alpha_3 + p p^{1/2} \alpha_1 \alpha_3 \alpha_4 + p^2 p^{-1/2} \alpha_1 \alpha_2 \alpha_4 + p^{3/2} \alpha_2 \alpha_3 \alpha_4 \\ \nonumber
&= p^{3/2} \alpha_1 \alpha_2 \alpha_3 \alpha_4 \Big(\alpha_1^{-1} + \alpha_2^{-1} + \alpha_3^{-1} + \alpha_4^{-1}\Big), \\ \nonumber
\big(\phi_4 \ast f_0)(1) &= p^3 p^{-3/2} \alpha_1 + p p^{1/2} \alpha_3 + p^2 p^{-1/2} \alpha_2 + p^{3/2} \alpha_4 \\ \nonumber
&= p^{3/2} \Big( \alpha_1 + \alpha_2 + \alpha_3 + \alpha_4\Big), \\ \nonumber
\big(\phi_3 \ast f_0)(1) &= p^4 p^{-2} \alpha_1 \alpha_2 + p^2 \alpha_2 \alpha_3 + p p \alpha_2 \alpha_4 + p^3 p^{-1} \alpha_1 \alpha_3 + p^2 \alpha_1 \alpha_4 + p^2 \alpha_3 \alpha_4 \\ \nonumber
&= p^2 \Big(\alpha_1 \alpha_2 + \alpha_2 \alpha_3 + \alpha_2 \alpha_4 + \alpha_1 \alpha_3 + \alpha_1 \alpha_4 + \alpha_3 \alpha_4 \Big). 
\end{align}

\begin{prop}\label{local-rep-p-odd-prop}
Let $f \in S(\Gamma_0(2);-(\frac{1}{4}+\frac{r^2}{4}))$ be a  Hecke eigenform with Hecke eigenvalue $\lambda_p$ for every odd prime $p$. Let $F = F_f$ be as defined in Theorem \ref{lift-thm}. Let $\pi_F = \otimes'_p \pi_p$ be the corresponding irreducible cuspidal automorphic representation of $\cG(\A)$. For an odd prime $p$, the representation $\pi_p$ is the unique spherical constituent of the unramified principal series representation $I(\chi)$ where, up to the action of the Weyl group of $\GL_4$, the character $\chi$ is given by 
\begin{align}\label{alpha-values-p-odd}
\chi_1(p) = p^{1/2} \frac{\lambda_p+\sqrt{\lambda_p^2-4}}2, & \quad \chi_2(p) = p^{1/2} \frac{\lambda_p-\sqrt{\lambda_p^2-4}}2, \\ \nonumber
 \chi_3(p) = p^{-1/2} \frac{\lambda_p+\sqrt{\lambda_p^2-4}}2, & \quad \chi_4(p) = p^{-1/2} \frac{\lambda_p-\sqrt{\lambda_p^2-4}}2.  
 \end{align}
\end{prop}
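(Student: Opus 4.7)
The plan is to identify $\pi_p$ via its Satake parameters, since $\pi_p$ is spherical (being a local component of an automorphic representation with a $K_p$-fixed vector) and hence is the unique spherical constituent of $I(\chi)$ for some unramified character $\chi$ of the Borel, determined up to the Weyl group action by the Hecke eigenvalues on the spherical vector. So the task reduces to matching the eigenvalues $\mu_1,\mu_2,\mu_3,\mu_4$ from Lemma~\ref{local-hecke-evals-p-odd} against the explicit formulas in \eqref{hecke-action-alpha-p-odd} and solving for $\alpha_i:=\chi_i(p)$.

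Setting $\alpha_i=\chi_i(p)$, the formulas \eqref{hecke-action-alpha-p-odd} together with Lemma~\ref{local-hecke-evals-p-odd} yield
\[
e_1(\alpha):=\alpha_1+\alpha_2+\alpha_3+\alpha_4 = (p^{1/2}+p^{-1/2})\lambda_p,\quad e_4(\alpha):=\alpha_1\alpha_2\alpha_3\alpha_4=1,
\]
and
\[
e_2(\alpha):=\sum_{i<j}\alpha_i\alpha_j=\lambda_p^2+p+p^{-1},
\]
where $e_4=\mu_1$, $p^{3/2}e_1=\mu_2$, and $p^2 e_2=\mu_3$ (the identity $\mu_2=\mu_4$ is consistent with $e_4=1$, giving the same relation for $e_3/e_4$ as for $e_1$). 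So $\pi_p$ is the spherical constituent of $I(\chi)$ for any $\chi$ whose Satake parameters realize these three symmetric invariants.

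It then only remains to verify that the four numbers $\alpha_i$ displayed in \eqref{alpha-values-p-odd} satisfy these three equations. Writing $\beta_{\pm}:=(\lambda_p\pm\sqrt{\lambda_p^2-4})/2$, one has $\beta_+\beta_-=1$ and $\beta_++\beta_-=\lambda_p$, so with $\{\alpha_1,\alpha_2,\alpha_3,\alpha_4\}=\{p^{1/2}\beta_+,p^{1/2}\beta_-,p^{-1/2}\beta_+,p^{-1/2}\beta_-\}$ the product is $\beta_+^2\beta_-^2=1$, the sum is $(p^{1/2}+p^{-1/2})(\beta_++\beta_-)=(p^{1/2}+p^{-1/2})\lambda_p$, and a short computation of the six pairwise products gives $p+p^{-1}+\beta_+^2+\beta_-^2+2\beta_+\beta_-=p+p^{-1}+(\beta_++\beta_-)^2=p+p^{-1}+\lambda_p^2$, as required.

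There is essentially no substantive obstacle: the three elementary symmetric functions determine the multiset $\{\alpha_1,\alpha_2,\alpha_3,\alpha_4\}$ up to the fourth, which is then forced by $e_4=1$ and the remaining identity $\mu_4=\mu_2$. Hence the Satake parameters are uniquely determined up to the Weyl group action, and the proposition follows. The only mildly delicate point is bookkeeping the normalization by $\delta_P^{1/2}$ in the induced model, which is already absorbed in the powers of $p$ appearing in \eqref{hecke-action-alpha-p-odd}.
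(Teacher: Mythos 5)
Your proposal is correct and follows essentially the same route as the paper: both identify $\pi_p$ through the Hecke eigenvalues of the spherical vector via Lemma \ref{local-hecke-evals-p-odd} and \eqref{hecke-action-alpha-p-odd}, the paper by direct substitution of \eqref{alpha-values-p-odd} and you by the equivalent (and slightly cleaner) bookkeeping with elementary symmetric functions, which also makes the uniqueness up to the Weyl group explicit — this is exactly the ``tedious computation'' the paper alludes to in the remark following the proposition. (Only a cosmetic slip: $\mu_2$ corresponds to $p^{3/2}e_3$ and $\mu_4$ to $p^{3/2}e_1$, but since $e_4=1$ forces $e_1=e_3$ here, nothing is affected.)
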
 
\begin{proof}
The representation $I(\chi)$ corresponding to $\pi_p$ is generated by the spherical vector $f_0$ and hence, it is completely determined by the action of the generators of the Hecke algebra on $f_0$. The representation $\pi_p$ is also determined by the Hecke eigenvalues of $F$ under the $p$-part of the classical Hecke algebra. Substituting the values of $\alpha_i = \chi_i(p), i =1,2,3,4$ from \eqref{alpha-values-p-odd}, into \eqref{hecke-action-alpha-p-odd} shows that we get the exact same eigenvalues as in Lemma \ref{local-hecke-evals-p-odd}. This completes the proof of the proposition.
\end{proof}

Let us remark here that we can use Lemma \ref{local-hecke-evals-p-odd} to directly solve for $\alpha_i$ from \eqref{hecke-action-alpha-p-odd}. It is a tedious computation but results in the same answer as in the statement of the above proposition. 

\subsubsection*{Description of $\pi_2$}
Recall that $B_2 = B \otimes_\Q \Q_2$, where $B$ is a definite quaternion algebra over $\Q$ with discriminant $2$ and $\mO_2$ is the completion of the Hurwitz order $\mO$ at $2$. In this case $\cG_2 = \GL_2(B_2)$ and $K_2 = \GL_2(\mO_2)$. Given two unramified characters $\chi_1, \chi_2$ of $B_2^\times$, we obtain a character $\chi$ of the Borel subgroup of upper triangular matrices on $\cG$ by
$$\chi(\mat{\alpha}{\ast}{0}{\beta}) = \chi_1(\alpha) \chi_2(\beta).$$
The modulus character is given by 
$$\delta(\mat{\alpha}{\ast}{0}{\beta}) = |\alpha/\beta|^2.$$
Here, $| \, |$ is the $2$-adic absolute value of the reduced norm of $B_2$. The unramified principal series representation corresponding to $\chi$ is given by $I(\chi)$ which consists of locally constant functions $f : \cG_2 \rightarrow \C$, satisfying
$$f(bg) = \delta(b)^{1/2} \chi(b) f(g), \text{ for all } b \in \text{ Borel subgroup, }  g \in \cG_2.$$
The action of the Hecke algebra is as follows. If $\phi \in H(\cG_2,K_2)$ and $f \in I(\chi)$, define
\begin{equation}\label{hecke-action-p-2}
\big(\phi \ast f\big)(g) = \int\limits_{\cG_2} \phi(h) f(gh) dh.
\end{equation}
Recall that we have normalized the measure $dh$ on $\cG_2$ so that the volume of $K_2$ is $1$. Let $f = f_0$, the unique vector in $I(\chi)$ that is right invariant under $K_2$ and $f_0(1) = 1$, and $\phi = \phi_h$ a characteristic function of $K_2 h K_2 = \sqcup_i h_i K_2$. It follows from \eqref{hecke-action-p-odd} that
\begin{equation}\label{hecke-spherical-vector-p-2}
\big(\phi_h \ast f_0\big)(1) = \sum\limits_i f_0(h_i) = \mu_h,
\end{equation}
where $\mu_h$ is determined by the representation $\pi_2$. The Hecke algebra $H(\cG_2, K_2)$ is generated by $\{\varphi_1^{\pm 1}, \varphi_2\}$, where $\varphi_1,~\varphi_2$ denote the characteristic functions for
\[
K_2
\begin{bmatrix}
\varpi_2 & 0\\
0 & \varpi_2
\end{bmatrix}K_2, \quad K_2
\begin{bmatrix}
\varpi_2 & 0\\
0 & 1
\end{bmatrix} K_2
\]
Here $\varpi_2$ is a uniformizer for $B_2$.
\begin{lem}\label{local-hecke-evals-p-2}
Let $\mu_1, \mu_2$ be the constants obtained by the action of $\phi_i, i = 1,2$ on the spherical vector $f_0$ in $\pi_2$ according to \eqref{hecke-spherical-vector-p-2}. Then $\mu_1 = 1$ and $\mu_2 = -3\sqrt{2} \epsilon$, where $\epsilon$ is the Atkin Lehner eigenvalue of $f$.
\end{lem}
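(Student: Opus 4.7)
The proof plan mirrors that of Lemma \ref{local-hecke-evals-p-odd}: we exploit the fact that the action of the $2$-adic Hecke algebra on the spherical vector $f_0 \in \pi_2$ matches, term for term, the action of the corresponding classical Hecke operator on $F$ under the adelization of Lemma \ref{adelization}. Once this matching is set up, the lemma reduces to invoking two results already proved, one for each generator.

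First I would handle $\mu_1$. The element $\mat{\varpi_2}{0}{0}{\varpi_2}$ lies in the center of $\cG_2$, so $K_2 \mat{\varpi_2}{0}{0}{\varpi_2} K_2$ is the single left coset $\mat{\varpi_2}{0}{0}{\varpi_2} K_2$. Condition~1 in the definition of $M(\cG(\A),r)$ forces $\Phi_F$ to be invariant under $Z_{\A}$, hence the central character of $\pi_F$, and in particular of $\pi_2$, is trivial. Since $\delta$ also vanishes on the center, evaluating \eqref{hecke-spherical-vector-p-2} gives $\mu_1 = \chi_1(\varpi_2)\chi_2(\varpi_2) = 1$.

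Next I would treat $\mu_2$. Proposition \ref{p=2-hecke-equiv-prop} yields the classical identity
\[
(K_2 \mat{\varpi_2}{0}{0}{1} K_2)\cdot F \;=\; -3\sqrt{2}\,\epsilon\, F.
\]
By Lemma \ref{adelization} this translates to $\varphi_2 \ast \Phi_F = -3\sqrt{2}\,\epsilon\, \Phi_F$ on $\cG(\A)$. Restricting to the local factor at $p=2$ and evaluating at the identity, the left-hand side equals $(\varphi_2 \ast f_0)(1) = \mu_2$ by \eqref{hecke-spherical-vector-p-2}, while the right-hand side equals $-3\sqrt{2}\,\epsilon\,f_0(1) = -3\sqrt{2}\,\epsilon$. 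Hence $\mu_2 = -3\sqrt{2}\,\epsilon$.

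The only point requiring real attention, and hence the main (mild) obstacle, is verifying the compatibility of normalizations: the Haar measure on $\cG_2$ is chosen so that $K_2$ has volume $1$, and one must check that the single coset decomposition of $K_2 \mat{\varpi_2}{0}{0}{1} K_2$ recorded in part 1 of Lemma \ref{Gl4-double-cos-lem} agrees with the representatives implicit in Proposition \ref{hecke-fourier-prop}. Once this bookkeeping is done, the translation from the classical Hecke equivariance to the adelic statement is purely formal, and Propositions \ref{p=2-hecke-equiv-prop} and \ref{hecke-fourier-prop} deliver the stated eigenvalues directly.
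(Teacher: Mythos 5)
Your overall strategy is exactly the paper's: the paper's proof of this lemma is literally ``the proof is the same as in the case of an odd prime,'' i.e.\ one identifies the action of the local Hecke algebra on the spherical vector with the classical Hecke action on $F$ and then quotes Proposition \ref{p=2-hecke-equiv-prop} for $\varphi_2$. Your treatment of $\mu_2$ is correct and matches this, including the normalization check via Lemma \ref{Gl4-double-cos-lem}.

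There is, however, a genuine flaw in your argument for $\mu_1$. The element $\mat{\varpi_2}{0}{0}{\varpi_2}$ does \emph{not} lie in the center of $\cG_2=\GL_2(B_2)$: the center consists of $\mat{c}{0}{0}{c}$ with $c\in\Q_2^\times$, and $\varpi_2=1+i$ is not central in $B_2$. The double coset $K_2\mat{\varpi_2}{0}{0}{\varpi_2}K_2$ is still a single left coset, but for a different reason, namely that $\varpi_2\mO_2=\mO_2\varpi_2$, so $\mat{\varpi_2}{0}{0}{\varpi_2}$ normalizes $K_2$. More seriously, triviality of the central character only controls $\chi_1\chi_2$ on $\Q_2^\times$; since $\varpi_2^2=2i$ is a central element times a unit, this gives $(\chi_1(\varpi_2)\chi_2(\varpi_2))^2=1$, i.e.\ $\mu_1=\pm1$, and does not by itself rule out $\mu_1=-1$. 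To pin down the sign you should argue globally: $\varphi_1$ acts on $\Phi_F$ by right translation by $\mat{\varpi_2}{0}{0}{\varpi_2}$ placed at the place $2$; writing this adelic element as the global rational element $\mat{\varpi_2}{0}{0}{\varpi_2}\in\cG(\Q)$ times its inverse at all other places, and noting that $\varpi_2\in\mO_p^\times$ for every odd $p$ while the archimedean component lies in $Z^+K_\infty$, the invariance properties defining $M(\cG(\A),r)$ give $\varphi_1\ast\Phi_F=\Phi_F$, hence $\mu_1=1$. With this repair the proof is complete and coincides with the paper's.
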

\begin{proof}
The proof is the same as in the case of an odd prime.
\end{proof}

Recall that Proposition \ref{Gl4-double-cos-lem} gives the double coset decompositions which can be used to determine the action of $\phi$ on $f_0$. Let us abbreviate $\alpha_i = \chi_i(\varpi_2)$ for $i = 1, 2$. Working in the induced model $I(\chi)$ of $\pi_2$, we see that 
\begin{align}\label{hecke-action-alpha-p-2}
(\varphi_1 \ast f_0)(1) &= \alpha_1 \alpha_2, \\
(\varphi_2 \ast f_0)(1) &= 2(\alpha_1 + \alpha_2). \nonumber
\end{align}
\begin{prop}\label{local-rep-p-2-prop}
Let $f \in S(\Gamma_0(2);-(\frac{1}{4}+\frac{r^2}{4}))$ be a new form with Hecke eigenvalue $\lambda_p$ for $p=2$ and Atkin Lehner eigenvalue $\epsilon$, for which $\lambda_2=-\epsilon$ holds~(cf.~(\ref{lambda2-epsilon-reln})). Let $F = F_f$ be as defined in Theorem \ref{lift-thm}. Let $\pi_F = \otimes'_p \pi_p$ be the corresponding irreducible cuspidal automorphic representation of $\cG(\A)$. The representation $\pi_2$ is the unique spherical constituent of the unramified principal series representation $I(\chi)$ where, up to the action of the Weyl group, the character $\chi$ is given by 
\begin{equation}\label{alpha-values-p-2}
\chi_1(\varpi_2) = -\sqrt{2} \epsilon, \qquad  \chi_2(\varpi_2) = -1/\sqrt{2} \epsilon. 
 \end{equation}
\end{prop}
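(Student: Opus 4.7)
The plan is to mimic the proof of Proposition \ref{local-rep-p-odd-prop} in the simpler setting of $\GL_2(B_2)$. Since $\pi_2$ is a spherical representation of $\cG_2 = \GL_2(B_2)$, it embeds as the unique spherical constituent of some unramified principal series $I(\chi)$, and the inducing character $\chi$ is determined, up to the Weyl group action, by the scalars $\mu_1,\mu_2$ by which the Hecke generators $\varphi_1,\varphi_2$ act on the spherical vector $f_0$. These scalars are supplied by Lemma \ref{local-hecke-evals-p-2} as $\mu_1 = 1$ and $\mu_2 = -3\sqrt{2}\,\epsilon$, and the relation between $(\mu_1,\mu_2)$ and $(\alpha_1,\alpha_2) = (\chi_1(\varpi_2),\chi_2(\varpi_2))$ is exactly \eqref{hecke-action-alpha-p-2}.

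Thus the proposition reduces to solving the two equations
\[
\alpha_1\alpha_2 = 1, \qquad 2(\alpha_1 + \alpha_2) = -3\sqrt{2}\,\epsilon,
\]
or, equivalently, to verifying that the proposed values $\alpha_1 = -\sqrt{2}\,\epsilon$ and $\alpha_2 = -1/(\sqrt{2}\,\epsilon)$ satisfy them. The product gives $\epsilon^2 = 1$, and since $\epsilon \in \{\pm 1\}$ one has $1/\epsilon = \epsilon$, so
\[
\alpha_1+\alpha_2 = -\sqrt{2}\,\epsilon - \tfrac{\epsilon}{\sqrt{2}} = -\tfrac{3\epsilon}{\sqrt{2}},
\]
whence $2(\alpha_1+\alpha_2) = -3\sqrt{2}\,\epsilon$, as required. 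The pair $(\alpha_1,\alpha_2)$ is determined by $(\mu_1,\mu_2)$ only up to swapping, i.e.\ up to the Weyl group of $\GL_2(B_2)$, which is exactly what the statement allows.

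The substantive content sitting behind this short verification is everything we have already imported: the identification of the action of $H(\cG_2,K_2)$ on the spherical vector of $\pi_2$ with the action of the classical Hecke operators at $2$ on $F$ (via the adelization of Section \ref{adel-section}), the explicit single-coset decompositions of Lemma \ref{Gl4-double-cos-lem}(1) used to derive \eqref{hecke-action-alpha-p-2}, and the Hecke equivariance Proposition \ref{p=2-hecke-equiv-prop} feeding into Lemma \ref{local-hecke-evals-p-2}. The only potential obstacle is a purely book-keeping one, namely making sure the normalization $\delta^{1/2}\chi$ in the induced model and the normalization of the Haar measure ($\mathrm{vol}(K_2)=1$) are consistent with the formulas in \eqref{hecke-action-alpha-p-2}; once these conventions are fixed, the proof is a one-line algebraic check followed by the invocation of the standard uniqueness of the spherical constituent of an unramified principal series.
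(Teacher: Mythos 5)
Your proposal is correct and follows exactly the route the paper takes: the paper's proof of this proposition is literally ``The proof is the same as in the case of odd prime,'' i.e.\ one substitutes the proposed values of $\alpha_1,\alpha_2$ into \eqref{hecke-action-alpha-p-2} and checks that they reproduce the eigenvalues $\mu_1=1$, $\mu_2=-3\sqrt{2}\,\epsilon$ of Lemma \ref{local-hecke-evals-p-2}, the Weyl group accounting only for the swap of $\alpha_1$ and $\alpha_2$. Your verification $\alpha_1\alpha_2=\epsilon^2=1$ and $2(\alpha_1+\alpha_2)=-3\sqrt{2}\,\epsilon$ is exactly this computation.
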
 
\begin{proof}
The proof is the same as in the case of odd prime.
\end{proof}
\subsubsection*{Description of $\pi_{\infty}$}
Let us note that $F=F_f\in {\cal M}(\GL_2(\mO);r)$ implies that the archimedean component $\pi_{\infty}$ of $\pi_F$ is spherical. 
Namely, up to constant multiples, $\pi_{\infty}$ has a unique $K_{\infty}$-invariant vector, where we put $K_{\infty}:=K$ with $K$ as in \eqref{Iwasawa-decomp-alg}. 

We now introduce $M_{\infty}:=
\left\{\left.
\begin{pmatrix}
u_1 & 0\\
0 & u_2
\end{pmatrix}~\right|~u_1,u_2\in\Hl^1\right\}$, where see Section \ref{Lie-alg} for  $\Hl^1$. 
Let $P_{\infty}$ be the standard proper parabolic subgroup $\cG_{\infty}=GL_2(\Hl)$ given by 
\[
\left\{
\begin{pmatrix}
a & *\\
0 & d
\end{pmatrix}\in\cG_{\infty}\right\}.
\]
We have $P_{\infty}:=Z^+NAM_\infty$, where $Z^+,~N$ and $A$ are as in \eqref{Iwasawa-decomp-alg}. 
The group $Z^+AM_{\infty}$ is nothing but the Levi subgroup of $P_{\infty}$. We now note that the Langlands classification of real reductive groups~(cf.~\cite{La}) implies that $\pi_{\infty}$ has to be embedded into some principal series representation $I_{P_{\infty}}$ of $\cG_{\infty}$ induced from a quasi-character of $P_{\infty}$. 
Since $\pi_{\infty}$ is spherical $I_{P_{\infty}}$ is also spherical. Namely $I_{P_{\infty}}$ has a unique $K_{\infty}$-invariant vector, up to constant multiples. As $\pi_{\infty}$ has the trivial central character, so does $I_{P_{\infty}}$. These imply that the quasi-character of $P_{\infty}$ inducing $I_{P_{\infty}}$ has to be trivial on $Z^+M_{\infty}$. 
For $s\in\C$ we introduce the quasi-character $\chi_s$ of $P_{\infty}$ defined by
\[
\chi_s\left(
\begin{pmatrix}
a & *\\
0 & d
\end{pmatrix}\right)=\nu(ad^{-1})^s, 
\]
where recall that $\nu$ denotes the reduced norm of $\Hl$~(cf.~Section \ref{gps-hypsp}). 
For this we note that $\chi_s$ is trivial on $Z^+M_{\infty}$. 
We furthermore introduce the modulus character $\delta_{\infty}$ of $P_{\infty}$. 
The principal series representation $I_{P_{\infty}}$ is thus expressed as 
\[
I_{P_{\infty}}={\rm Ind}_{P_{\infty}}^{\cG_{\infty}}(\delta_{\infty}\chi_{s}).
\]
\begin{prop}\label{local-rep-infty-prop}
We have an isomorphism
\[
\pi_{\infty}\simeq{\rm Ind}_{P_{\infty}}^{\cG_{\infty}}(\delta_{\infty}\chi_{\pm\sqrt{-1}r})
\]
as $(\g, K_{\infty})$-modules, where recall that $\g$ denotes the Lie algebra of $\cG_{\infty}$~(cf.~Section \ref{Lie-alg}).
\end{prop}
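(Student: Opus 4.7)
The strategy is to pin down $\pi_\infty$ by three pieces of data: it is spherical (because $F_f$ is right $K_\infty$-invariant), its central character is trivial, and its Casimir eigenvalue equals $-(r^2/4+1)$. As indicated in the set-up preceding the statement, the first two points together with the Langlands subquotient theorem already give an embedding $\pi_\infty \hookrightarrow I_{P_\infty}(\delta_\infty\chi_s)$ for some $s\in\C$, determined up to the Weyl action $s\mapsto -s$. What remains is to compute the Casimir eigenvalue on $I_{P_\infty}(\delta_\infty \chi_s)$ as a function of $s$, and then to upgrade the inclusion to an isomorphism.

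For the Casimir calculation I would work in the induced model. The spherical vector restricted to $NA$ equals $\delta_\infty(a_y)\chi_s(a_y)=y^2\cdot y^s=y^{s+2}$, using $\delta_\infty(\diag(a,b))=|\nu(a/b)|^2$ and $\chi_s(a_y)=y^s$. The Iwasawa identities $a_y\exp(tH)=a_{ye^{2t}}$ and $a_y\exp(tE_{2\alpha}^{(z)})=n(ytz)a_y$ show that $H$ acts on $y^{s+2}$ by multiplication by $2(s+2)$, while each $E_{2\alpha}^{(z)}$ annihilates it (the function has no $x$-dependence). Plugging into the explicit formula \eqref{Casimir-defn} yields
\[
\Omega\cdot y^{s+2}\;=\;\tfrac{(s+2)^2}{4}\,y^{s+2}-(s+2)\,y^{s+2}\;=\;\tfrac{s^2-4}{4}\,y^{s+2}.
\]
Comparing this with the eigenvalue $-(r^2/4+1)$ inherited from $F_f$ through the identification of Lemma \ref{adelization} gives $s^2=-r^2$, hence $s=\pm\sqrt{-1}\,r$; the two signs yield the same induced module under the Weyl group action.

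The remaining point, and the one I expect to be the main obstacle, is promoting the inclusion $\pi_\infty\hookrightarrow I_{P_\infty}(\delta_\infty\chi_{\pm\sqrt{-1}r})$ to a genuine $(\g,K_\infty)$-isomorphism. Since the Selberg conjecture for $\Gamma_0(2)$ forces $r\in\R$, the parameter $\pm\sqrt{-1}\,r$ is purely imaginary and $\delta_\infty\chi_{\pm\sqrt{-1}r}$ is a unitary character; the induced module is then the unitary spherical principal series of the real rank-one group $\GL_2(\Hl)$, whose irreducibility is standard outside a low-dimensional locus that does not affect our parameter. Apart from invoking this irreducibility input, the proof is mechanical; the only genuine care needed is in fixing the conventions for the modulus character $\delta_\infty$ and the (non-unitary) normalization of parabolic induction used in the statement of the proposition.
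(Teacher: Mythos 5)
Your proposal follows essentially the same route as the paper: embed $\pi_{\infty}$ into a spherical principal series via the Langlands classification, match the Casimir eigenvalue $\Omega\cdot v=(\tfrac{s^{2}}{4}-1)v$ against $-(\tfrac{r^{2}}{4}+1)$ to force $s=\pm\sqrt{-1}\,r$, and then upgrade the embedding to an isomorphism by invoking irreducibility of the unitary spherical principal series (the paper cites Harish-Chandra and Collingwood for the rank-one group $\SL_2(\Hl)\simeq{\rm Spin}(5,1)$). The only difference is that you carry out the Casimir computation on the spherical vector explicitly, which the paper merely asserts; your caveat about fixing the normalization of $\delta_{\infty}$ and $\chi_{s}$ is well placed, since the exponent of the spherical vector must come out as $y^{2\pm s}$ for the asserted eigenvalue to hold.
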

\begin{proof}
Let $v$ be a unique $K_{\infty}$-invariant vector in the representation space of ${\rm Ind}_{P_{\infty}}^{\cG_{\infty}}(\delta_{\infty}\chi_{s})$, which $\pi_{\infty}$ can be embedded into. Then $v$ can be also regarded as a vector of $\pi_{\infty}$. 
We remark that $\pi_{\infty}$ can be viewed as a representation of $SL_2(\Hl)\simeq GL_2(\Hl)/Z^+$~(cf.~Section \ref{Lie-alg}) since it  has the trivial central character. 
Consider the infinitesimal action of the Casimir operator $\Omega$~(cf.~\eqref{Casimir-defn}) on $v$. We then have
\[
\Omega\cdot v=\left(\frac{s^2}{4}-1\right)v=\left(-\frac{r^2}{4}-1\right)v,
\]
which leads to $s=\pm\sqrt{-1}r$. Now recall that we have assumed $r\in\R$~(cf.~Section \ref{Autom-form}). We thus know that the quasi-character $\chi_s$ is parametrized by a purely imaginary number $\pm\sqrt{-1}r$. By Harish-Chandra \cite[Section 41,~Theorem 1]{Ha} the spherical principal series representation ${\rm Ind}_{P_{\infty}}^{\cG_{\infty}}(\delta_{\infty}\chi_{\pm\sqrt{-1} r})$ is an irreducible unitary representation. 
For this see also \cite[Remark (2.1.13)]{Co} and note the accidental isomorphism $Spin(5,1)\simeq SL_2(\Hl)$ as real Lie groups. 
Consequently we have the isomorphism in the assertion.
\end{proof}
\subsection{CAP representations}
Let us first give the definition of CAP representations. 
\begin{defn}\label{CAP-def}
Let $G_1$ and $G_2$ be two reductive algebraic groups over a number field such that $G_{1,v} \simeq G_{2,v}$ for almost all places $v$. Let $P_2$ be a parabolic subgroup of $G_2$ with Levi decomposition $P_2 = M_2 N_2$. An irreducible cuspidal automorphic representation $\pi = \otimes'_v \pi_v$ of $G_1(\A)$ is called {\it cuspidal associated to parabolic} (CAP) $P_2$, if there exists an irreducible cuspidal automorphic representation $\sigma$ of $M_2$ such that $\pi_v \simeq \pi_v'$ for almost all places $v$, where $\pi' = \otimes'_v \pi_v'$ is an irreducible component of ${\rm Ind}_{P_2(\A)}^{G_2(\A)}(\sigma)$.
\end{defn}

See \cite{G1} and \cite{P} for details on CAP representations defined for two groups instead of just one. Take $G_1 = \cG = \GL_2(B)$ and $G_2 = \GL_4$. Here $B$ is a definite quaternion algebra with discriminant $2$. Since these groups are inner forms of each other, we have $G_{1,p} \simeq G_{2,p}$ for all odd primes $p$. Let $P_2$ be the standard parabolic of $\GL_4$ with Levi subgroup $M_2 = \GL_2 \times \GL_2$. Let $f \in S(\Gamma_0(2);-(\frac{1}{4}+\frac{r^2}{4}))$ be a  Hecke eigenform with Hecke eigenvalue $\lambda_p$ for every odd prime $p$ and Atkin Lehner eigenvalue $\epsilon$. Let $\sigma = \otimes'_p \sigma_p$ be the irreducible cuspidal automorphic representation of $\GL_2$ corresponding to $f$. For an odd prime $p$, the representation $\sigma_p$ is the spherical principal series representation $I(\eta)$, where $\eta$ is given by 
$$\eta(\mat{a}{b}{}{d}) = \eta_0(a) \eta_0^{-1}(d).$$
Here, $\eta_0$ is an unramified character of $\Q_p^\times$ such that $\eta_0(p) + \eta_0^{-1}(p) = \lambda_p$. For $p=2$ assume that $f$ is a new form. Then the representation $\sigma_2$ is the twist of the Steinberg representation of $\GL_2(\Q_2)$ by an unramified character $\eta'$, with $\eta'(2) = -\epsilon$. The representation $\sigma$ gives a representation $|{\rm det}|^{-1/2}\sigma \times |{\rm det}|^{1/2}\sigma$ of $M_2$. We have the following theorem.
\begin{thm}\label{CAP-thm}
Let $f \in S(\Gamma_0(2);-(\frac{1}{4}+\frac{r^2}{4}))$ be a  Hecke eigenform with Hecke eigenvalue $\lambda_p$ for every odd prime $p$ and Atkin Lehner eigenvalue $\epsilon$. Let $\sigma = \otimes'_p \sigma_p$ be the irreducible cuspidal automorphic representation of $\GL_2$ corresponding to $f$. Let $F = F_f$ be as defined in Theorem \ref{lift-thm}. Let $\pi_F = \otimes'_p \pi_p$ be the corresponding irreducible cuspidal automorphic representation of $\cG(\A)$. Then $\pi_F$ is CAP to an irreducible component of ${\rm Ind}_{P_2(\A)}^{G_2(\A)}(|{\rm det}|^{-1/2}\sigma \times |{\rm det}|^{1/2}\sigma)$.
\end{thm}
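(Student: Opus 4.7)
The plan is to verify the CAP condition by matching local parameters at almost all places, specifically at every odd prime $p$, where $G_{1,p} = \GL_2(B_p) \simeq \GL_4(\Q_p) = G_{2,p}$ and both $\pi_p$ and $\pi'_p$ are unramified/spherical.

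First, I would identify the inducing data explicitly. At an odd prime $p$, the local component $\sigma_p$ is the spherical principal series $I(\eta)$ induced from the character $\mathrm{diag}(a,d)\mapsto \eta_0(a)\eta_0^{-1}(d)$ of the Borel subgroup of $\GL_2(\Q_p)$, where $\eta_0$ is an unramified character satisfying $\eta_0(p)+\eta_0^{-1}(p)=\lambda_p$. Setting $\alpha=\eta_0(p)$ and $\beta=\eta_0^{-1}(p)$, we have $\alpha\beta=1$ and $\alpha+\beta=\lambda_p$, which forces
\[
\{\alpha,\beta\} = \Big\{\tfrac{\lambda_p+\sqrt{\lambda_p^2-4}}{2},\ \tfrac{\lambda_p-\sqrt{\lambda_p^2-4}}{2}\Big\}.
\]

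Next, I would compute the Satake parameters of $\mathrm{Ind}_{P_2(\Q_p)}^{\GL_4(\Q_p)}(|\det|^{-1/2}\sigma_p\times|\det|^{1/2}\sigma_p)$. By transitivity of parabolic induction and the observation that $|\det|^{\pm 1/2}\sigma_p$ is induced from characters on $\GL_2(\Q_p)$ whose values at $p$ are $p^{\mp 1/2}\alpha$ and $p^{\mp 1/2}\beta$, the full induction from $P_2$ to $\GL_4$ is equivalent to the induction from the Borel of $\GL_4$ by a character with values at $p$ given by
\[
(p^{1/2}\alpha,\ p^{1/2}\beta,\ p^{-1/2}\alpha,\ p^{-1/2}\beta).
\]
These are precisely, up to Weyl group action, the Satake parameters $\chi_1(p),\chi_2(p),\chi_3(p),\chi_4(p)$ of $\pi_p$ determined in Proposition \ref{local-rep-p-odd-prop}.

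Since an unramified principal series of $\GL_4(\Q_p)$ has a unique spherical constituent determined up to Weyl equivalence by its Satake parameters, this matching of parameters identifies $\pi_p$ with the spherical constituent $\pi'_p$ of the induced representation for every odd prime $p$. Taking $\pi'$ to be the irreducible component of $\mathrm{Ind}_{P_2(\A)}^{G_2(\A)}(|\det|^{-1/2}\sigma\times|\det|^{1/2}\sigma)$ whose local factor at every odd prime is this spherical constituent, we obtain $\pi_{F,p}\simeq \pi'_p$ for all $p$ odd, which verifies the CAP condition. No serious obstacle is expected since the substantive work (the explicit Hecke eigenvalue computation and the resulting parametrization of $\pi_p$) has already been carried out in Section \ref{localcomp-autorep}; the remaining step is the bookkeeping comparison of Satake parameters described above.
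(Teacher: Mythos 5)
Your proposal is correct and follows essentially the same route as the paper: the paper's proof also rests on the single observation that, by transitivity of parabolic induction, ${\rm Ind}_{P_2(\Q_p)}^{G_2(\Q_p)}(|{\rm det}|_p^{-1/2}\sigma_p \times |{\rm det}|_p^{1/2}\sigma_p)\simeq I(\chi_p)$ with $\chi_p$ exactly as in Proposition \ref{local-rep-p-odd-prop}, so that the spherical constituents agree at every odd prime. The only cosmetic difference is that the paper exhibits a concrete intertwining map $f\mapsto (f(g))(I_2,I_2)$ and records $\delta_{P_2}$, whereas you phrase the same identification through the matching of Satake parameters $(p^{1/2}\alpha,p^{1/2}\beta,p^{-1/2}\alpha,p^{-1/2}\beta)$.
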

\begin{proof}
The theorem follows from the observation that, for an odd prime $p$, we have the isomorphism ${\rm Ind}_{P_2(\Q_p)}^{G_2(\Q_p)}(|{\rm det}|_p^{-1/2}\sigma_p \times |{\rm det}|_p^{1/2}\sigma_p) \simeq I(\chi_p)$. Here, $I(\chi_p)$ is the representation described in Proposition \ref{local-rep-p-odd-prop}. A concrete map is given as follows. For $f \in {\rm Ind}_{P_2(\Q_p)}^{G_2(\Q_p)}(|{\rm det}|_p^{-1/2}\sigma_p \times |{\rm det}|_p^{1/2}|\sigma_p)$ define the function $g \mapsto (f(g))(I_2, I_2)$. Note that $\delta_{P_2}({\rm diag}(a_1, a_2, a_3, a_4)) = |a_1a_2a_3^{-1}a_4^{-1}|^2$. 
\end{proof}
We can furthermore show that our cuspidal representations $\pi_F$'s provide counterexamples of the Ramanujan conjecture. 
\begin{thm}\label{Counter-eg-RC}
Let $\pi_F=\otimes'_p\pi_p$ be as in Theorem \ref{CAP-thm}.  For every odd prime $p$~(respectively~$p=\infty$), $\pi_p$ is non-tempered~(respectively~tempered). 
If we further assume that $f$ is a new form, $\pi_p$ is non-tempered for every finite prime $p$ and tempered for $p=\infty$.
\end{thm}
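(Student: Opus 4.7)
The plan is to read off the inducing parameters of each local component $\pi_p$ of $\pi_F$ from Propositions~\ref{local-rep-p-odd-prop}, \ref{local-rep-p-2-prop} and \ref{local-rep-infty-prop}, and then apply the standard Satake/Langlands criterion: a spherical (or minimal-principal-series) representation is tempered if and only if its parameters all lie on the unit circle. Since the local components have already been identified in Section~\ref{localcomp-autorep}, nothing beyond a direct size check of these parameters should be required.

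At the archimedean place, Proposition~\ref{local-rep-infty-prop} gives
\[
\pi_\infty \simeq \mathrm{Ind}_{P_\infty}^{\cG_\infty}(\delta_\infty \chi_{\pm\sqrt{-1}r}),
\]
where $P_\infty = Z^+ N A M_\infty$ is a minimal parabolic of $\GL_2(\Hl)$. Recall from Section~\ref{Autom-form} that $r$ can be taken to be real, since the Selberg eigenvalue bound is known for $\Gamma_0(2)$. Consequently the exponent $\pm\sqrt{-1}r$ is purely imaginary, so $\chi_{\pm\sqrt{-1}r}$ is a unitary character of the Levi subgroup. By Harish-Chandra's classification, a minimal principal series induced from a unitary character is tempered; thus $\pi_\infty$ is tempered.

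For an odd prime $p$, Proposition~\ref{local-rep-p-odd-prop} expresses $\pi_p$ as the spherical constituent of the unramified principal series $I(\chi)$ whose Satake parameters are
\[
\chi_1(p) = p^{1/2}\alpha_p,\quad \chi_2(p) = p^{1/2}\alpha_p^{-1},\quad \chi_3(p) = p^{-1/2}\alpha_p,\quad \chi_4(p) = p^{-1/2}\alpha_p^{-1},
\]
with $\alpha_p = (\lambda_p + \sqrt{\lambda_p^2-4})/2$. In particular $|\chi_1(p)|\cdot|\chi_2(p)| = p > 1$, so at least one of these absolute values exceeds $1$; hence not all four Satake parameters can lie on the unit circle, and $\pi_p$ fails to be tempered. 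For the even prime, under the new-form assumption, Proposition~\ref{local-rep-p-2-prop} identifies $\pi_2$ as the spherical constituent of $I(\chi)$ on $\GL_2(B_2)$ with parameters $\chi_1(\varpi_2) = -\sqrt{2}\epsilon$ and $\chi_2(\varpi_2) = -\epsilon/\sqrt{2}$, whose absolute values $\sqrt{2}$ and $1/\sqrt{2}$ are both different from $1$. The same temperedness criterion rules out temperedness of $\pi_2$.

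I do not expect a genuine obstacle here: the heavy lifting was already carried out in Section~\ref{localcomp-autorep}, where the unramified data of each $\pi_p$ was computed from the Hecke eigenvalues of $F_f$. The only small point worth verifying carefully is the temperedness criterion in the slightly non-standard setting of $\GL_2(B_2)$, but this follows from the usual Macdonald/Satake theory for the inner form exactly as in the split case, with $|\cdot|$ taken to be the normalized absolute value of the reduced norm on $B_2$.
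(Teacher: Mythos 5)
Your treatment of the archimedean place and of the odd primes coincides with the paper's own proof: temperedness of $\pi_\infty$ follows from Proposition \ref{local-rep-infty-prop} and the unitarity of $\chi_{\pm\sqrt{-1}r}$ (the paper cites \cite[Remark 2.1.13]{Co}), and at odd $p$ the paper likewise concludes non-temperedness directly from the non-unitarity of the characters in \eqref{alpha-values-p-odd}, citing \cite{Sk} for the criterion on $\GL_4(\Q_p)$. Your observation that $|\chi_1(p)\chi_2(p)|=p>1$ forces some parameter off the unit circle is a clean way to phrase that step.

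The gap is at $p=2$, which is exactly the place you flag as ``the only small point worth verifying carefully'' and then dismiss. There the group is $\GL_2(B_2)$ rather than $\GL_4(\Q_2)$, so \cite{Sk} does not apply off the shelf, and --- more seriously --- the principal series $I(\chi)$ of Proposition \ref{local-rep-p-2-prop} is \emph{reducible}: by Tadi{\'c}'s results (see Remark \ref{local-rep-rem}) it has two composition factors, one of which is essentially square-integrable; since $\chi_1(\varpi_2)\chi_2(\varpi_2)=(-\sqrt{2}\epsilon)(-\epsilon/\sqrt{2})=1$, the central character is unitary and that factor is genuinely square-integrable modulo the center, hence \emph{tempered}. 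So this very induced representation contains a tempered constituent even though $|\chi_1(\varpi_2)|=\sqrt{2}\neq 1$: the slogan ``non-unitary inducing character implies non-tempered'' cannot be applied to an arbitrary constituent, and the claim that it applies to the spherical constituent ``exactly as in the split case'' is precisely the assertion that needs proof --- it is the $\GL_2(B_2)$-analogue of the fact that the trivial representation, not the Steinberg, is the spherical constituent of $I(|\cdot|^{1/2},|\cdot|^{-1/2})$ for $\GL_2(\Q_p)$. The paper closes this gap by a direct computation: using the single-coset decomposition of $K_2\,{\rm diag}(\varpi_2^n,1)\,K_2$ and the eigenvalue $(-\epsilon)^n(2^{3n/2}+2^{n/2})$ of the corresponding Hecke operator on the spherical vector $f_0$, it shows that
\[
\int_{\cG_2/Z_2}\bigl|\langle\pi_2(g)f_0,f_0'\rangle\bigr|^{2+\delta}\,dg
\]
diverges for every $\delta>0$, contradicting temperedness. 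Remark \ref{local-rep-rem} records the alternatives: Tadi{\'c}'s classification of the non-unitary dual of $\GL(n)$ over a division algebra, or Casselman's criterion applied to the exponents of the spherical constituent. Your conclusion is correct, but you must supply one of these arguments at the even place; as written, your reasoning would equally ``prove'' that the square-integrable constituent of the same $I(\chi)$ is non-tempered, which is false.
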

\begin{proof}
The temperedness of $\pi_{\infty}$ is due to Proposition \ref{local-rep-infty-prop} and \cite[Remark 2.1.13]{Co}. 
For an odd prime $p$, the unramified chacters $\chi_i$ with $1\le i\le 4$ are not unitary~(cf.~\eqref{alpha-values-p-odd}). This means that $\pi_p$ is non-tempered~(cf.~\cite{Sk}).

Let $p=2$ and suppose that $f$ is a new form. We recall that $f_0$ denotes the spherical vector in $\pi_2$, and introduce its dual vector $f'_0$ in the contragredient representation of $\pi_2$. 
With the invariant measure $dg$ of $\cG_p/Z_p$ normalized so that $\displaystyle\int_{K_2/Z_p}dg=1$, for any $\delta>0$, we consider the following integral of the matrix coefficient
\[
\displaystyle\int_{\cG_2/Z_2}|\langle\pi_2(g)f_0,f'_0\rangle|^{2+\delta}dg
\]
over $\cG_2$ modulo center $Z_2$, 
where $\langle*,*\rangle$ denotes the canonical paring of $\pi_2$ and its contradredient.  If $\pi_2$ is tempered, this integral should be convergent. 
Now we note that the set $(\bigsqcup_{n\ge 0}K_2
\begin{pmatrix}
\varpi_2^n & 0\\
0 & 1
\end{pmatrix}K_2)/Z_2$ can be regarded as a subdomain of $\cG_2/Z_2$ and that there is a decomposition
\[
K_2
\begin{pmatrix}
\varpi_2^n & 0\\
0 & 1
\end{pmatrix}K_2=
\underset{x\in\mO_2/\varpi_2^n\mO_2}{\sqcup}\begin{pmatrix}
\varpi_2^n & 0\\
0 & 1
\end{pmatrix}
\begin{pmatrix}
1 & \varpi_2^{-n}x\\
0 & 1
\end{pmatrix}K_p\sqcup
\begin{pmatrix}
1 & 0\\
0 & \varpi_2^n
\end{pmatrix}K_p.
\]
It is verified that the Hecke operator defined by $K_2
\begin{pmatrix}
\varpi_2^n & 0\\
0 & 1
\end{pmatrix}K_2$ acts on $f_0$ as follows:
\[
(K_2
\begin{pmatrix}
\varpi_2^n & 0\\
0 & 1
\end{pmatrix}K_2)\cdot f_0=(-\epsilon)^n(2^{3n/2}+2^{n/2})f_0.
\]
We thereby have a divergent integral
\[
\displaystyle\int_{(\bigsqcup_{n\ge 0}K_2
\begin{pmatrix}
\varpi_2^n & 0\\
0 & 1
\end{pmatrix}K_2)/Z_p}|\langle\pi_2(g)f_0,f'_0\rangle|^{2+\delta}dg=
(\displaystyle\sum_{n\ge 0}(2^{3n/2}+2^{n/2})^{2+\delta})|\langle f_0,f'_0\rangle|^{2+\delta}=\infty,
\]
which leads to a contradiction. We therefore see that $\pi_2$ is non-tempered. 
As a result we are done.
\end{proof}
\begin{rem}\label{local-rep-rem}
\begin{enumerate}
\item According to Tadi{\'c} \cite{Ta} the parabolic induction $I(\chi)$ for $p=2$~(cf.~Section \ref{localcomp-autorep}) has two composition factor, one of which is a unique essentially square integrable subquotient. Our non-tempered representation $\pi_2$ is the remaining non-square integrable composition factor. Besides our approach there seem several ways to prove that the non-square integrable composition factor is non-tempered. In fact, Marko Tadi{\'c} pointed out that the non-temperedness is proved by using the classification of the non-unitary dual of $GL(n)$ over a division algebra~(cf.~\cite{Ta}) or by Casselman's criterion on the temperedness of an  admissible representation.\\

\item From Wayl's law~(cf.~\cite[(11.5)]{Iwn}) we can deduce that there exist non-zero newforms in $S(\Gamma_0(2);-(\frac{r^2}{4}+\frac{1}{4}))$ for some $r\in\R$. Let $N_{\Gamma}(T)$ be the counting function of an orthogonal basis of the discrete spectrum for a congruence subgroup $\Gamma$ as in \cite[Section 11]{Iwn}. 
Put $N^*_{\Gamma_0(2)}(T)$ to be such counting function for newforms of $\Gamma_0(2)$. With the help of Casselman's local theory of oldforms and newforms~(cf.~\cite{Cas}) we deduce
\begin{align*}
N^*_{\Gamma_0(2)}(T) & =\frac{{\rm Vol}({\frak h}/\Gamma_0(2))-2{\rm Vol}({\frak h}/SL_2(\Z))}{4\pi}T^2+O(T\log T) \\
&=\frac{{\rm Vol}({\frak h}/SL_2(\Z))}{4\pi}T^2+O(T\log T)
\end{align*}
from Weyl's law just mentioned. This leads to the existence of a non-zero cuspidal representation $\pi_F$ whose local component $\pi_p$ is non-tempered at every $p<\infty$.
\end{enumerate}
\end{rem}

Masanori Muto\\
Kumamoto Prefectural Toryo High School\\
5-10, Komine 4-chome, Higashi-ku, Kumamoto 862-0933, Japan\\
{\it E-mail address}:~muto-m@mail.bears.ed.jp
\\[10pt]
Hiro-aki Narita\\
Graduate School of Science and Technology\\
Kumamoto University\\
Kurokami, Chuo-ku, Kumamoto 860-8555, Japan\\ 
{\it E-mail address}:~narita@sci.kumamoto-u.ac.jp
\\[10pt]
Ameya Pitale\\
Department of Mathematics\\
University of Oklahoma\\
Norman, Oklahoma, USA.\\ 
{\it E-mail address}:~apitale@ou.edu
\end{document}